\documentclass[12pt,reqno]{amsart}

\marginparwidth0.5cm

\textwidth155mm
\textheight210mm
%MINE \parindent0pt
\hoffset-15mm
\voffset-1mm
\usepackage{color}
\usepackage{amsmath, amsthm, amssymb}
\usepackage{amsfonts}
\usepackage[ansinew]{inputenc}
\usepackage[dvips]{epsfig}
\usepackage{graphicx}
\usepackage[english]{babel}
\usepackage{hyperref}

%MINE
%\usepackage{showkeys}

%%%%%%%%%%%%%%%%%%%%%%%
 \usepackage{mathrsfs}

\newcommand{\scrP }{\mathscr{P}}

%%%%%%%%%%%%%%%%%%%%%%%%%

%\usepackage{ae}

\usepackage{cite}
\usepackage{graphicx}
\usepackage{amscd}
\usepackage{color}
\usepackage{bm}
\usepackage{enumerate}

\usepackage{verbatim}
\usepackage{hyperref}
\usepackage{amstext}
\usepackage{latexsym}
%
%\makeatletter
%\def\@addpunct#1{%
%\relax\ifhmode
%\ifnum\spacefactor>\@m \else\fi
%\fi}
%\makeatother

\theoremstyle{plain}
\newtheorem{theorem}{Theorem}[section]

\newtheorem{corollary}[theorem]{Corollary}
\newtheorem{proposition}[theorem]{Proposition}
\newtheorem{lemma}[theorem]{Lemma}

\numberwithin{theorem}{section}
\numberwithin{equation}{section}

\newcommand{\average}{{\mathchoice {\kern1ex\vcenter{\hrule height.4pt
width 6pt depth0pt} \kern-9.7pt} {\kern1ex\vcenter{\hrule
height.4pt width 4.3pt depth0pt} \kern-7pt} {} {} }}

\def\R{\mathbb{R}}

%%%%%%%%%MINECO

%%%%%%%%%%%%%%%%%%%%%%%%%%%%%%%%%%%%%%%%%%%%%%%%%%%%%%%%%%%%%%%%%%%%%%%%%%%%%%

\renewcommand{\a }{\alpha }
\renewcommand{\b }{\beta }
\renewcommand{\d}{\delta }

\newcommand{\e }{\varepsilon }
\newcommand{\g }{\gamma}

\newcommand{\G }{\Gamma}
\renewcommand{\l }{\lambda }
\renewcommand{\L }{\Lambda }
\newcommand{\n }{\nabla }
\newcommand{\vp }{\varphi }

\newcommand{\s }{\sigma }

\renewcommand{\t }{\tau }
\newcommand{\z }{\zeta}
\renewcommand{\th }{\theta }

\newcommand{\ov}{\overline}

\newcommand{\be}{\begin{equation}}
\newcommand{\ee}{\end{equation}}

\newcommand{\de}{\partial}

\newcommand{\ti}{\widetilde}

\newcommand{\calO }{\mathcal{O}}

\newcommand{\N}{\mathbb{N}}

\newcommand{\Z}{\mathbb{Z}}

%\newcommand{\S}{\mathcal{S}}

% by T. Weth

%\newcommand{\m}{m}

\newcommand{\cB}{{\mathcal B}}

\newcommand{\cF}{{\mathcal F}}

\newcommand{\cH}{{\mathcal H}}

\newcommand{\cK}{{\mathcal K}}

\newcommand{\cN}{{\mathcal N}}
\newcommand{\cO}{{\mathcal O}}
\newcommand{\cP}{{\mathcal P}}

\newcommand{\cR}{{\mathcal R}}

\newcommand{\eps}{\varepsilon}

\renewcommand{\phi}{\varphi}

%\newenvironment{proof}
%{\noindent%\addvspace{0.3cm}
%{\em Proof}.}
%{\nopagebreak\mbox{}\hfill $\Box$\par\addvspace{0.5cm}}

%\DeclareMathOperator{\supp}{supp}

\newcommand{\bp}{{\textbf{p}}}

\renewcommand{\epsilon}{\varepsilon}

\renewcommand{\u}{{u}}    % changed
  % changed
 % changed

%\newcommand{\vp}{v_p}

 % changed

\begin{document}

\title[  periodic constant nonlocal mean curvature hypersurfaces]
{Multiply-periodic  hypersurfaces with constant nonlocal mean curvature }

\author[Ignace A. Minlend  ]
{Ignace Aristide Minlend }
\address{Ignace Aristide  Minlend: African Institute for Mathematical Sciences of Senegal,
KM 2, Route de Joal, B.P. 14 18. Mbour, S\'en\'egal}
\email{ignace.a.minlend@aims-senegal.org }

\author[A. Niang]
{Alassane Niang}
\address{Alassane Niang:  Cheikh Anta Diop University
B.P. 5005 Dakar-Fann, S\'en\'egal.}
\email{alassane4.niang@ucad.edu.sn}

\author[El hadji Abdoulaye Thiam  ]
{El hadji Abdoulaye Thiam}
\address{ El Hadji Abdoulaye Thiam: African Institute for Mathematical Sciences of Senegal,
KM 2, Route de Joal, B.P. 14 18. Mbour, S\'en\'egal.}
\email{elhadji@aims-senegal.org }

\thanks{Keywords: Fractional perimeter, Non local mean curvature, bifurcation}

%\keywords{Nonlocal mean curvature,...}
%\subjclass[2010]{Primary ; Secondary}

\begin{abstract}
We study  hypersurfaces with fractional mean curvature in $N$-dimensional Euclidean space.
These hypersurfaces are   critical points of the fractional perimeter under a volume constraint.
We use local inversion arguments    to prove  existence of smooth branches of multiply-periodic hypersurfaces bifurcating from suitable parallel hyperplanes. 
\end{abstract}

\maketitle

\section{Introduction and main result}
\label{sec:main-result}
Let $\alpha \in (0,1),$ $N\geq 2$ and  $E$ be an open set  in $\R^N$ with $C^2$-boundary.  The Nonlocal Mean Curvature (NMC) of the set $E$ (or the hypersurface $\de E$) is  defined as
\begin{equation}
  \label{eq:def-frac-curvature}
  H_E(x):= \textrm{PV} \int_{\R^N}\frac{ \tau_{E^c}(y)}{|x-y|^{N+\alpha}}\,dy :=\lim_{\eps \to 0} \int_{|y-x| \ge \eps}
\frac{ \tau_{E^c}(y)}{|x-y|^{N+\alpha}}\,dy \qquad\textrm{  for every $x\in \de E$.}
\end{equation}
 Here and in the following, we use the notation
$$
\tau_{E^c}(y):=1_{  E^c}(y) -1_{E}(y),
$$
where  $1_A$ denotes the characteristic function of $A$ and $E^c:=\R^N\setminus E$.
In the first integral PV denotes the principal value sense.
%For the asymptotics $\alpha$ tending to 0 or 1, $H_E$ should be
The nonlocal mean curvature $H_E$, renormalized  with a positive constant factor $C_{N,\alpha}$, converges locally uniformly to the classical mean curvature, as $\a\to 1$, (see e.g. \cite{Davila2014B,Ambrosio, Abatangelo}).

The NMC defined  in \eqref{eq:def-frac-curvature}  enjoys  a geometric expression that can be derived  via  integration by parts. It is given by 

\begin{equation}\label{geometric-0}
H_E(x)= -\frac{2}{\alpha} \int_{\partial E} \frac{ (x-y)\cdot\nu_E(y)}{|x-y|^{N+\alpha}}\,dy,
\end{equation}
where $\nu_E(y)$ denotes the outer unit normal to $\partial E$ at $y$. 

Notice that the integral in \eqref{geometric-0} is absolutely convergent in the Lebesgue sense if, for $\b>\a$, $\de E$ is of class $C^{1,\b}$ and $\int_{\de E}\frac{dy}{\left(1+\vert y\vert\right)^{N+\a-1}}dy<+\infty$. Moreover with this geometric  expression, the NMC of any $C ^{1,\b}$ orientable hypersurface can be defined. We use both expressions \eqref{eq:def-frac-curvature} and \eqref{geometric-0} in the computations.\\

\textit{In this paper, we say that  a non empty set  $E$  is a Constant Nonlocal    Mean Curvature  (CNMC) set or $ E$ is a set with  CNMC, if $H_E: \de E \to \R$  is pointwisely equivalent to  a  constant function. In this case $\de E$ is called a CNMC hypersurface or a hypersurface with CNMC.}\\

It is  around 2008 that Caffarelli and Souganidis in \cite{Caff-Soug2010}, and Caffarelli, Roquejoffre, and Savin in \cite{Caffarelli2010} introduced the notion of non local mean curvature. See e.g. \cite{Caffarelli2010}, it is a geometric quantity that appears in the first variation of the fractional perimeter, \cite{FFMMM}.
In the recent years, several works have been devoted to the study of
nonlocal minimal surfaces, while there is a lot to be understood,
for instance   the classification of stable nonlocal minimal cones. We refer the reader to \cite{Cabre2015A} for a brief description of
known results.

In the case of constant nonlocal and \textit{nontrivial} mean curvature there have been few results that appeared in the litterature.  In  \cite{Cabre2015A},  Cabr\'e, Fall,  Sol\`a-Morales and Weth
established the analogue of the Alexandrov rigidity theorem for bounded CNMC hypersurfaces in $\R^N$. Namely the sphere is the only  smooth  and bounded CNMC hypersurface.  This result was proved at the same time and independently by  Ciraolo, Figalli, Maggi, and Novaga~\cite{Ciraolo2015}.

The question of existence of unbounded CNMC hypersurfaces (as defined above) has been  first studied  by Cabr\'e, Fall,  Sol\`a-Morales and Weth  in \cite{Cabre2015A}.
Indeed, in \cite{Cabre2015A}, the authors constructed a continuous branch of CNMC one-periodic bands in $\R^2$ bifurcating  from a straight
band $\{(s,\z) \in \R\times \R \, : \, |\z|<\l\}$, for some $\l>0$. This result  was  generalized and improved very recently in \cite{CFW} by  Cabr\'e, Fall
 and Weth. Indeed,  they   established in \cite{CFW}  a nonlocal analogue of the classical result of Delaunay \cite{Delaunay} on periodic cylinders with
constant mean curvature, the so called onduloids in $\R^N$, which are one-periodic. They bifurcate smoothly from a cylinder $\{(s,\z) \in \R\times \R^{N-1} \, : \, |\z|<\l\}$. \\
We note that the paper \cite{Davila2015}  by  D\'avila, del Pino, Dipierro and Valdinoci, uses variational methods to prove
the existence of   $1$-periodic hypersurfaces of Delauney-type in $\R^N$
which minimize a certain renormalized fractional perimeter under a volume constraint.
%More precisely, \cite{Davila2015}~establishes the existence of a $1$-periodic minimizer for every given volume within the
%slab $\{(s,\z)\in\R\times\R^{N-1}\, : \, -1/2<s<1/2\}$.
It is an open problem to know if the boundary of these minimizers are CNMC surfaces as defined above. In \cite{Cabre2015B},  Cabr\'{e}, Fall  and  Weth
constructed CNMC sets  in $\R^N$ which are the countable union of a
perturbed sphere and all its translations through a periodic integer
lattice of dimension $M\leq N.$ These CNMC sets form a $C^2$ branch
emanating from the unit ball alone, where the parameter in the
branch is the distance to the closest lattice point.

The  present paper  deals  with new kind of unbounded multiply-periodic CNMC hypersurfaces with nontrivial NMC.  Indeed,  we  build constant nonlocal mean curvature sets, 
which are $2\pi\Z^{N-1}\times \frac{1}{\t}\Z$-periodic,  bifurcating from the translation-invariant union of  parallel slabs $E_\l^\t:=\{(s,\z)\in \R^{N-1}\times \R\,:\,|\z|<\l\}+\frac{1}{\t} e_N\Z$,  for some $\l\in (0,\frac{1}{2\t})$ and $\t$ positive small.  

We note  that the hypersurfaces  $ \de E_\l^\t$ have  zero local (or classical) mean curvature. However, this is not the case for the nonlocal mean curvature as can be easily verified from the geometric form of the NMC \eqref{geometric-0}. In particular our CNMC hypersurfaces do not have counterparts in the classical  theory of constant mean curvature surfaces.

We consider $u: \R^{N-1} \to (0,\infty)$  a  $2\pi\Z^{N-1}$-periodic  function and  $e_N=(0,\dots,0,1)\in \R^N$.  We look for $2\pi\Z^{N-1}\times \frac{1}{\t}\Z$-periodic sets  with constant nonlocal mean curvature
which have the form
\begin{equation}
  \label{eq:cylindrical-graph}
E_u^\t= E_u+\frac{1}{\t} e_N\Z= \bigcup_{q\in \Z}\left( E_u+\frac{q}{\t} e_N\right),
\end{equation}
where \begin{equation}
  \label{eq:cylindrical-graph}
E_u=\{(s,\z) \in \R^{N-1}\times \R \, : \, |\z|<u (s)\},
\end{equation}
$\t>0$,   $e_N=(0,\dots,0,1)$ and $u\in C^{1,\b}(\R^{N-1})$ is a $2\pi\Z^{N-1}$-periodic function. \\

Provided $\|u\|_{L^\infty(\R^{N-1})}<1/2\t$, it is clear that $E^\t_{u}$ is a  set which is $2\pi\Z^{N-1}\times  \frac{1}{\t}\Z$-periodic. Moreover,  for  every $\l\in (0,\frac{1}{2\t})$, the set $ E^\t_\l$ is  a CNMC set.
% Throughout the paper, $C^{k,\gamma}(\R^2)$ denotes the space of
%$C^k(\R^2)$ bounded functions $u$, with bounded derivatives up to order $k$ and with $D^k u $ having finite H\"older
%seminorm of order $\gamma\in (0,1)$.

To state our main results, we introduce first the function  spaces where we look for $u$. The space $C^{k,\gamma}_{p,e}(\R^{N-1})$ is the subspace of the H\"older space  $C^{k,\gamma}(\R^{N-1})$ constituted by functions which are $2\pi$-periodic and even in each of their variables. %We denote by $\cP$  the set of all permutation $(N-1)\times (N-1)$-matrix coordinate permutations of
We then define the spaces
\begin{align*}
&X_\cP:=\left\{ u\in C^{1,\b}_{p,e}(\R^{N-1})\,:\,  \quad\textrm{ $u$ is invariant under coordinate permutations} \right\},\\
&Y_\cP:=\left\{ u\in C^{0,\b-\a}_{p,e}(\R^{N-1})\,:\,  \quad\textrm{ $u$ is invariant under coordinate permutations} \right\}.
\end{align*}
 The above  spaces are equipped with their respective  standard H\"older norms, see \eqref{eq:def-hoelder-norm} below.
 
Our  main result is the following.
\begin{theorem}\label{th:lemellae}
Let  $N\geq 2$,  $\alpha\in(0,1)$  and   $\b\in (\a,1)$. There exist  $\l_*>0$,  $\t_0,b_0>0$  and a unique  $C^1$
curve
\begin{align*}
&[0,\t_0)\times (-b_0, b_0) \to X_\cP\times \R_+ , \qquad (\t, b) \mapsto ( w_{\t, b}, \lambda_{\t, b})
\end{align*}
with the following properties:
\begin{itemize}
\item[(i)] $  \l_{0,0}=\l_*$,  $w_{0, 0}\equiv\l_*$,
\item[(ii)] For all  $(\t,b) \in [0,\t_0)\times (-b_0, b_0)$, the domains  $$ E_{w_{\t,b}}^\t=\{(s,z)\in \R^{N-1}\times \R \, : \, |z|<w_{\t,b} (s)\}$$
are   constant non local mean curvature sets  and
$$H_{E_{w_{\t,b}}^\t}(x)=H_{E_{\l_{\t,b}}^\t} \qquad\textrm{ for all $x\in \de E_{w_{\t,b}}^\t$}.$$
\item[(iii)] For all  $(\t,b) \in [0,\t_0)\times (-b_0, b_0)$  and  $s=(s_1,\dots,s_{N-1})\in
\R^{N-1}$, we have
\begin{equation}\label{form}
  w_{\t, b}(s)=\l_{\t,b}+b \sum_{i=1}^{N-1}\cos\left(   s_i\right) +b v_{\t,b}( s).
\end{equation}
Furthermore, there exists a constant $c>0$  such that $$|\l_{\t,b}-\l_* |+ \| v_{\t,b}\|_{C^{1,\b}(\R^{N-1})}\leq
c(|\t|^{1+\a}+|b|)$$   and we have  $$\int_{[-\pi,\pi]^{N-1}} v_{\t,b}(s)\cos(s_i)\,
ds=0$$   for all  $i=1,\dots, {N-1}$   and  $(\t,b) \in
[0,\t_0)\times (-b_0, b_0)$.\\\
\end{itemize}
\end{theorem}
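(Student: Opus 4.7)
The plan is to solve the CNMC equation for graphs $E_u^\tau$ by an implicit-function argument, bifurcating from the trivial branch $u\equiv\lambda$ at a critical slab half-width $\lambda_*>0$, within the symmetry class $X_\cP$ of $2\pi$-periodic, even, permutation-invariant profiles. For $u=\lambda+v$ with $v\in X_\cP$ small, I parametrise the upper sheet of $\de E_u^\tau$ as the graph $z=u(s)$ and set
\begin{equation*}
F(\tau,\lambda,v)(s):=H_{E_{\lambda+v}^\tau}\bigl(s,\lambda+v(s)\bigr)-\frac{1}{(2\pi)^{N-1}}\int_{[-\pi,\pi]^{N-1}}H_{E_{\lambda+v}^\tau}\bigl(s',\lambda+v(s')\bigr)\,ds'.
\end{equation*}
Then $E_u^\tau$ is CNMC if and only if $F(\tau,\lambda,v)\equiv 0$, and the quoted $\lambda_{\tau,b}$ is identified with the constant value of $H_{E_u^\tau}$. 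Since $E_\lambda^\tau$ is translation-invariant in $s$, $F(\tau,\lambda,0)\equiv 0$ for every admissible $(\tau,\lambda)$. Using \eqref{geometric-0}, arguments in the spirit of \cite{Cabre2015A,CFW} show that $F$ is $C^1$ from a neighbourhood of the origin in $[0,\tau_0)\times\R_+\times X_\cP$ into $Y_\cP$, the exponent pair $(\beta,\beta-\alpha)$ reflecting the expected $\alpha$-loss of regularity of the nonlocal mean curvature operator.

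The Fr\'echet derivative $\mathcal{L}_{\tau,\lambda}:=\partial_v F(\tau,\lambda,0)$ is a self-adjoint Fourier multiplier on even $2\pi$-periodic functions, with $\mathcal{L}_{\tau,\lambda}\cos(k\cdot s)=m_{\tau,\lambda}(k)\cos(k\cdot s)$ for $k\in\Z^{N-1}$, the symbol arising by testing the singular kernel $|x-y|^{-N-\alpha}$ against a plane-wave normal perturbation of the union of slabs $\{|z|<\lambda\}+\tau^{-1}e_N\Z$. By radial symmetry of the kernel, $m_{0,\lambda}(k)$ depends on $k$ only through $|k|$. A direct computation shows that $\lambda\mapsto m_{0,\lambda}(e_1)$ is continuous and changes sign on $(0,\infty)$; picking its first zero $\lambda_*>0$, I verify the non-degeneracy $\partial_\lambda m_{0,\lambda_*}(e_1)\neq 0$ and the spectral gap $m_{0,\lambda_*}(k)\neq 0$ for every $|k|\neq 1$. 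The kernel of $\mathcal{L}_{0,\lambda_*}$ in the full even periodic space is then the $(N-1)$-dimensional span of the $\cos(s_j)$, and restriction to the permutation-invariant subspace $X_\cP$ collapses it to the one-dimensional line $\R\sum_{j=1}^{N-1}\cos(s_j)$, while $\mathcal{L}_{0,\lambda_*}\colon X_\cP\to Y_\cP$ is Fredholm of index zero.

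I then perform a Lyapunov--Schmidt reduction: decompose $v=b\sum_{j=1}^{N-1}\cos(s_j)+b v^\perp$ with $v^\perp\in X_\cP$ orthogonal in $L^2([-\pi,\pi]^{N-1})$ to each $\cos(s_j)$, and project $F=0$ onto the kernel direction and its complement. On the complement, $\partial_{v^\perp}F$ at the trivial solution is an isomorphism by the previous step, so the implicit function theorem produces a $C^1$ map $(\tau,b,\lambda)\mapsto v_{\tau,b,\lambda}$ with the orthogonality of \eqref{form} built in. Substituting back yields a scalar equation $\Phi(\tau,b,\lambda)=0$ whose derivative $\partial_\lambda\Phi(0,0,\lambda_*)=\partial_\lambda m_{0,\lambda_*}(e_1)\neq 0$; a second application of the implicit function theorem then determines $\lambda_{\tau,b}$ uniquely and identifies $v_{\tau,b}:=v_{\tau,b,\lambda_{\tau,b}}$, with $\lambda_{0,0}=\lambda_*$ and $w_{0,0}\equiv\lambda_*$. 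The quantitative bound $|\lambda_{\tau,b}-\lambda_*|+\|v_{\tau,b}\|_{C^{1,\beta}}\leq c(|\tau|^{1+\alpha}+|b|)$ then follows from a first-order Taylor expansion together with the symbol estimate $m_{\tau,\lambda}(k)-m_{0,\lambda}(k)=O(\tau^{N+\alpha})\subset O(\tau^{1+\alpha})$, obtained by summing the contributions $\sim|q/\tau|^{-N-\alpha}$ of the translated slabs $q\neq 0$.

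The main analytical obstacle is the spectral step: proving the existence and uniqueness of $\lambda_*$ at which $m_{0,\lambda_*}$ vanishes precisely at frequency $|k|=1$, the non-degeneracy $\partial_\lambda m_{0,\lambda_*}(e_1)\neq 0$, and a uniform spectral gap away from this frequency, all framed in the H\"older pair $(X_\cP,Y_\cP)$ with the correct $\alpha$-loss. This generalises the one-dimensional analyses of \cite{Cabre2015A,CFW} to the multi-variable Fourier framework, and controlling the $\tau$-periodic corrections to the symbol uniformly as $\tau\downarrow 0$, matching their decay rate to the quoted error $O(\tau^{1+\alpha})$, is the technical crux that closes the implicit function theorem.
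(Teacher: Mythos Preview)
Your outline matches the paper's proof almost exactly: the paper solves $H(\tau,\lambda+\varphi)-H(\tau,\lambda)=0$ (your mean-subtraction is an equivalent normalisation), linearises at constants to obtain a Fourier multiplier on the $e_k$'s whose eigenvalues depend only on the product $\lambda|k|$ through a single strictly increasing function $\nu$ (this is the paper's Lemma~\ref{lemmL0}, giving simultaneously the unique zero $\lambda_*$, the non-degeneracy $\nu'(\lambda_*)>0$, and the spectral gap at $|k|\neq 1$), reduces the kernel to a line via the permutation symmetry, and applies the implicit function theorem in Crandall--Rabinowitz form after dividing by $b$; your two-step Lyapunov--Schmidt is the standard equivalent. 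The paper's additional ingredient you gloss over is the proof that $L_{\lambda_*}\colon X_\cP\cap V_1^\perp\to Y_\cP\cap V_1^\perp$ is actually an isomorphism in the H\"older scale (Proposition~\ref{prop:All-need-to-CR}(ii)): this is not just ``Fredholm of index zero'' but requires a regularity bootstrap from $H^{1+\alpha}$ to $C^{1,\beta}$ via Schauder-type estimates for $(-\Delta)^{(1+\alpha)/2}$.

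One computational slip: your claim $m_{\tau,\lambda}(k)-m_{0,\lambda}(k)=O(\tau^{N+\alpha})$ is too strong and in fact false. The $q$-th translated slab contributes, after integrating the kernel over the horizontal $\R^{N-1}$,
\[
\int_{\R^{N-1}}\frac{dt}{(|t|^2+(q/\tau)^2)^{(N+\alpha)/2}}\;\sim\;\Bigl(\frac{|\tau|}{|q|}\Bigr)^{1+\alpha},
\]
so summing over $q\in\Z\setminus\{0\}$ gives exactly $O(\tau^{1+\alpha})$ and no better (this is the paper's Proposition~\ref{lem:diff-cH-t-u}). Your heuristic ``$\sim|q/\tau|^{-N-\alpha}$'' forgets the $(N-1)$-dimensional integration. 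Fortunately the bound you actually need, and the one quoted in the theorem, is precisely $O(\tau^{1+\alpha})$, so the argument survives once the exponent is corrected.
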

The nonlocal mean curvature operator  $H$ of  the set $E_u^\t$,  is given by
$$
H(\t,u)(s)= H_{E_u^\t}(s, u(s)) \qquad \textrm { for all $s\in \R^{N-1}$}.
$$
The  result in Theorem \ref{th:lemellae} is  obtained by solving the equation 
\begin{equation}\label{eqnmc}
H(\t,\l +\vp )-H(\t, \l)=0 \quad \textrm{ on} \quad Y_\cP,
\end{equation}
for some $\l\in \R$ and $\vp \in X_\cP$. This is achieved  by means of the implicit function theorem. To proceed, we  first provide the expression of the NMC,  $H_{E_u}$, of the set 
${E_u}$ and we compute its  linearization at any function $u$. Next  we study  the spectral properties  of the linearized operator $\mathcal{H}_{\l}$ with respect to constant  functions $u\equiv \l>0$.  The operator  $\mathcal{H}_{\l}$ has a non trivial $(N-1)$-dimensional  kernel.  However, its restriction  on the subspace  of functions that are even in each variable and  invariant under coordinates permutations yields an operator with  $1$-dimension kernel  and whose eigenvalues are increasing as functions in the variable $\l.$  Combining these properties with regularity  results contained in \cite{Fallll}  and   \cite{Sil},  we  gather  the hypotheses that enable us to apply the implicit function theorem.\\

Let us notice that the equation \eqref{eqnmc}   is valid as  well for $\t=0$ and  $H(0, u)$ is  nothing, but the  non local mean curvature of the set $E_u$, (see Section \ref{s:exist-lamellae}). As a direct  consequence of our construction, we get in Corollary  \ref{th:lemellae22} below, the existence of a smooth branch of constant nonlocal mean curvature set  which are $2\pi\Z^{N-1}$-periodic and bifurcate   from the slab  $E_{\l_*}:=\{(s,\z)\in \R^{N-1}\times \R\,:\,|\z|<\l_*\}$,  where $\l_*$ is the parameter in Theorem  \ref{th:lemellae}.  In the particular case when $N=2$,  we  also recover the branch of CNMC  hypersurfaces constructed   in \cite{Cabre2015A}.

\begin{corollary}\label{th:lemellae22}
Let  $N\geq 2$,  $\l_*$  and $b_0$  be the positive constants  in Theorem \ref{th:lemellae}. Then there  exists a unique  $C^1$
curve
\begin{align*}
&(-b_0, b_0) \to X_\cP\times \R_+ , \qquad  b\mapsto ( w_{ b}, \lambda_{b})
\end{align*}
with the following properties:
\begin{itemize}
\item[(i)] $  \l_{0}=\l_*$,  $w_{0}\equiv\l_*$,
\item[(ii)] For all  $ b \in (-b_0, b_0)$, the domains  $$ E_{w_{b}}=\{(s,z)\in \R^{N-1}\times \R \, : \, |z|<w_{b} (s)\}$$
are   constant non local mean curvature sets,  and
$$H_{E_{w_{b}}}(x)=H_{E_{\l_{b}}} \qquad\textrm{ for all $x\in \de E_{w_{b}}$}.$$
\end{itemize}
\end{corollary}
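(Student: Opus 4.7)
The plan is to obtain Corollary \ref{th:lemellae22} as a specialization of Theorem \ref{th:lemellae} to the slice $\t = 0$. Concretely, I would define $w_b := w_{0,b}$ and $\l_b := \l_{0,b}$ for $b \in (-b_0, b_0)$, and then check that properties (i)--(ii) of the corollary descend from the corresponding properties of the joint curve produced in the theorem.

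For the $C^1$ regularity of $b \mapsto (w_b, \l_b)$, I would note that the domain of the joint curve is the half-closed rectangle $[0, \t_0) \times (-b_0, b_0)$, so $\t = 0$ is an admissible value and restriction to this slice preserves $C^1$ regularity. Property (i) is then immediate: $\l_0 = \l_{0,0} = \l_*$ and $w_0 = w_{0,0} \equiv \l_*$. Uniqueness of $b \mapsto (w_b, \l_b)$ likewise follows by restricting the uniqueness clause of Theorem \ref{th:lemellae} to $\t = 0$.

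The substantive point is property (ii). Here I would invoke the identification, recorded just before the corollary, that $H(\t, u)$ evaluated at $\t = 0$ coincides with the nonlocal mean curvature $H_{E_u}$ of the single slab $E_u$; the $\frac{1}{\t}\Z$-periodic stacking effectively disappears in the limit $\t\to 0$ since the translates $E_u + \frac{q}{\t} e_N$ with $q \neq 0$ escape to infinity. Granting this identification, property (ii) of Theorem \ref{th:lemellae} applied at $\t = 0$ reads $H_{E_{w_b}}(s, w_b(s)) = H_{E_{\l_b}}$ for every $s \in \R^{N-1}$, and since $E_{w_b}$ is symmetric under the reflection $z \mapsto -z$, the same value transfers to the lower part of $\de E_{w_b}$, which gives the required constancy of $H_{E_{w_b}}$ on $\de E_{w_b}$.

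I do not expect any serious obstacle here: the nontrivial work — the implicit function theorem setup, the spectral analysis of the linearized NMC operator on the space of functions symmetric under permutations and reflections, and the associated H\"older regularity estimates — has already been absorbed into the proof of Theorem \ref{th:lemellae}, so the only issue to verify is the routine identification $H(0, u) = H_{E_u}$, which is a $\t \to 0$ limit in the defining integrals.
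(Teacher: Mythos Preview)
Your proposal is correct and matches the paper's own argument: the corollary is obtained by specializing the implicit-function-theorem construction of Theorem~\ref{th:lemellae} to $\t=0$, setting $\l_b:=\l_{0,b}$ and $w_b:=w_{0,b}$, and using that at $\t=0$ the periodic correction $\cH(0,u)$ vanishes so that the equation reduces to $H(\l_b+\vp_b)=H(\l_b)$, i.e.\ the CNMC condition for the single slab $E_{w_b}$. The only cosmetic difference is that the paper takes $\cH(0,u)=0$ as a definition (with $C^1$ continuity at $\t=0$ established in Proposition~\ref{lem:diff-cH-t-u}) rather than as a $\t\to0$ limit, but this does not affect the argument.
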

%It is an open problem to establish the existence of global continuous
%branches of the doubly-periodic  CNMC sets $E_{u_b}$ and the triply-periodic CNMC sets $E_{u_{\t,b}}$ given by Theorem~\ref{res:cyl1} and Theorem \ref{th:lemellae}
%and to study their limiting configuration. For instance,  in the case of classical mean curvature, embedded Delaunay hypersurfaces
%vary from a cylinder to an infinite compound of tangent spheres, \cite{Delaunay}. In the nonlocal case, the complete bifurcation from nonlocal Delauney hypersurfaces to a translation-invariant string of spheres  remains an open problem, see \cite{CFW}.  We believe that a  limiting configurations of our multiply-periodic slabs are translation-invariant rectangular  lattices of small perturbed spheres.

 The result in this paper parallels the result of Fall, Minlend and Weth in \cite{FMW}, where the authors built unbounded periodic domains to Serrin's overdetermined boundary value problem. The boundary of these domains bifurcate from generalized slabs.  We also quote the related  papers \cite{Sicbaldi2010} and \cite{Sicbaldi2012},  where it is
proved the existence of Delauney-type domains in $\R^N$ whose first Dirichlet
eigenfunction has constant Neumann data on the boundary.

Further  notable related results are contained in \cite{Fall-Polymer}, where the author considers equilibrium surfaces which are critical points to an energy constituted by the sum  of the area functional and a repulsive   Coulomb-type potential, arising in diblock copolymer melt. Among other results, the paper   \cite{Fall-Polymer} establishes the existence of multiply-periodic equilibrium patterns domains bifurcating from   parallel slabs. In  \cite{Davila2014B},    Davil\'a, del Pino and Wei proved the existence of  an unstable  two-sheet nonlocal minimal surface which is symmetric with respect to the horizontal plane, bifurcating from two parallel planes.  \\

The paper is organized as follows. In Section \ref{s:NMC-ope},  we provide the   expression of $H_{E_u}$ which will be used to prove the regularity of the NMC operator.    In Section \ref{sec:nonl-probl-solve}, we study the spectral properties of the  linearized operator at a  constant function $\l>0$ and provide qualitative properties of its eigenvalues, as $\l$ varies.  In Section \ref{s:exist-lamellae}, we prove Theorem \ref{th:lemellae} and Corollary  \ref{th:lemellae22}. Finally   the  proof of the regularity of the NMC operator, respectively   the computations of the explicit expression of the first derivative,  are done in Section \ref{sec:preparations-1}.   

\bigskip
\noindent \textbf{ Acknowledgements}: The  authors wish to thank Mouhamed Moustapha Fall  for his helpful suggestions and valuable comments  during the writing of this paper. The first and third  authors  are  funded  by the German Academic Exchange Service (DAAD,  
and the second  by the Non Linear Analysis, Geometry and Applications Project  (NLAGA).

\section{The NMC operator  of  perturbed slabs}\label{s:NMC-ope}
Let $\alpha \in (0,1)$ and $\beta \in (\alpha,1)$.
For a positive function $u \in C^{1,\beta}(\R^{N-1})$, we consider the set $E_u$ as defined in (\ref{eq:cylindrical-graph}).
We first recall  the following expression for the NMC of $E_u$:
\begin{equation}\label{geometric}
H_{E_{u}}(x)= -\frac{2}{\alpha} \int_{\partial E_{u}} |x-y|^{-(N+\alpha)} (x-y)\cdot\nu_{E_{u}}(y)\,dy;
\end{equation}
see e.g. \cite{Cabre2015A}. Here $\nu_{E_{u}}(y)$ denotes the unit outer normal of $ \partial E_{{u}}$ and $dy$ is
the  volume  element of $\de E_{{u}} $.  Next, we consider the open set
\begin{equation}
  \label{eq:def-cO}
\cO:= \left\{{u}  \in C^{1,\beta}(\R^{N-1}) \::\: \inf_{\R^{N-1}} {u}  >0\right\}.
\end{equation}
For ${u}  \in \cO$,  we consider the map $F_{u}:  \R^{N-1}\times \R  \to \R^N$ given by
$$
F_{u}(s, z)= (s,{u}(s)  z ).
$$
The boundary of $E_u$,
$$
\de E_{u}=\left\{(s,{u}(s) \s )\in \R^{N-1}\times \R \, : \, \s\in \{-1,1\}\right\},
$$
is parameterized  by the restriction of  $F_u $ to  $\R^{N-1}\times \{-1,1\}$.

\subsection{The NMC operator}
The following result provides the expression for the NMC of $E_{u}$ in terms of the above parametrization and the
function $u$.
%%%%%%%%%%%%%%%%%%%%%%%%%%%%%%%%
\begin{lemma}\label{lem:geomNMC}
Let ${u}\in \cO$. Then the nonlocal mean curvature $H_{E_{u}}$  ---that we will denote by $H({u})(s)$---
at a point $(s,{u}(s)\th )$, with $\th\in  \{-1,1\}$, does not depend on $\th$ and is given by
\begin{align}
  -\frac{\alpha}{2}  H({u})(s) =&    \int_{\R^{N-1}}
  \frac{
  u(s)-u(s-t)-t \cdot \n u(s-t)
}{
\{|t|^2+({u}(s)-{u}(s-t))^2  \}^{(N+\a)/2}
}
d t\label{eq:Geom-from-NMC} \\
& -   \int_{\R^{N-1}}
  \frac{
  u(s)+u(s-t)+  t \cdot \n u(s-t)
}{
\{|t|^2+({u}(s)+{u}(s-t))^2  \}^{(N+\a)/2}
}
d t  .  \nonumber
\end{align}
Moreover,  the two integrals above converge absolutely in  the Lebesgue sense.
\end{lemma}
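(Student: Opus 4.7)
The strategy is to start from the geometric expression \eqref{geometric} and reduce the surface integral on $\partial E_u$ to a pair of integrals over $\R^{N-1}$ by parametrizing $\partial E_u$ as a disjoint union of two graphs. Write $\partial E_u = \Gamma^+ \cup \Gamma^-$ with $\Gamma^\pm = \{(s', \pm u(s')) : s' \in \R^{N-1}\}$. A direct computation shows that on $\Gamma^+$ one has $\nu_{E_u}(y)\,dy = (-\nabla u(s'), 1)\, ds'$, and on $\Gamma^-$ one has $\nu_{E_u}(y)\, dy = (-\nabla u(s'), -1)\, ds'$: in each case the surface Jacobian $\sqrt{1+|\nabla u(s')|^2}$ cancels with the normalization of the unit outer normal.

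Next I evaluate at $x = (s, u(s)\theta)$ with $\theta = 1$ and apply the change of variables $t = s - s'$. The $\Gamma^+$-contribution to \eqref{geometric} produces the first integrand in \eqref{eq:Geom-from-NMC}, while the $\Gamma^-$-contribution produces the second integrand, with the minus sign supplied by the prefactor $-2/\alpha$ in \eqref{geometric}. The case $\theta = -1$ simply interchanges the roles of $\Gamma^+$ and $\Gamma^-$ (equivalently, one uses the symmetry $E_u = -E_u$ under $\zeta \mapsto -\zeta$) and yields the same formula, which proves independence from $\theta$.

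For absolute convergence, the second integral presents no difficulty: its denominator is bounded below by $(2 \inf u)^{N+\alpha} > 0$ for $|t|$ small and by $|t|^{N+\alpha}$ for $|t|$ large, while its numerator grows at most linearly in $|t|$, so the integrand is globally dominated by $C(1+|t|)^{1-N-\alpha}$, which is integrable on $\R^{N-1}$. The first integral is the delicate one because its denominator degenerates like $|t|^{N+\alpha}$ at $t=0$. Here I use in a crucial way that $u \in C^{1,\beta}$ with $\beta > \alpha$: the identity $u(s) - u(s-t) = \int_0^1 \nabla u(s - t + rt) \cdot t\, dr$ together with H\"older continuity of $\nabla u$ gives
\begin{equation*}
|u(s) - u(s-t) - t \cdot \nabla u(s-t)| \leq [\nabla u]_{C^{0,\beta}}\, |t|^{1+\beta},
\end{equation*}
so the first integrand is pointwise bounded by $C|t|^{1+\beta - N - \alpha}$, integrable near $0$ in $\R^{N-1}$ precisely because $\beta > \alpha$. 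The Taylor-type cancellation underlying this last bound is the only nontrivial step of the proof; once it is in place, the remainder of the argument is careful bookkeeping of signs and of the Jacobian factors.
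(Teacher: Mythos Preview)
Your proof is correct and follows precisely the approach the paper invokes: the paper does not give a self-contained argument but refers to \cite[Lemma~2.1]{CFW}, whose proof proceeds exactly as you do---parametrize $\partial E_u$ as the union of the two graphs $\Gamma^\pm$, note that the surface Jacobian cancels against the normalization of the outer normal, substitute $t=s-s'$, and use the $C^{1,\beta}$ Taylor bound on the numerator of the first integral to establish absolute integrability near $t=0$. There is nothing to add.
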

\begin{proof}
	The proof is similar to the one in  \cite[Lemma 2.1]{CFW}. We then skip it.
\end{proof}
\section{Properties of the linearized NMC operator }
\label{sec:nonl-probl-solve}

\subsection{The linearized NMC operator}\label{s:non-to-solve}
In order to  compute the linearized nonlocal mean curvature operator, we need to introduce the functional spaces in which we work.  Consider the Banach spaces
$$%\begin{equation}\label{spX}
 C^{k,\g}_{p}(\R^{N-1})=\{  u\in C^{k,\g}(\R^{N-1}): \text{ $u$ is $2\pi\Z^{N-1}$-periodic }\}
$$%\end{equation}
and
$$%\begin{equation}\label{spX}
 C^{k,\g}_{p,e}(\R^{N-1})=\{  u\in C^{k,\g}(\R^{N-1}): \text{ $u$ is $2\pi\Z^{N-1}$-periodic and even in each variable }\},
$$
for $k\in \N$ and $\g\in (0,1)$.
The norms in two spaces above are the standard
 $C^{k,\g}(\R^{N-1})$-norms,  defined by
\begin{equation}
\label{eq:def-hoelder-norm}
\|u\|_{C^{k,\gamma}(\R^{N-1})} : = \sum_{j=0}^k \| D^j u \|_{L^\infty(\R^{N-1})} +  \sup_{\stackrel{s,r \in \R^{N-1}}{s \not = r}}\frac{|D^k u (s)- D^ku(r)|}{|s-r|^{\gamma}}.
\end{equation}
We denote by $\cP$ the set of $(N-1)\times (N-1)$-matrix of permutations.
We then define
\begin{align*}
&X_\cP:=\left\{ u\in C^{1,\b}_{p,e}(\R^{N-1})\,:\, u(s)=u(\bp(s)),\quad\textrm{ for all $s \in \R^{N-1}$ and $\bp\in \cP$} \right\},\\
&Y_\cP:=\left\{  \tilde u\in C^{0,\b-\a}_{p,e}(\R^{N-1})\,:\, \tilde u(s )= \tilde u(\bp(s)  ),\quad\textrm{ for all $ s\in \R^{N-1}$ and $\bp\in \cP$} \right\}.
\end{align*}
The norms of     $X_\cP$ and $Y_\cP$, are those  of  $C^{1,\b}(\R^{N-1})$ and   $C^{0,\b-\a}(\R^{N-1})$
respectively.

The map $H: \calO \subset C^{1,\beta}(\R^{N-1}) \to C^{0, \b-\a}(\R^{N-1})$ is smooth in $\cO$ by Proposition \ref{lem:diff-H},
and  clearly $H$ sends $X_\cP\to Y_\cP$ by change of variables.

We will use the expression \eqref{eq:Geom-from-NMC} to show  the smoothness of $H:\cO\to C^{0,\b-\a}(\R^{N-1})$. We now state the following fundamental result which will be proved in Section \ref{sec:preparations-1}.
\begin{proposition}\label{lem:diff-H}
The map $H: \calO \subset C^{1,\beta}(\R^{N-1}) \to C^{0, \b-\a}(\R^{N-1})$ is of class $C^{\infty}$.  Moreover for every $k\in \N$ and $u\in \cO$, there exists a constant  $c >0$, only depending on $\a,\b,N$ and $\inf_{\R^{N-1}} u$,  such that
\be\label{eq:est-deriv-H-k}
\|D^k H(u)\|\leq c(1+\|u\|_{C^{1,\beta}(\R^{N-1}) }) ^c.
\ee
In addition, if $u \in \calO$, $\l>0$ and  $v,w \in C^{1,\beta}(\R^{N-1})$, then
 \begin{align}\label{eq:dHuv}
  D H ( u)[v](s)&=2 PV\int_{\R^{N-1}}\frac{v(s)-v(s-t)}{(|t|^2+(u(s)-u(s-t))^2)^{\frac{N+\a}{2}}}\, dt \nonumber\\
 & \quad-2\int_{\R^{N-1}} \frac{v(s)+v(s-t)}{(|t|^2+(u(s)+u(s-t))^2)^{\frac{N+\a}{2}}}\, dt.
  \end{align}
\end{proposition}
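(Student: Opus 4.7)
The plan is to work with the geometric expression \eqref{eq:Geom-from-NMC} to establish smoothness of $H$ and with the original principal value definition \eqref{eq:def-frac-curvature} to derive the clean formula \eqref{eq:dHuv}. Writing $-\tfrac{\alpha}{2} H(u)(s) = \cI_1(u)(s) - \cI_2(u)(s)$ with $\cI_1,\cI_2$ the two integrals in \eqref{eq:Geom-from-NMC}, the integrand of $\cI_2$ has denominator bounded below by $|t|^2 + 4(\inf_{\R^{N-1}} u)^2>0$ and decays like $|t|^{-N-\alpha+1}$ at infinity, so the smoothness of $\cI_2:\calO\to C^{0,\beta-\alpha}(\R^{N-1})$ is a routine dominated-convergence argument.

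The heart of the matter is $\cI_1$, whose denominator degenerates like $|t|^{N+\alpha}$ near $t=0$. Taylor's theorem in integral form gives
\begin{align*}
u(s)-u(s-t)-t\cdot\nabla u(s-t) = \int_0^1\bigl[\nabla u(s-\tau t)-\nabla u(s-t)\bigr]\cdot t\,d\tau,
\end{align*}
which, by the $C^{0,\beta}$ regularity of $\nabla u$, is bounded by $C[\nabla u]_{C^{0,\beta}}|t|^{1+\beta}$. Hence the integrand of $\cI_1$ is controlled by $C|t|^{1+\beta-N-\alpha}$, integrable on $\R^{N-1}$ near the origin precisely because $\beta>\alpha$. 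A parallel estimate on the $s$-difference of the integrand gives $\cI_1(u)\in C^{0,\beta-\alpha}(\R^{N-1})$ with the norm bound of \eqref{eq:est-deriv-H-k} for $k=0$.

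To derive \eqref{eq:dHuv} I would revert to the principal value form. Slicing $y=(y',y_N)$, using $\tau_{E_u^c}(y)=\mathrm{sign}(|y_N|-u(y'))$, and substituting $y'=s-t$ yields
\begin{align*}
H(u)(s)=\mathrm{PV}\int_{\R^{N-1}}\!\int_{\R}\frac{\mathrm{sign}(|y_N|-u(s-t))}{[|t|^2+(u(s)-y_N)^2]^{(N+\alpha)/2}}\,dy_N\,dt.
\end{align*}
Formally differentiating in $u$: the derivative of the sign function produces delta masses at $y_N=\pm u(s-t)$, contributing the $v(s-t)$-terms of \eqref{eq:dHuv}, while the derivative hitting the denominator can be rewritten using $(N+\alpha)(u(s)-y_N)[\,\cdot\,]^{-(N+\alpha)/2-1}=\partial_{y_N}[\,\cdot\,]^{-(N+\alpha)/2}$; an integration by parts in $y_N$ transfers the derivative onto the sign function, producing the $v(s)$-terms via the very same delta masses. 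Collecting the four pieces gives exactly the right-hand side of \eqref{eq:dHuv}, and the PV is forced on the first integral because $v(s)-v(s-t)=O(|t|)$. Rigorously, this formal calculation should be justified by a cutoff $\{|y_N\mp u(s-t)|>\delta\}$, passing $\delta\to 0$ with the aid of the Taylor bound above.

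For $C^\infty$-smoothness and the full estimate \eqref{eq:est-deriv-H-k}, I would proceed by induction on $k$, differentiating \eqref{eq:dHuv} repeatedly. Each additional derivative introduces factors of the form $(u(s)\pm u(s-t))(w(s)\pm w(s-t))$ in the numerator and a higher power of the denominator; the relevant Taylor cancellations are preserved because every direction lies in $C^{1,\beta}$, and the singular part remains integrable by $\beta>\alpha$, while the polynomial growth in $\|u\|_{C^{1,\beta}}$ is tracked by the number of factors appearing. The main obstacle is the careful book-keeping needed to produce dominating majorants that are integrable \emph{uniformly} on bounded subsets of $\calO$, especially across the $y_N$-integration by parts that kills the nascent $\nabla v$-terms and collapses the first variation to the clean principal-value form \eqref{eq:dHuv}.
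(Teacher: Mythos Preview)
Your outline is broadly sound, but it diverges from the paper's proof in one organizing choice, and that choice matters for how cleanly the details go through.

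The paper does not attack $H$ directly. It introduces a regularized family $H_\e$, $\e\ge 0$, obtained by replacing each denominator $|t|^2+(\cdots)^2$ in \eqref{eq:Geom-from-NMC} by $|t|^2+(\cdots)^2+\e^2$, and proves $C^\infty$-smoothness of $H_\e:\cO_\d\to C^{0,\b-\a}$ together with the bound \eqref{eq:est-deriv-H-k} \emph{uniformly in $\e\ge 0$}. The $k$-th derivative is handled not by induction on the first-variation formula, but by writing the integrand as a product of a linear factor $\L_2(u,s,t)$ (resp.\ $\L_4$) and a kernel $\cK_{\a,\e}$ (resp.\ $\ov\cK_{\a,\e}$), and applying Fa\`a di Bruno to the kernel; this gives integrable majorants at once for every $k$, with the polynomial dependence on $\|u\|_{C^{1,\b}}$ read off from the partition sums. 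The identification $H=H_0$ is then immediate.

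For the explicit formula \eqref{eq:dHuv}, the paper's argument is close in spirit to yours but technically different. Rather than a $\d$-cutoff around $y_N=\pm u(s-t)$, it exploits the divergence identity
\[
\nabla_y\cdot\frac{x-y}{(|x-y|^2+\e^2)^{(N+\a)/2}}=\frac{\a}{(|x-y|^2+\e^2)^{(N+\a)/2}}-\frac{(N+\a)\e^2}{(|x-y|^2+\e^2)^{(N+\a+2)/2}}
\]
to write $H_\e(u)=h_{\e,\a}(u)-\tfrac{(N+\a)\e^2}{\a}h_{\e,\a+2}(u)$, where $h_{\e,\varrho}$ is the $\e$-regularized PV integral. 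After the change of variables $y=(\bar s, u(\bar s)z)$ one differentiates in $u$ and integrates by parts in $z$ (your delta-mass heuristic, now rigorous because $\e>0$). The passage $\e\to 0$ is not done by dominated convergence alone: the paper observes that $DH_\e(u)[v](s)=B_{s,\a}(\e)+\tfrac{\e}{\a}B_{s,\a}'(\e)$ for a certain $B_{s,\a}$ continuous on $[0,1]$, integrates this ODE-type relation over $[0,\ov\e]$, divides by $\ov\e$, and sends $\ov\e\to 0$ to conclude $DH(u)[v](s)=\lim_{\e\to 0}B_{s,\a}(\e)$.

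What your route buys is directness; what the paper's route buys is that the $\e$-regularization simultaneously makes the differentiation honest and feeds the uniform estimates needed for the limit. Your plan to get higher derivatives by iterating \eqref{eq:dHuv} would force you to re-justify differentiation under a PV integral at every step; the paper sidesteps this entirely by doing all the $C^\infty$ work on the absolutely convergent geometric side and using the PV form only once, to identify $DH(u)[v]$.
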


Next we need to study properties of the family of linearized operators
$$%\begin{equation}
 % \label{eq:def-L-operator}
L_\l :=   \l^{1+\a}  DH(\l) \;\in \; \cB(C^{1,\beta}_{p,e}(\R^{N-1}),C^{0,\beta-\a}_{p,e}(\R^{N-1} )), \qquad \quad \l>0.
$$%\end{equation}
Here and in the following, $\cB(X,Y)$ denotes the space of bounded linear operators between Banach spaces $X$ and $Y$.
 \subsection{Study of the linearized NMC operator at constant functions}\label{ss:linearized-op}

We now study the behavior of the eigenvalues of the operator $L_\l$,  as  $\l>0$ varies.
By Proposition~\ref{lem:diff-H}, $L_\l$ is given by
\begin{equation}
   \label{eq:def-L-operator-1}
  L_\l v(s)= 2\l^{1+\a} \left(  PV\int_{\R^{N-1}} \frac{v(s)-v(s- t)}{| t|^{N+\a}}  \,d t-     \int_{\R^{N-1}} \frac{v(s)+v(s-t)}{(|t|^2+4\l^2)^{\frac{N+\alpha}{2}}}  \,dt \right).
\end{equation}
\begin{lemma}\label{lemmL0}
\label{sec:nonl-probl-solve-1}
Let   $\l>0$. The functions  $e_k \in C^{1,\beta}_{p,e}(\R^{N-1})$ given by
\begin{equation}
  \label{eq:def-e-k}
  s=(s_1,\dots, s_{N-1}) \mapsto e_k(s )=\prod_{i=1}^{N-1}\cos(k_is _i), \quad k=(k_1,\dots, k_{N-1}) \in \N^{N-1}
\end{equation}
are the eigenfunctions of $L_\l: C^{1,\beta}_{p,e}(\R^{N-1})\to C^{0,\beta-\a}_{p,e}(\R^{N-1} )$ with
\be \label{eq:L-lam-ek-eq-nuk-ek}
L_\l e_k=\nu(\l |k|) e_k,
\ee
where $|k|=\sqrt{k_1^2+\dots+k_{N-1}^2}$ and  $\nu:(0,+\infty)\to \R$ is    given by
\begin{align} \label{eq:lkhk}
\nu(R)=2 R^{1+\a} \left(\int_{\R^{N-1}}\frac{1-    \cos \left(   t_1   \right)   }{|{  t}|
 ^{{N+\alpha}}}d{ t}-  \int_{\R^{N-1}}\frac{1+     \cos \left(  t_1   \right)   }{(|{ t}|^2+4R^{2})^{\frac{N+\alpha}{2}}}d{ t} \right).
\end{align}
%where $a_\a:=2^{- \alpha}\G^2\left(\frac{-1-\a}{2}\right)\cos\left(\a\pi/{2} \right)$ and $b_\a=\frac{2^{1-\a}\pi}{1+\alpha} $.
Moreover,
\be \label{eq:nu-prime-positive}
 \nu'(R)>0 \qquad \textrm{  for every $R>0$,}
\ee
 \begin{align}
\label{eigen}
\lim_{R \to +\infty}\frac{\nu(R)}{R^{1+\a}}= 2\int_{\R^{N-1}}\frac{1-    \cos \left(   t_1   \right)   }{|{  t}|
 ^{{N+\alpha}}}d{ t} >0
\end{align}
and
 \begin{align}
\nu_0:=\lim_{R\to 0}\nu(R)=-4 \int_{\R^{N-1}}\frac{1   }{(|{ t}|^2+4 )^{\frac{N+\alpha}{2}}}d{ t}    <0  .
\label{assym}
\end{align}

\end{lemma}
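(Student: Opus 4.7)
The plan is to dispatch the four assertions in sequence---the eigenvalue identity, the two asymptotic limits, and the monotonicity $\nu'(R)>0$---with the last being the main obstacle. For the eigenfunction identity $L_\l e_k=\nu(\l|k|)e_k$, I would exploit the factorization $\prod_{i=1}^{N-1}\cos(k_is_i)=2^{-(N-1)}\sum_{\epsilon\in\{\pm 1\}^{N-1}}e^{i(\epsilon\odot k)\cdot s}$, where $\epsilon\odot k:=(\epsilon_1k_1,\dots,\epsilon_{N-1}k_{N-1})$. Applying the translation-invariant operator $L_\l$ from \eqref{eq:def-L-operator-1} to a pure exponential $e^{i\xi\cdot s}$ factors that exponential out, and since the kernels are even in $t$ the complex factor $e^{-i\xi\cdot t}$ may be replaced by $\cos(\xi\cdot t)$ (the imaginary parts vanish by odd symmetry). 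The resulting eigenvalue depends on $\xi$ only through $|\xi|$ by the radial symmetry in $t$, so rotating $\xi$ onto $|\xi|e_1$ and rescaling $t\mapsto t/|\xi|$ yields exactly \eqref{eq:lkhk} with $R=\l|\xi|$. As $|\epsilon\odot k|=|k|$ for every $\epsilon$, all terms in the decomposition contribute the same eigenvalue $\nu(\l|k|)$.

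For the limit \eqref{eigen}, I would divide \eqref{eq:lkhk} by $R^{1+\alpha}$ and invoke dominated convergence on the second integral, whose integrand is bounded by $(|t|^2+1)^{-(N+\alpha)/2}\in L^1(\R^{N-1})$ for $R\geq 1/2$ and vanishes pointwise as $R\to\infty$. For \eqref{assym}, the first integral of \eqref{eq:lkhk} is $2R^{1+\alpha}\int(1-\cos t_1)|t|^{-(N+\alpha)}\,dt\to 0$, while the substitution $t=2Ru$ in the second brings it to $2^{-\alpha}\int_{\R^{N-1}}(1+\cos(2Ru_1))(|u|^2+1)^{-(N+\alpha)/2}\,du$, which by dominated convergence tends to $2^{1-\alpha}\int(|u|^2+1)^{-(N+\alpha)/2}\,du$. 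Undoing the rescaling converts this into $4\int(|t|^2+4)^{-(N+\alpha)/2}\,dt$ and gives \eqref{assym}.

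The monotonicity is the delicate point. Rescaling $t=Rs$ in \eqref{eq:lkhk} puts the eigenvalue in the form
\[
\nu(R)=2\int_{\R^{N-1}}\frac{1-\cos(Rs_1)}{|s|^{N+\alpha}}\,ds-2\int_{\R^{N-1}}\frac{1+\cos(Rs_1)}{(|s|^2+4)^{(N+\alpha)/2}}\,ds,
\]
and differentiation under the integral sign gives
\[
\nu'(R)=2\int_{\R^{N-1}}s_1\sin(Rs_1)\left[\frac{1}{|s|^{N+\alpha}}+\frac{1}{(|s|^2+4)^{(N+\alpha)/2}}\right]ds.
\]
I would then integrate out the transverse variables $s'=(s_2,\dots,s_{N-1})$ by rescaling ($s'=|s_1|\sigma$ for the first kernel and $s'=\sqrt{s_1^2+4}\,\sigma$ for the second), which reduces each term to the same positive constant $C_1=\int_{\R^{N-2}}(1+|\sigma|^2)^{-(N+\alpha)/2}d\sigma$ times a one-dimensional integral in $s_1$. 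Evenness in $s_1$ then identifies these, respectively, with a multiple of the Mellin transform $\int_0^\infty s^{-1-\alpha}\sin(Rs)\,ds=R^\alpha\Gamma(1-\alpha)\sin(\pi\alpha/2)/\alpha>0$, and of $\int_0^\infty s\sin(Rs)(s^2+4)^{-1-\alpha/2}\,ds$. For the latter I would apply the subordination identity $(s^2+4)^{-1-\alpha/2}=\Gamma(1+\alpha/2)^{-1}\int_0^\infty t^{\alpha/2}e^{-t(s^2+4)}\,dt$ together with the explicit Gaussian $\int_0^\infty s\sin(Rs)e^{-ts^2}\,ds=\frac{R\sqrt{\pi}}{4t^{3/2}}e^{-R^2/(4t)}$; Fubini then exhibits the integral as a manifestly positive double integral. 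Adding the two positive contributions yields $\nu'(R)>0$.
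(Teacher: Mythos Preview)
Your proof is correct, and in a couple of places it is more complete than the paper's own argument.

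For the eigenfunction identity, the paper proceeds differently: it first proves, by induction on the number of factors, an identity of the form
\[
\int_{\R^{\ell}} E(t)\prod_{i=1}^{\ell}\cos(k_is_i-k_it_i)\,dt
=\Bigl(\prod_{i=1}^{\ell}\cos(k_is_i)\Bigr)\int_{\R^{\ell}}\Bigl(\prod_{i=1}^{\ell}\cos(k_it_i)\Bigr)E(t)\,dt
\]
for even kernels $E$, and then applies it to the two kernels in $L_\l$ to obtain $L_\l e_k=\l^{1+\a}\sigma_k(\l)e_k$ with $\sigma_k(\l)$ as in \eqref{eq:defSigma-k-lambd}. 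To reduce the dependence on $k$ to a dependence on $|k|$ only, the paper rescales and then shows that the resulting spherical function $f(\th)=\int_0^\infty\rho^{-1-\a}\int_{S^{N-2}}\bigl(1-\prod_i\cos(\rho(\s\cdot e_i)(\th\cdot e_i))\bigr)d\s\,d\rho$ is invariant under $O(N-1)$, hence constant. Your route via the decomposition $e_k=2^{-(N-1)}\sum_{\epsilon}e^{i(\epsilon\odot k)\cdot s}$ and the Fourier-multiplier structure of $L_\l$ reaches the same conclusion more directly; the paper's argument has the minor advantage of staying entirely within real-valued functions in $C^{1,\b}_{p,e}$.

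For the limits \eqref{eigen} and \eqref{assym}, your dominated-convergence arguments match the paper's (the paper also substitutes $t\mapsto Rt$ in the second integral to compute the $R\to 0$ limit).

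For the monotonicity \eqref{eq:nu-prime-positive}, the paper asserts only that it ``follows immediately from the expression of $\nu$'' and gives no further detail. Your argument---rescaling to $\nu(R)=2\int\frac{1-\cos(Rs_1)}{|s|^{N+\a}}ds-2\int\frac{1+\cos(Rs_1)}{(|s|^2+4)^{(N+\a)/2}}ds$, differentiating, reducing to one-dimensional integrals via the transverse rescalings, and handling the Bessel-type term by Gaussian subordination---supplies an explicit, self-contained justification that the paper omits. (One could also argue the positivity of the second one-dimensional integral by recognizing it as minus the derivative of a Bessel-kernel Fourier transform, but your subordination computation is equally valid and requires no outside facts.) Note, for full rigor, that differentiation under the integral in the singular term is justified because $|s_1\sin(Rs_1)|\le R|s|^2$ near $0$ and $\le |s|$ at infinity, giving an integrable local-in-$R$ majorant.
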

\begin{proof}

 To get \eqref{eq:lkhk},  we  prove by induction on  $\ell\in \N$,  $\ell \geq 1$  that for any even function $E$ defined on  $\R^{\ell}$,
\begin{equation}\label{eq: pro induc}
\int_{\R^{\ell}}
E(t)\prod_{i=1}^{\ell}\cos(k_is_i-k_it_i)d{t}=\prod_{i=1}^{\ell}\cos(k_is_i)\int_{\R^{\ell}}
\prod_{i=1}^{\ell}\cos(k_it_i)E(t)d{t}.
\end{equation}
Recall the  fact that
\begin{equation}\label{eq: rapeel}
\cos(k_i s_i-k_i
t_i)=\cos(k_is_i)\cos(k_it_i)+\sin(k_is_i)\sin(k_it_i).
\end{equation}
For $\ell=1$, \eqref{eq: pro induc} is obviously true using
\eqref{eq: rapeel}, the evenness of  $E$ and the oddness of the sine
function.

For $\ell=2$, we have
\begin{align}\label{eq: pro in}
\int_{\R^{2}}
&E(t)\prod_{i=1}^{2}\cos(k_is_i-k_it_i)d{t}\nonumber\\
&=\int_{\R} \cos(k_2 s_2-k_2
t_2)\int_{\R}\biggl([\cos(k_1s_1)\cos(k_1t_1)+\sin(k_1s_1)\sin(k_1t_1)]E(t)
d{t_1}\biggl)d{t_2}\nonumber\\
&=\int_{\R} \cos(k_2 s_2-k_2
t_2)\biggl(\int_{\R}\cos(k_1s_1)\cos(k_1t_1)E(t)
d{t_1}\biggl)d{t_2},
\end{align}
where we have used the evenness of $E$ to get the last line.

Arguing as in  \eqref{eq: pro in}, we obtain
\begin{align}\label{eq: eee}
\int_{\R^{2}}
E(t)\prod_{i=1}^{2}\cos(k_is_i-k_it_i)d{t}&=\int_{\R}\cos(k_1s_1)\cos(k_1t_1)\biggl(
\int_{\R} \cos(k_2 s_2-k_2 t_2)E(t)
d{t_2}\biggl)d{t_1}\nonumber\\
&=\int_{\R}\cos(k_1s_1)\cos(k_1t_1)\biggl( \int_{\R} \cos(k_2
s_2)\cos(k_2 t_2)E(t)
d{t_2}\biggl)d{t_1}\nonumber\\
&=\cos(k_1s_1)\cos(k_2s_2)\int_{\R^2}
\cos(k_1 t_1)\cos(k_2 t_2)E(t) d{t}\nonumber\\
&=
\prod_{i=1}^{2}\cos(k_is_i)\int_{\R^{2}}\prod_{i=1}^{2}\cos(k_it_i)E(t)d{t}.
\end{align}
Let us now assume that \eqref{eq: pro induc} holds true for $\ell-1$, ($\ell \in \N$), then
\begin{align}\label{eq: pro indl}
&\int_{\R^{\ell}}E(t)\prod_{i=1}^{\ell}\cos(k_is_i-k_it_i)d{t}\nonumber\\
&=\int_{\R}\cos(k_\ell
s_\ell-k_\ell t_\ell)
\biggl(\int_{\R^{\ell-1}}E(t)\prod_{i=1}^{\ell-1}\cos(k_is_i-k_it_i)d t_{1}...d t_{\ell-1}\biggl)d t_\ell\nonumber\\
&=
\prod_{i=1}^{\ell-1}\cos(k_is_i)\int_{\R^{\ell-1}}\prod_{i=1}^{\ell-1}\cos(k_it_i)\biggl(\int_{\R}E(t)\cos(k_\ell s_\ell-k_\ell t_\ell)d t_\ell \biggl) d t_{1}...d t_{\ell-1}\nonumber\\
&=
\prod_{i=1}^{\ell}\cos(k_is_i)\int_{\R^{\ell}}E(t)\prod_{i=1}^{\ell}\cos(k_it_i)d
t.
\end{align}
With this we conclude that \eqref{eq: pro induc} holds true for all
$\ell\in \N$

Applying \eqref{eq: pro induc}  to the even functions  $t\mapsto |t|^{-N-\a}$ and $t\mapsto (|{t}|^2+4\lambda
^{2})^{-\frac{N+\alpha}{2}}$, we get
$$
\int_{\R^{N-1}}
\dfrac{e_{k}(s)-e_{k}(s-{t})}{|{t}|^{N+\alpha}}d{t}=e_{k}(s)\int_{\R^{N-1}}
\dfrac{1-e_{k}({t})}{|{t}|^{N+\alpha}}d{t},
$$
$$\int_{\R^{N-1}}\frac{e_k(s-{t})}{(|{t}|^2+4\lambda
^{2})^{\frac{N+\alpha}{2}}}d{t}=e_k(s)\int_{\R^{N-1}}\frac{e_k({t})}{(|{t}|^2+4\lambda
^{2})^{\frac{N+\alpha}{2}}}d{t}.
$$
and it follows that
\begin{equation}\label{eq:eival11}
L_\l e_k(s)= \l^{1+\a}\sigma_k(\l)e_k(s),
\end{equation}
 where  we have set
 \be \label{eq:defSigma-k-lambd}
 \sigma_k(\l):=2 \int_{\R^{N-1}} \dfrac{1-e_{k}({t})}{|{t}|^{N+\alpha}}d{t}-2
 \int_{\R^{N-1}}\frac{1+e_k({t})}{(|{t}|^2+4\lambda
^{2})^{\frac{N+\alpha}{2}}}d{t}.
 \ee
Now by the  change of variables $t=\frac{\ov t}{|k|}$, we get
$$
 \sigma_k(\l)=2|k|^{1+\a} \int_{\R^{N-1}} \dfrac{1- \prod_{i=1}^{N-1}\cos \left( \ov t_i \frac{k_i}{|k|} \right)    }{|{ \ov t}|^{N+\alpha}}d{ \ov t}-2 |k|^{1+\a}
 \int_{\R^{N-1}}\frac{1+   \prod_{i=1}^{N-1}\cos \left( \ov t_i \frac{k_i}{|k|} \right)   }{(|{\ov t}|^2+4|k|^2\lambda
^{2})^{\frac{N+\alpha}{2}}}d{\ov t}.
$$
We  claim that the first integral in the above expression does not depend on $k$, while the second one is  a function of $|k|$. We consider $f: S^{N-2}\to \R$ given by, for all $\th\in S^{N-2}$
$$
 f  (\th):= \int_0^\infty { \rho^{-1-\alpha}} \int_{S^{N-2}}  \left(1-  \prod_{i=1}^{N-1}\cos(\rho  (\s\cdot e_i)( \th\cdot e_i) )\right) \, d{ \s} d\rho.
 $$
Let  $\cR\in O(N-1)$ be a rotation  matrix   and denote by $\cR^t$ its transpose.  By a change of variables $\s=\cR \ti \s $, we get
\begin{align*}
 f  (\cR \th)&= \int_0^\infty { \rho^{-1-\alpha}} \int_{\cR^t S^{N-2}}  \left(1-  \prod_{i=1}^{N-1}\cos(\rho  (\ti \s\cdot  \cR^t e_i)( \th\cdot  \cR^t e_i) )\right) \, d{\ti \s} d\rho\\
 &= \int_0^\infty { \rho^{-1-\alpha}} \int_{S^{N-2}}  \left(1-  \prod_{i=1}^{N-1}\cos(\rho  (\s\cdot   e_i)( \th\cdot   e_i) )\right) \, d{ \s} d\rho\\
 &= f(\th).
\end{align*}
Since $ \cR$ is arbitrary, it then follows that
$$
f(\th)=f(e_1).
$$
Using polar coordinates, we get
\begin{align*}
\int_{\R^{N-1}} \dfrac{1- \prod_{i=1}^{N-1}\cos \left( \ov t_i \frac{k_i}{|k|} \right)    }{|{ \ov t}|^{N+\alpha}}d{ \ov t}=   f  \left(\frac{k}{|k|} \right)= f(e_1) .
\end{align*}
That is
\begin{align*}
\int_{\R^{N-1}} \dfrac{1- \prod_{i=1}^{N-1}\cos \left( \ov t_i \frac{k_i}{|k|} \right)    }{|{ \ov t}|^{N+\alpha}}d{ \ov t}= \int_{\R^{N-1}} \dfrac{1-  \cos \left( \ov t_1  \right)    }{|{ \ov t}|^{N+\alpha}}d{ \ov t}.
\end{align*}
By a similar reasoning, we also have
$$
\int_{\R^{N-1}}\frac{1+   \prod_{i=1}^{N-1} \cos \left( \ov t_i \frac{k_i}{|k|} \right)   }{(|{\ov t}|^2+4|k|^2\lambda
^{2})^{\frac{N+\alpha}{2}}}d{\ov t}= \int_{\R^{N-1}}\frac{1+     \cos \left( \ov t_1   \right)   }{(|{\ov t}|^2+4|k|^2\lambda
^{2})^{\frac{N+\alpha}{2}}}d{\ov t}.
$$
The last two equalities above allows to  see that  $\l^{1+\a} \sigma_k(\l)=\nu(\l|k|) $,  where  $\nu$ is the function defined in  \eqref{eq:lkhk}. This also yields  \eqref{eq:L-lam-ek-eq-nuk-ek}.

  Now \eqref{eq:nu-prime-positive} and \eqref{eigen} follows immediately from the expression of $\nu$. Finally, to see \eqref{assym}, we observe that
$$
R^{1+\a}\int_{\R^{N-1}}\frac{1+     \cos \left( \ov t_1   \right)   }{(|{\ov t}|^2+4 R
^{2})^{\frac{N+\alpha}{2}}}d{\ov t}= \int_{\R^{N-1}}\frac{1+     \cos \left( R t_1   \right)   }{(|{  t}|^2+4
 )^{\frac{N+\alpha}{2}}}d{  t},
$$
where we made the change of variable $\ov t = R t$.

\end{proof}
In the following, we consider the fractional Sobolev spaces
\begin{equation}
  \label{eq:def-hpe}
H^{\s}_{p,e} := \Bigl \{v \in H^{\s}_{loc}(\R^{N-1}) \::\: \text{ $v$ is $2\pi\Z^{N-1}$-periodic and even in each variable}\Bigl \}
\end{equation}
for $\s\geq 0$, and we put $L^2_{p,e}:= H^{0}_{p,e}$. Note that $L^2_{p,e}$ is a Hilbert space with scalar product
$$
(u,v) \mapsto \langle u,v \rangle_{L^2_{p,e}} := \int_{[-\pi,\pi]^{N-1}} u(t)v(t)\,dt \qquad \text{for $u,v \in L^2_{p,e}$.}
$$
We denote the induced norm by $\|\cdot\|_{L^2_{p,e}}$. Since $\|e_k\|_{L^2_{p,e}}=\pi^{\frac{{N-1}}{2}}$, it follows that the set $\{\frac{e_k}{\pi^{\frac{{N-1}}{2}}},\:\,k\in \N^{N-1}\}$ forms  a complete orthonormal basis of $L^2_{p,e}$. Moreover,
$H^\s_{p,e} \subset L^2_{p,e}$ is characterized as the subspace of all functions $v \in L^2_{p,e}$ such that
$$
\sum_{k \in \N  ^{N-1}} (1+|k|^2)^{\s} \langle v, e_k \rangle_{L^2_{p,e}} ^2 < \infty.
$$
Therefore, $H^\s_{p,e}$ is also a Hilbert space with scalar product
\begin{equation}
  \label{eq:scp-hj}
(u,v) \mapsto  \sum_{k \in \N^{N-1}} (1+|k|^2)^{\s} \langle u, e_k \rangle_{L^2_{p,e}}  \langle v, e_k \rangle_{L^2_{p,e}}  \qquad \text{for $u,v \in H^\s_{p,e}$.}
\end{equation}

We  consider   the eigenspaces $V_\ell$ corresponding to the eigenvalues $\nu(\l\ell)$ of the operator $L_\l$,  defined as
\begin{equation}
  \label{eq:def-Vell}
V_\ell:=\textrm{span}\left\{  {e_k}  \;:\; |k|=\ell \right\} \,\subset \,\bigcap_{j \in \N} H^j_{p,e}.
\end{equation}
We also denote their orthogonal  $L^2$-complements by
$$
V_\ell^\perp:=\left\{ w\in L^{2}_{p,e}\,: \, \int_{[-\pi,\pi]^{N-1}}v(s)w(s)\,ds=0\quad\textrm{ for every $v\in V_\ell$}\right\}.
$$
Obviously, by Lemma \ref{lemmL0},
\be\label{eq:-Lv-Fpurier-slab}
 L_\l v =\nu(\l\ell) v \qquad \textrm{ for every $v\in V_\ell$}.
\ee
%
%We recall the  spaces  introduced in the first section,
%\begin{align}\label{eq:def-X-Y---cP-slab}
%&X_{\cP}:= \{\vp\in  X \::\: \vp(t_1,t_2)= \vp(t_2,t_1)  \text{ for all $(t_1,t_2) \in \R^2$} \},\nonumber\\
%&Y_{\cP}:= \{\vp \in  Y \::\:  \vp(t_1,t_2)= \vp(t_2,t_1)  \text{ for all $(t_1,t_2) \in \R^2$}\}.
%\end{align}
We note that
\be\label{eq:V_1-cap-XcP}
V_1\cap X_\cP=V_1\cap Y_\cP =\textrm{span}\{\ov{v}\}\qquad   \textrm{ with }\,\,\,\ov{v}(t)=\sum_{i=1}^{N-1} \cos(t_i).
\ee

The following proposition contains importants results that we will use  for the proof of Theorem  \ref{th:lemellae} in the next section. 
To apply the implicit  function theorem, we will need the following result.
\begin{proposition} \label{prop:All-need-to-CR}
 There exists a unique $\l_*>0$, only depending  on $\a$, $\b$ and $N$, such that the linear operator $L_*:= L_{\l_*}: X_\cP \to Y_\cP $  has the following properties.
\begin{itemize}
\item[(i)] The kernel $N(L_*)$ of $L_*$ is spanned by the function
  \begin{equation}
    \label{eq:def-v-0}
\ov{v} \in X_{\cP}, \qquad \ov{v}(t_1,\dots, t_{N-1})= \cos(t_1) +\dots + \cos(t_{N-1}).
  \end{equation}
  \item[(ii)]     $L_*: X_\cP\cap V_1^\perp  \to Y_\cP\cap V_1^\perp$ is an isomorphism.

\end{itemize}
Moreover
\begin{equation}
  \label{eq:transversality-cond}
\partial_\l \Bigl|_{\l= \l_*} L_\l \ov{v}=\nu'(\l_*)\ov{v} \not  \in Y_\cP\cap V_1^\perp.
\end{equation}
 \end{proposition}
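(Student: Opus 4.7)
All four claims of the proposition---existence and uniqueness of $\lambda_*$, the kernel description (i), the isomorphism (ii), and the transversality---will be extracted from the spectral analysis of Lemma \ref{lemmL0} together with Fredholm theory for the fractional-Laplacian-type operator $L_\lambda$.

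\textbf{Choice of $\lambda_*$.} By \eqref{eq:nu-prime-positive}, \eqref{eigen} and \eqref{assym}, the scalar function $\nu:(0,\infty)\to\R$ is smooth, strictly increasing, satisfies $\nu(0^+)=\nu_0<0$ and $\nu(R)\to+\infty$ as $R\to+\infty$. The intermediate value theorem together with strict monotonicity then produces a \emph{unique} $\lambda_*>0$ such that $\nu(\lambda_*)=0$; this $\lambda_*$ depends only on $N$ and $\alpha$.

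\textbf{Proof of (i).} Writing $v\in X_\cP\subset L^2_{p,e}$ in its Fourier expansion $v=\sum_{k\in\N^{N-1}}a_ke_k$, Lemma \ref{lemmL0} gives $L_*v=\sum_{k}a_k\nu(\lambda_*|k|)e_k$. Strict monotonicity of $\nu$ and $\nu(\lambda_*)=0$ make $\nu(\lambda_*|k|)$ vanish precisely when $|k|=1$; the case $k=0$ is handled by $\nu(0)=\nu_0<0$. So $L_*v=0$ forces $v\in V_1=\textrm{span}\{\cos s_i\}_{i=1}^{N-1}$, and permutation invariance equalizes all coefficients $a_{e_i}$, giving $v\in\textrm{span}\{\ov{v}\}$.

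\textbf{Proof of (ii).} Injectivity of $L_*$ on $X_\cP\cap V_1^\perp$ is immediate from the argument above, since any element of the kernel there would lie in $V_1\cap V_1^\perp=\{0\}$. For surjectivity I would argue by Fredholm theory. The operator decomposes as
\[
L_\lambda v(s)=c_{N,\alpha}\lambda^{1+\alpha}(-\Delta)^{(1+\alpha)/2}v(s)-K_\lambda v(s),
\]
where
\[
K_\lambda v(s):=2\lambda^{1+\alpha}\int_{\R^{N-1}}\frac{v(s)+v(s-t)}{(|t|^2+4\lambda^2)^{(N+\alpha)/2}}\,dt,
\]
the first summand being (up to a positive normalization) the fractional Laplacian of order $1+\alpha$ on $\R^{N-1}$. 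Hölder-Schauder theory for the fractional Laplacian (\cite{Sil,Fallll}) gives that $(-\Delta)^{(1+\alpha)/2}:C^{1,\beta}_{p,e}\to C^{0,\beta-\alpha}_{p,e}$ is bounded and Fredholm of index zero (kernel spanned by constants, range being the mean-zero subspace). The kernel of $K_\lambda$ is smooth and integrable, so $K_\lambda$ acts compactly between these Hölder spaces, and consequently $L_*:X_\cP\to Y_\cP$ is also Fredholm of index zero. By (i), $\ker L_*=\textrm{span}\{\ov v\}$ is one-dimensional, so the range of $L_*$ has codimension one in $Y_\cP$. To identify the range, observe that the integral kernels of $L_*$ are symmetric in $(s,y)$, so $L_*$ is $L^2_{p,e}$-formally self-adjoint: for any $v\in X_\cP$ and $w=c\ov v\in Y_\cP\cap V_1$,
\[
\int_{[-\pi,\pi]^{N-1}}L_*v\cdot w\,ds=\int_{[-\pi,\pi]^{N-1}}v\cdot L_*w\,ds=0
\]
because $L_*\ov v=0$. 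Hence $\textrm{range}(L_*)$ is $L^2$-orthogonal to $\ov v$. Since a permutation-invariant function $w\in Y_\cP$ is $L^2$-orthogonal to $\ov v$ if and only if $w\in V_1^\perp$, one concludes $\textrm{range}(L_*)\subseteq Y_\cP\cap V_1^\perp$, and a codimension count forces equality.

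\textbf{Transversality.} Since $\ov v=\sum_{i=1}^{N-1}e_{e_i}$ with $|e_i|=1$, \eqref{eq:L-lam-ek-eq-nuk-ek} gives $L_\lambda\ov v=\nu(\lambda)\ov v$ for every $\lambda>0$. Differentiating in $\lambda$ at $\lambda_*$ yields $\partial_\lambda\big|_{\lambda=\lambda_*}L_\lambda\ov v=\nu'(\lambda_*)\ov v$, which is nonzero by \eqref{eq:nu-prime-positive}. As a nonzero element of $V_1$, it cannot lie in $V_1^\perp$.

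\textbf{Main obstacle.} The technical heart is the surjectivity in (ii): translating the formal $L^2$ Fourier inversion into boundedness between Hölder spaces requires sharp regularity estimates for the fractional-Laplacian-type principal part of $L_\lambda$, with the exponents tuned so that the map $C^{1,\beta}\to C^{0,\beta-\alpha}$ is Fredholm, together with compactness of the lower-order term $K_\lambda$. Both inputs are standard and available in the Silvestre and Fall references already cited; the plan is to invoke them rather than rederive them.
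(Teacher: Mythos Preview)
Your argument for the choice of $\lambda_*$, for (i), and for the transversality condition matches the paper's proof essentially line for line.

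For (ii) you take a genuinely different route. The paper argues constructively: given $g\in Y_\cP\cap V_1^\perp$, it first solves $L_*w=g$ in the Sobolev space $H^{1+\alpha}_{p,e}$ via Fourier coefficients (using the asymptotics \eqref{eigen}), then rewrites the equation as $(-\Delta)^{(1+\alpha)/2}w-c_*w=P\star w+\tfrac{1}{2\lambda_*^{1+\alpha}}g$ and bootstraps regularity in two steps---first $P\star w\in L^\infty$ gives $w\in C^{0,\beta-\alpha}$ via \cite{Fallll}, then Silvestre's Schauder estimate \cite{Sil} lifts $w$ to $C^{1,\beta}$. You instead invoke Fredholm theory: decompose $L_*$ as (a multiple of) the fractional Laplacian minus a compact perturbation $K_\lambda$, conclude index zero, and then identify the range through $L^2$ self-adjointness of $L_*$. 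Your approach is cleaner structurally and avoids the explicit bootstrap, at the cost of packaging the same regularity input (Silvestre/Fall) inside the statement that $(-\Delta)^{(1+\alpha)/2}:C^{1,\beta}_{p,e}\to C^{0,\beta-\alpha}_{p,e}$ is Fredholm of index zero.

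One point to tighten: your claim ``the kernel of $K_\lambda$ is smooth and integrable, so $K_\lambda$ acts compactly'' glosses over the fact that $K_\lambda v=c_*v+2\lambda^{1+\alpha}P\star v$ contains a multiple of the identity, not just a smooth convolution. This is harmless---the inclusion $C^{1,\beta}_{p,e}\hookrightarrow C^{0,\beta-\alpha}_{p,e}$ is itself compact by Arzel\`a--Ascoli on the torus---but you should say so explicitly rather than leave it to the phrase ``smooth and integrable kernel''.
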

\begin{proof}
By Lemma~\ref{lemmL0}, there exists a unique $\l_*>0$ such that $\nu(\l_*)=0$ and $\nu'>0$ on $(0,+\infty)$. This with \eqref{eq:-Lv-Fpurier-slab} and \eqref{eq:V_1-cap-XcP} imply that $N(L_*)=\textrm{span}\{\ov{v}\}$.  This proves (i) and \eqref{eq:transversality-cond}.

To prove (ii) we  pick $g\in Y_\cP\cap V_1^\perp\subset L^2_{p,e} $. Then
by \eqref{eq:-Lv-Fpurier-slab}, the asymptotics \eqref{eigen}
and the definition of fractional Sobolev spaces in terms of Fourier coefficients,
we deduce that there exists a unique $w\in H^{1+\a}_{p,e}$ with
\be \label{eq:w-perp-ov-v}
\int_{[-\pi,\pi]^{N-1}} \ov{v}(s) w(s)\,ds = 0
\ee
  such that
\begin{equation}\label{invert}
L_*w =  g .
 \end{equation}
By the uniqueness, we also have $w(s )=w(\bp(s) )$, for every $s\in \R^{N-1}$ and $\bp\in \cP$.\\
We define $P:\R^{N-1}\to \R$ by
\begin{equation}\label{eq:deP}
P(s):= \frac{  1}{(|s|^2+4\l^2_*)^{(N+\a)/2}}
\end{equation}
and
$$
c_*:=\int_{\R^{N-1}} P(s)\, d s.
$$
Then \eqref{invert}   can be written as
$$
    PV\int_{\R^{N-1}} \frac{w(s)-w(s- t)}{| t|^{N+\a}}  \,d t=     c_* w(s)+P\star w (s)  + \frac{1}{2 \l^{1+\a}_*}g(s)  \qquad\text{for $s \in \R^{N-1}$,}
$$
where $\star$ denotes convolution product. This is equivalent to,
\begin{equation}\label{eq:fracLaplac}
(-\Delta)^{\sigma}w(s)-c_* w(s)=f(s) \qquad\text{for  $s \in \R^{N-1}$},
\end{equation}
where $\sigma:=\frac{1+\alpha}{2}$ and 
\begin{equation}\label{eq: deff}
f:=P\star w+\frac{1}{2 \l^{2\sigma}_*}g.
\end{equation}
We now use the results in \cite{Fallll}  and   \cite{Sil} to prove that $ w\in
C^{1,\b} (\R^{N-1})$,   $\beta\in (\alpha, 1)$. \\

We start by proving that  $P\star w \in L^{\infty}(\R^{N-1})$.  
Remark that the function $P$ defined in \eqref{eq:deP}  belongs  to $L^2(\R^{N-1})$ to   $w\in H^{1+\alpha}\subset L^2_{loc}\R^{N-1})$.  

Let $y\in  [-\pi, \pi]^{N-1}$. We have
\begin{align*}\label{eq: coparconvol}
|P\star w(y)|&\leq \sum_{k\in \Z^{N-1}} \int_{ [-\pi, \pi]^{N-1}}P(y-z+2\pi k)|w(z)|\textrm{d} z\\
&\leq\sum_{k\in \Z^{N-1}} \biggl(\int_{ [-\pi, \pi]^{N-1}}P^2(y-z+2\pi k)\textrm{d} z\biggl)^{1/2} ||w||_{L^2([-\pi, \pi]^{N-1})}.
\end{align*}

Since $y, z\in  [-\pi, \pi]^{N-1}$, it follows that
\begin{align*}
|y-z+2\pi k|^2+4\lambda_*^2\geq C(N)|k|^2,\quad  \textrm{for}  \quad |k|\geq2N,
\end{align*}
where $C(N)$ is a positive constant only depending on $N$. 

It is also plain  that $$|y-z+2\pi k|^2+4\lambda_*^2\geq 4\lambda_*^2\quad  \textrm{for all }  \quad k\in \Z^{N-1}.$$
Thus 
\begin{equation}\label{eq:norminfin}
|P\star w(y)| \leq C(N, \lambda_*, w, \alpha)\biggl( \sum_{k\in \Z^{N-1}, |k| <2N} + \sum_{k\in \Z^{N-1}, |k|\geq 2N}\frac{1}{|k|^{N+\alpha}}\biggl),
\end{equation}
where  $C(N, \lambda_*, w, \alpha)$ is a  positive constant depending   on $N$, $\alpha$, $w$ and $\lambda_*$. The function  $P\star w $ being  also $2\pi \Z^{N-1}$-periodic, we deduce from \eqref{eq:norminfin} that $P\star w \in L^{\infty}(\R^{N-1})$ and by  \cite{Fallll},  it follows that  $w\in  C^{0, \beta-\alpha}(\R^{N-1}).$ 

Next, we write   \eqref{eq:fracLaplac11}   on the form
\begin{equation}\label{eq:fracLaplac11}
(-\Delta)^{\sigma}w(s)=c_* w(s)+f(s) \qquad\text{for  $s \in \R^{N-1}$} ,
\end{equation}
 where $f$ is defined  by \eqref{eq: deff}.  Observe  also  that  $ P\star w \in C^{0,\beta- \alpha}(\R^{N-1})$, since $P\in L^1(\R^{N-1})$. We finally apply \cite[Proposition~2.1.8]{Sil} to get   $ w\in
C^{1,\b} (\R^{N-1})$, and \eqref{eq:w-perp-ov-v} yields   $w\in X_\cP\cap
V_1^\perp $  which completes the proof.
\end{proof}

\section{Proof of Theorem \ref{th:lemellae}}\label{s:exist-lamellae}
%Our aim is to solve the equation
%\be
%H_{E^\t_{u}}
%\ee
For $\t\in \R \setminus \{0\}$,  $u\in \cO$, with $\|u\|_{L^\infty(\R^{N-1})}<\frac{1}{2|\t|}$,   we  consider  the sets
\be
 \label{eq:E-tau-u-proof}
E^\t_{u}:= E_u+ \frac{1}{\t}e_N\Z=\bigcup_{q\in \Z}\left(   E_u+\left(0,\dots,\frac{q}{\t}\right)\right).
\ee
We recall that
$$
E_u=\{(s,\z) \in \R^{N-1}\times \R \, : \, |\z|<u (s)\}=\{(s,z u(s)) \in \R^{N-1}\times \R \, : \, |z|<1\} .
$$
 It is easy to see that $H_{E_{u}^\t}(x+\frac{q}{\t} e_N)=H_{E_{u}^\t}(x) $ for very $x\in \de E_u$ and $q\in \Z$.  Moreover,   it is clear that  in \eqref{eq:E-tau-u-proof}, we have  a disjoint union.

 \begin{proposition}\label{pro:lamel-perf-cnmc}
Let $\t\in \R \setminus \{0\}$,  $u\in \cO$ be such that  $\|u\|_{L^\infty(\R^{N-1})}<\frac{1}{2|\t|}$. Then
 for every $\th\in\{1,-1\}$ and $s\in \R^{N-1}$ , we have
     \begin{align}
 H_{E_{u}^\t}( F_{u}(s,\th ))= H(u)(s)
-2 \sum_{q\in \Z_* } \int_{ E_{1}}  \frac {u(s-t) \,d td z}{\{ |t|^2 +  ( u(s) -  u(s-t) z- \frac{q}{\t} )^2 \}^{(N+\alpha)/2}},
 \end{align}
  where   $\Z_*=\Z \setminus\{0\}$.
In particular,   if $|\l | <\frac{1}{2|\t|}$,  then $  E_{\l}^\t$ is a set with constant nonlocal  mean curvature.
\end{proposition}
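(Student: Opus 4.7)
My plan is to reduce the computation of $H_{E_u^\tau}$ at a boundary point of $E_u$ to the NMC of the single slab $E_u$ plus explicit correction terms coming from the other translates. First, observe that the assumption $\|u\|_{L^\infty(\R^{N-1})}<\frac{1}{2|\tau|}$ guarantees that the translates $E_u+\frac{q}{\tau}e_N$, $q\in\Z$, are pairwise disjoint, so that $E_u^\tau$ is indeed a disjoint union and
$$
\mathbf{1}_{E_u^\tau}(y)=\sum_{q\in\Z}\mathbf{1}_{E_u}\Bigl(y-\tfrac{q}{\tau}e_N\Bigr).
$$
This gives $\tau_{(E_u^\tau)^c}(y)=\tau_{E_u^c}(y)-2\sum_{q\in\Z_*}\mathbf{1}_{E_u+(q/\tau)e_N}(y)$, and inserting this identity into the principal-value definition \eqref{eq:def-frac-curvature} at the point $x=F_u(s,\theta)\in\partial E_u\subset\partial E_u^\tau$ yields
$$
H_{E_u^\tau}(x)=H_{E_u}(x)-2\sum_{q\in\Z_*}\int_{\R^N}\frac{\mathbf{1}_{E_u+(q/\tau)e_N}(y)}{|x-y|^{N+\alpha}}\,dy.
$$
For $q\neq 0$, the set $E_u+\frac{q}{\tau}e_N$ sits at distance at least $\frac{|q|}{|\tau|}-2\|u\|_{L^\infty(\R^{N-1})}>0$ from $x$, so the PV is irrelevant on those pieces, the integrals are ordinary Lebesgue integrals, and interchanging sum and integral is justified by a summable $|q|^{-N-\alpha}$-majorant.

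Next, I would rewrite each $q\neq0$ integral using the parametrization $F_u(s',z)=(s',u(s')z)$ of $E_u$ by $E_1$, whose Jacobian is $u(s')$, followed by the change of variables $s'\mapsto s-t$. With $x=(s,\theta u(s))$, this produces
$$
\int_{E_u+(q/\tau)e_N}\frac{dw}{|x-w|^{N+\alpha}}=\int_{E_1}\frac{u(s-t)\,dt\,dz}{\bigl\{|t|^2+(\theta u(s)-u(s-t)z-\tfrac{q}{\tau})^2\bigr\}^{(N+\alpha)/2}}.
$$
Since $H_{E_u}(x)=H(u)(s)$ by definition of the operator $H$ from Section \ref{s:NMC-ope} (together with Lemma \ref{lem:geomNMC}), the case $\theta=1$ is already the claimed identity. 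For $\theta=-1$, I would apply the joint change of variables $(z,q)\mapsto(-z,-q)$, which is a bijection of $(-1,1)\times\Z_*$ onto itself and converts $(-u(s)-u(s-t)z-\tfrac{q}{\tau})^2$ into $(u(s)-u(s-t)z-\tfrac{q}{\tau})^2$; this shows that the right-hand side is $\theta$-independent and finishes the general formula.

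The ``in particular'' part is then immediate: for $u\equiv\lambda$ with $|\lambda|<\frac{1}{2|\tau|}$, the term $H(\lambda)(s)$ and each integral
$$
\int_{E_1}\frac{\lambda\,dt\,dz}{\bigl\{|t|^2+(\lambda-\lambda z-\tfrac{q}{\tau})^2\bigr\}^{(N+\alpha)/2}}
$$
are visibly independent of $s$, so $H_{E_\lambda^\tau}$ is constant on $\partial E_\lambda^\tau$.

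The step I expect to be the most delicate is the rigorous splitting of the principal-value integral into the $q=0$ singular piece (producing $H_{E_u}(x)$) and the $q\neq 0$ absolutely convergent pieces. The disjointness hypothesis $\|u\|_{L^\infty(\R^{N-1})}<\frac{1}{2|\tau|}$ is crucial here on two counts: it makes $\partial E_u$ a bona fide connected component of $\partial E_u^\tau$ at which $H_{E_u^\tau}(x)$ is meaningful, and it provides the quantitative separation used both to drop the PV on the $q\neq 0$ translates and to obtain the geometric-type bound $|q/\tau|^{-(N+\alpha)}$ needed for summability.
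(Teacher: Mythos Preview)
Your argument is correct, but it takes a different route from the paper's proof. The paper starts from the \emph{geometric} expression \eqref{geometric-0} for $H_{E_u^\t}$, splits the boundary integral as a sum over the translated boundaries $\partial E_u+\frac{q}{\t}e_N$, identifies the $q=0$ term with $H(u)(s)$, and for $q\neq 0$ applies the divergence theorem to each absolutely convergent surface integral (using $\nabla_y\cdot\{(x-y-\frac{q}{\t}e_N)|x-y-\frac{q}{\t}e_N|^{-(N+\alpha)}\}=\alpha|x-y-\frac{q}{\t}e_N|^{-(N+\alpha)}$) to convert it into a volume integral over $E_u$; only then does it parametrize $E_u$ by $F_u$ and change variables. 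The $\theta$-independence is obtained there by the same change $(z,q)\mapsto(-z,-q)$ as in your argument.

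Your approach, by contrast, works directly with the principal-value definition \eqref{eq:def-frac-curvature}, splits the characteristic function $\mathbf{1}_{E_u^\t}$ over the translates, and lands immediately on volume integrals without ever passing through the surface form or the divergence theorem. This is more elementary and slightly shorter; the trade-off is the ``delicate step'' you correctly flagged, namely justifying that the PV integral of $\tau_{(E_u^\t)^c}$ splits into the PV integral of $\tau_{E_u^c}$ plus an absolutely convergent remainder. The paper's route sidesteps this by staying with the absolutely convergent geometric form throughout, which is why no PV-splitting issue appears there. Both arguments rely on the hypothesis $\|u\|_{L^\infty}<\frac{1}{2|\t|}$ in the same way (disjointness of translates and summable separation $\sim|q/\t|$), and both invoke Lemma~\ref{lem:geomNMC} for the $\theta$-independence of $H_{E_u}$.
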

  \begin{proof}
%It is clear that $E^\t_{u}$ is a $C^{1,\b}(\R^2)$ set which is $2\pi\Z^2\times  \frac{1}{\t}\Z$-periodic, provided $\|u\|_{L^\infty(\R^2)}<1/2\t$. %Moreover,  for for every $\l\in (0,\frac{1}{2\t})$, the set $\de E^\t_\l$ is  a CNMC surface.
Let $s\in \R^{N-1}$ and $\th\in \{-1,1\}$. Then putting  $x= F_{u}(s,\th )$, we then have
 \begin{align*}
  H_{E_{u}^\t}( F_{u}(s,\th ))
& =-\frac{2}{\alpha} \int_{\partial E_{u}^\t} \frac{(x-y)\cdot\nu_{E_{u}^\t}(y)}{{|x-y|^{N+\alpha}} }\,dy\\
&= -\frac{2}{\alpha} \sum_{q\in \Z } \int_{\partial E_{u}}  \frac {(x-y-\frac{q}{\t}e_N)\cdot\nu_{E_{u}}(y)}{|x-y -\frac{q}{\t}e_N |^{N+\alpha}}dy.
%
%&=  -\frac{2}{\alpha} \sum_{q\in \Z } \int_{\partial E_{u}}  \frac {(x-y-\frac{q}{\t}e_N)\cdot\nu_{E_{u}}(y)}{|x-y -\frac{q}{\t}e_N |^{N+\alpha}}dy.
 \end{align*}
 This implies that
  \begin{align*}
  H_{E_{u}^\t}( F_{u}(s,\th ))=  H(u)(s)
 -\frac{2}{\alpha}\sum_{q\in \Z_* } \int_{\partial E_{u}}  \frac {(x-y-\frac{q}{\t}e_N)\cdot\nu_{E_{u}}(y)}{|x-y -\frac{q}{\t}e_N |^{N+\alpha}}dy.
 \end{align*}
 Using the fact that
 $$\nabla_y\cdot\left\{(x-y- \frac{q}{\t}e_N )|x-y-\frac{q}{\t}e_N |^{-(N+\alpha)}\right\}=\alpha |x-y- \frac{q}{\t}e_N |^{-(N+\alpha)}$$
and the divergence theorem, we get
  \begin{align*}
 H_{E_{u}^\t}( F_{u}(s,\th ))=  H(u)(s)
-2 \sum_{q\in \Z_* } \int_{ E_{u}}  \frac {1}{|x-y -\frac{q}{\t}e_N |^{N+\alpha}}dy,
 \end{align*}
 where we have used the fact that $y\mapsto  |x-y- \frac{q}{\t}e_N |^{-(N+\alpha)}$ is smooth in $E_u$ for every $x\in \de E_u$.\\
 By change of  variables $y=F_u(\ov s,z)=(\ov s, z u(\ov s))$, we deduce that
$$
  H_{E_{u}^\t}( F_{u}(s,\th ))=  H(u)(s)
-2 \sum_{q\in \Z_* } \int_{ E_{1}}  \frac { u(\ov{s}) \, d\bar{s} d z}{|F_{u}(s,\th)  -F_{u}(\bar{s}, z) -\frac{q}{\t}e_N |^{N+\alpha}},
$$
 recalling that $E_1=\{(s,z)\in \R^{N-1}\times \R\,:\, |z|<1\}=\R^{N-1}\times (-1,1)$.
Since
$$
|F_{u}(s,\th)-F_{u}(\bar{s},z)-\frac{q}{\t}e_N|^2 = |s-\bar s|^2 + | u(s)\th -  u(\bar s) z-\frac{q}{\t} |^2,
$$
it follows that
$$
 H_{E_{u}^\t}( F_{u}(s,\th ))= H(u)(s)
-2  \sum_{q\in \Z_* } \int_{ E_{1}}  \frac {u(s-t) \,dt d z}{\{ |t|^2 + (u(s)\th -  u(s-t) z- \frac{q}{\t})^2\}^{(N+\alpha)/2}},
$$
 where we made the change of variable $\ov s= s-t $.
 By the change of variables $z\to-z$ and using that $\Z_*=-\Z_*$, we get
$$
 H_{E_{u}^\t}( F_{u}(s,\th ))= H(u)(s)
-2 \sum_{q\in \Z_* } \int_{ E_{1}}  \frac {u(s-t) \,d td z}{\{ |t|^2 +  ( u(s)\th +  u(s-t) z+ \frac{q}{\t} )^2 \}^{(N+\alpha)/2}}.
$$
From the two identities above, it is easy to see that
$$
H_{E_{u}^\t}( F_{u}(s,1 )) =  H_{E_{u}^\t}( F_{u}(s, -1 )) \qquad\textrm{ for every $s\in \R^{N-1}$.}
$$
This  implies, in particular, that  if $u \equiv \l$, a constant, then $  E_{\l}^\t$ is a CNMC set.
  \end{proof}

We will consider in the following of this section, the parameter $\l_*>0$ given by Proposition  \ref{prop:All-need-to-CR} and we put
$$
\t_1:=\frac{1 }{6 \l_*}.
$$
Then, for all $ \t\in (-\t_1,\t_1)$ and  $u\in B_{X_\cP}(\l_*, \l_*/2)\subset\cO\cap X_\cP$, we have
\be \label{eq:lowerboun-integrand-cH}
| u(s)  +  u(s-t) z- \frac{q}{\t}| \geq  \frac{|q|}{2\t_1} \qquad   \textrm{ for all $z\in [-1,1]$ and all $s,t\in \R^{N-1}$.  }
\ee
We  define  the map
$$
\cH:  (-\t_1,\t_1)\times B_{X_\cP}(\l_*, \l_*/2)\to  Y_\cP\in  C^{0,\b-\a}(\R^{N-1})
$$
given by, for every $ u\in B_{X_\cP}(\l_*,  {\l_*}/{2})$,
\begin{align*}
&\cH(\t, u)(s)
&= \begin{cases}\displaystyle\sum_{q\in \Z_* } \int_{ E_{1}}  \frac {u(s-t) \,d t d z}{\{ |t|^2 + (  u(s) -  u(s-t) z - {q}/\t )^2\}^{\frac{N+\alpha}{2}}}& \quad \textrm{if $\t\neq 0$, }  \vspace{3mm}\\
0 & \quad \textrm{if $\t= 0$. }
\end{cases}
\end{align*}

This implies that,  for every $\t\in  (-\t_1,\t_1)\setminus\{0\}$, $u\in B_{X_\cP}(\l_*,\l_*/2)$, $\th\in \{-1,1\}$ and $s\in \R^{N-1}$,
\be
 H_{E_{u}^\t}( F_{u}(s,\th ))= H(u)(s)- 2 \cH(\t, u)(s).
\ee
In the following result, we prove regularity estimates for the map $\cH$.
\begin{proposition}\label{lem:diff-cH-t-u}
  The maps $\cH$  is of class  $C^1$ on   $ (-\t_1,\t_1)\times B_{X_\cP}(\l_*, \l_*/2) $. Moreover  there exists a constant $c=c(N, \a,\b)>0$  such that for every $u\in B_{X_\cP}(\l_*, \l_*/2) $ and    $\t\in (-\t_1,\t_1)$,
\be
\|   \cH(\t, u)\|_{C^{0,\b-\a}(\R^{N-1})} \leq c   | \t|^{1+\a}.
\ee
In addition,  for every $v\in X_\cP$
\begin{align}
 D_u \cH(\t, u)[v](s)&=-\sum_{q\in \Z_* } \int_{\R^{N-1}}\frac{v(s)-v(s-t)}{(|t|^2+(u(s)-u(s-t)-q/\t)^2)^{\frac{N+\a}{2}}}\, dt \nonumber\\
 & \quad+ \sum_{q\in \Z_* } \int_{\R^{N-1}} \frac{v(s)+v(s-t)}{(|t|^2+(u(s)+u(s-t)-q/\t)^2)^{\frac{N+\a}{2}}}\, dt
 \label{eq:Du-cH-tau}
\end{align}
and
\be
\| D_u \cH(\t, u)[v]\|_{C^{0,\b-\a}(\R^{N-1})} \leq c  \|v\|_{C^{1,\b}(\R^{N-1})} | \t|^{1+\a}.
\ee

\end{proposition}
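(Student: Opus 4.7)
The plan is to exploit the uniform lower bound \eqref{eq:lowerboun-integrand-cH}, which guarantees that for every $q\in \Z_*$ the denominator $\{|t|^2+(u(s)-u(s-t)z-q/\t)^2\}^{(N+\a)/2}$ is bounded below by $\{|t|^2+(|q|/(2|\t|))^2\}^{(N+\a)/2}$. Consequently, every integrand appearing in $\cH(\t,u)$ and its candidate derivatives is smooth in $s$ and in $u$, with no principal value required; the entire analysis reduces to controlling the sum in $q$ together with the decay of each integral in $|t|$.

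For the $L^\infty$ part of item (i), a change of variables gives $\int_{\R^{N-1}}\{|t|^2+(|q|/(2|\t|))^2\}^{-(N+\a)/2}\,dt \leq c\,|\t|^{1+\a}/|q|^{1+\a}$, and summing over $q\in\Z_*$ (using $\sum |q|^{-1-\a}<\infty$) yields $\|\cH(\t,u)\|_{L^\infty}\leq c|\t|^{1+\a}$. To upgrade this to $C^{0,\b-\a}$, I differentiate under the sum and the integral once in $s$: the $s$-derivative of the integrand is bounded by $c/D^{(N+\a)/2}+c|A_q|/D^{(N+\a+2)/2}$, with $D=|t|^2+A_q^2$ and $A_q=u(s)-u(s-t)z-q/\t$, and both terms produce bounds of the same form as above. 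This gives a uniform Lipschitz bound on $\cH(\t,u)$, which is stronger than the required H\"older estimate.

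To establish \eqref{eq:Du-cH-tau}, I first rewrite $\cH(\t,u)$ by reversing the divergence-theorem step used in Proposition~\ref{pro:lamel-perf-cnmc}: each summand equals, up to the factor $-1/\a$, the boundary integral $\int_{\de E_u}(x-y-(q/\t)e_N)\cdot\nu_{E_u}(y)\,|x-y-(q/\t)e_N|^{-N-\a}\,dy$ with $x=F_u(s,1)$. Parametrizing $\de E_u$ by $F_u(\cdot,\pm 1)$ exactly as in the proof of Lemma~\ref{lem:geomNMC} produces a representation of $\cH(\t,u)$ in the same geometric form as \eqref{eq:Geom-from-NMC}, with each $u(s)\mp u(s-t)$ replaced by $u(s)\mp u(s-t)-q/\t$ and a summation in $q\in\Z_*$ outside. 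Differentiating this representation in $u$ along $v$ and applying the same integration by parts in $t$ that leads from \eqref{eq:Geom-from-NMC} to \eqref{eq:dHuv} eliminates the $t\cdot\nabla v(s-t)$ terms and yields exactly \eqref{eq:Du-cH-tau}. The bound on $\|D_u\cH(\t,u)[v]\|_{C^{0,\b-\a}(\R^{N-1})}$ then follows from the scheme of item (i), now using $|v(s)\pm v(s-t)|\leq 2\|v\|_{L^\infty(\R^{N-1})}\leq 2\|v\|_{C^{1,\b}(\R^{N-1})}$ in the numerator. Continuity of both partial derivatives of $\cH$ in $(\t,u)$, including at $\t=0$, follows from dominated convergence combined with the $|\t|^{1+\a}$ factor in the estimates (so that $\partial_\t\cH(0,u)=0$ and $D_u\cH(0,u)=0$ are consistent with the stated smoothness at the boundary point $\t=0$).

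The main obstacle is the derivation of the simplified formula \eqref{eq:Du-cH-tau}: naive differentiation of the volume-integral definition of $\cH$ produces an expression involving $u(s-t)A_q(v(s)-v(s-t)z)$ in the numerator and $D^{(N+\a+2)/2}$ in the denominator, which does not immediately match \eqref{eq:Du-cH-tau}. Passing from this raw expression to the compact form requires routing through the boundary-integral representation and reproducing, term by term, the same integration by parts that underlies the analogous transition in Proposition~\ref{lem:diff-H}; once this reformulation is carried out, every remaining bound is a direct consequence of the uniform separation \eqref{eq:lowerboun-integrand-cH}.
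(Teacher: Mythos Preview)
Your treatment of the $C^{0,\b-\a}$ bounds on $\cH$ and on $D_u\cH$ is essentially the paper's: use the separation \eqref{eq:lowerboun-integrand-cH} to dominate each summand by $(|t|^2+q^2/(4\t^2))^{-(N+\a)/2}$, rescale to pull out $|\t|^{1+\a}|q|^{-1-\a}$, and sum in $q$. The paper gets the H\"older part by the mean value theorem in $s$, which is the same as your differentiation-in-$s$ argument.

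Where your route diverges from the paper is the derivation of the formula \eqref{eq:Du-cH-tau}. You propose to undo the divergence theorem, rewrite each summand as a boundary integral, pass to the geometric parametrization analogous to \eqref{eq:Geom-from-NMC}, differentiate in $u$, and then perform an ``integration by parts in $t$'' to kill the $t\cdot\nabla v(s-t)$ terms. The paper does none of this. It stays with the volume integral over $E_1=\R^{N-1}\times(-1,1)$, differentiates directly in $u$ along $v$, obtains
\[
D_u\cH(\t,u)[v](s)=\sum_{q\in\Z_*}\int_{\R^{N-1}}\int_{-1}^1\Bigl(v(s-t)\,G(z)-\{v(s)-v(s-t)z\}\,G'(z)\Bigr)\,dz\,dt,
\]
with $G(z)=(|t|^2+(u(s)-u(s-t)z-q/\t)^2)^{-(N+\a)/2}$, and then integrates by parts \emph{in the fibre variable $z$} over $[-1,1]$. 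The boundary terms at $z=\pm1$ give precisely the two kernels in \eqref{eq:Du-cH-tau}, and the bulk term $\int_{-1}^1 v(s-t)G(z)\,dz$ cancels. This is a two-line computation; no boundary representation, no geometric form, and no integration by parts in $t$ are needed. Note also that the transition from \eqref{eq:Geom-from-NMC} to \eqref{eq:dHuv} in the paper (Lemma~\ref{lem:linearized-NMC}) is itself carried out via the volume integral $h_{\e,\varrho}$ and integration by parts in $z$, not in $t$; so the mechanism you invoke is not the one the paper actually uses there either. Your detour could in principle be made to work, but it is substantially longer and the ``integration by parts in $t$'' step you rely on would need its own justification.
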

\begin{proof}
By \eqref{eq:lowerboun-integrand-cH},  it is   easy to see that $  \cH(\t, u)$ is of class $C^\infty$ in $ (-\t_1,\t_1)\setminus\{0\}\times B_{X_\cP}(\l_*, \l_*/2) $.   Also by \eqref{eq:lowerboun-integrand-cH},  there exists a constant $c>0$,   such that for every $u\in B_{X_\cP}(\l_*,\l_*/2) $,    $\t\in (-\t_1,\t_1)$ and $s\in \R^{ N-1}$,
\begin{align*}
| \cH(\t, u)(s)|& \leq  c  \sum_{q\in \Z_* }  \int_{ \R^{N-1}} \frac {   dt  }{\{ |t|^2 +  \frac{q^2}{\t^2}\}^{\frac{N+\alpha}{2}}}\|u\|_{C^{1,\b}(\R^{N-1})}\\
&\leq c   | \t|^{1+\a}\sum_{q\in \Z_* } | q|^{-1-\a} \|u\|_{C^{1,\b}(\R^{N-1})} \\
&\leq  c   | \t|^{1+\a}\|u\|_{C^{1,\b}(\R^{N-1})}.
\end{align*}
In addition, by the mean value property, we also have
\begin{align*}
|  \cH(\t, u)(s)- \cH(\t, u)(\ov s)|& \leq   c   | \t|^{1+\a} |s-\ov s| + c |s-\ov s|    \sum_{q\in \Z_* }  \int_{ \R^{N-1}} \frac {   \frac{|q|}{|\t|} dt  }{\{ |t|^2 +  \frac{q^2}{\t^2}\}^{\frac{N+2+\alpha}{2}}}\\
&\leq   c   |s-\ov s| | \t|^{1+\a} .
\end{align*}
We therefore get
\be \label{eq:est-cH-t-1plus-alph}
\|  \cH(\t, u) \|_{C^{0,\b-\a}(\R^{N-1})} \leq c    | \t|^{1+\a}.
\ee
Now,  letting $V_\e=u+\e v$, we have
\begin{align}
D_u  \cH(\t, u)[v](s)=\frac{d \cH(\t, u) (V_\e)}{d\e}\Big|_{\e=0}(s)&= \sum_{q\in \Z_* } \int_{\R^{N-1}}v(s-t) \int_{-1 }^1  G(z) \, dz   d t \nonumber\\
&-  \sum_{q\in \Z_* }\int_{\R^{N-1}}      \int_{-1 }^1\left\{ v(s)- v(s-t) z   \right\}  G'(z)    dz d t,  \label{eq:G-prim}
\end{align}
where
$$
G(z)=\frac{1}{(|t|^2+ ( u(s) -u(s-t) z- q/\t)^2  )^{\frac{N+\a}{2}}},
$$
which satisfies
 $$
G'(z)=( N+\a)\frac{u(s-t) ( u(s)-u(s-t) z -q/\t)}{(|t|^2+ (u(s)-u(s-t) z - q/\t)^2  )^{\frac{N+2+\a}{2}}}.
 $$
Using integration by parts in the last integral of \eqref{eq:G-prim},    we deduce that
\begin{align*}
D_u\cH(\t,u)[v](s)&= -  \sum_{q\in \Z_* }\int_{\R^{N-1}}    \Bigl(   \left\{v(s)- v(s-t)   \right\} G(1)   +  \left\{ v(s)+v(s-t)    \right\}  G(-1) \Bigr) dt\\
&=  -  \sum_{q\in \Z_* } \int_{\R^{N-1}} \frac{v(s)-v(s-t) }{(|t|^2+ ( u(s) -u(s-t) -q/\t )^2  )^{\frac{N+\a}{2}}}   d t\\
 &\quad+  \sum_{q\in \Z_* }\int_{\R^{N-1}} \frac{v(s)+v(s-t) }{(|t|^2+ ( u(s) +u(s-t)-q/\t )^2  )^{\frac{N+\a}{2}}}   d t.
\end{align*}
This gives \eqref{eq:Du-cH-tau}.
Moreover, it is easy to derive, from the above expression of $ D_u \cH(\t, u)$ and similar arguments as above, the estimate
$$
\| D_u \cH(\t, u)[v]\|_{C^{0,\b-\a}(\R^{N-1})} \leq c   \|v\|_{C^{1,\b}(\R^{N-1})} | \t|^{1+\a}.
$$
This and \eqref{eq:est-cH-t-1plus-alph},  imply that   $\cH$    is  of class $C^1$ in $ (-\t_1,\t_1)\times B_{X_\cP}(\l_*, \l_*/2)$.
\end{proof}
%
%%%%%%%%%%%%%%%%%%%%%%%%%%%%

 We are now in position to complete the proof of Theorem \ref{th:lemellae}. The argument we use is inspired  by the proof of the Crandall-Rabinowitz bifurcation theorem, \cite{CR-ARMA, Crandall1971}.
 \proof[Proof of Theorem  \ref{th:lemellae} completed]

  Our aim is to solve the equation
\begin{equation}\label{eq:with tau}
 H(\l + \vp)-2\cH(\t, \l+\vp)= H(\l)-2 \cH(\t,\l) \quad\textrm{  on $Y_\cP$,}
\end{equation}

For this, we define  the map 
$ \Psi: (-\t_1,\t_1)\times(3\l_*/4,5\l_*/4)\times B_{X_\cP}(0, \l_*/4)\to Y_\cP$,  given by
$$\Psi(\t, \l, \vp):= \l^{1+\a}\left\{  H(\l+\vp)-2 \cH(\t, \l+\vp )  \right\}$$  and  $$\overline{\Psi} (\t, \l, \vp):=\Psi(\t, \l, \vp)-\Psi(\t, \l, 0)$$  so that \eqref{eq:with tau}  becomes equivalent to 
\begin{equation}\label{eq:with tauaa}
 \overline{\Psi}(\t, \l, \vp)=0 \quad\textrm{  on $Y_\cP$}.
\end{equation}
It is clear that $\Psi$  and $\overline{\Psi}$  are
of class $C^1$ by Proposition~\ref{lem:diff-cH-t-u}.
We shall use the Implicit Function Theorem  to find a solution of the form $$\vp(s)= b (\ov{v}(s)+ v(s)),$$
where $\ov v$ is defined in \eqref{eq:V_1-cap-XcP}.
We will apply this theorem to   the new $C^1$-map
$$ f: (-\t_1,\t_1)\times  (-\beta_*,\beta_*)\times (3\l_*/4,5\l_*/4)\times B_{X_\cP}(0, \gamma_*)\cap  (X_\cP\cap V_1^\perp )\to Y_\cP,$$ given by
$$
f(\t, b,  \lambda, v):=
\left\{
  \begin{aligned}
 & \frac{ \overline{\Psi}(\t, \l, b (\ov{v}+v ))}{b}\quad  \textrm{if}\quad b\ne 0\\
 & D_\varphi \overline{\Psi}( \t, \lambda, 0)(\ov{v}+v)\quad  \textrm{if}\quad b=0,
  \end{aligned}
\right.
$$
where $$\gamma_*:=\|\ov v\|_{C^{1,\b}(\R^{N-1})} \quad \textrm{and}\quad \beta_*:=\frac{\l_*}{8\gamma_*}.$$   
Observe that $$\overline{\Psi}(0, \l, \vp)= \l^{1+\a}\left[H(\l+\vp)-H(\l)\right].$$ and $$f(0,0, \l, v)=D_\varphi \overline{\Psi}( 0, \lambda, 0)(\ov{v}+v)=\l^{1+\a} DH(\l)(\ov{v}+v)=L_{\l}(\ov{v}+v).$$
In fact, we have the following properties of the map $f$:
 \begin{itemize}
 \item[1.]  $f(0,0, \l_*, 0)=L_{\l_*}(\ov{v})=0.$
 \item[2.] 
The differential  of  $(\l, v)\mapsto   f(0,0, \l, v)$ at $(\l_*, 0)$ is the mapping
$$
G: \R \times (X_\cP\cap V_1^\perp) \rightarrow Y_\cP, \quad (\gamma, z) \mapsto L_{\l_*}(z)+ \gamma\partial_\l \Bigl|_{\l= \l_*} L_\l \ov{v}=L_{\l_*}(z)+\gamma \nu'(\l_*)\ov{v},
$$
where we have used \eqref{eq:transversality-cond}.
The mapping $G$ is an isomorphism by Proposition \ref{prop:All-need-to-CR} and its inverse is given by
$$ y \mapsto \biggl( \frac{(y,  \ov{v})}{\nu'(\l_*)||\ov{v}||^2}, \quad L^{-1}_{\l_*} \biggl[y-\frac{(y, \ov{v})}{||\ov{v}||^2} \ov{v}\biggl] \biggl).$$
 \end{itemize}
 
We   apply the implicit function theorem to  get  the existence of positives constants $\t_0,b_0>0$,
 only depending on $\a,\b,\l_*$, and unique $C^1$  curves
\begin{align*}
&(-\t_0,\t_0)\times (-b_0, b_0) \to (0,\infty) \times (X_\cP\cap V_1^\perp), \qquad (\t,b) \mapsto (\l_{\t,b}, v_{\t, b})
\end{align*}
such that for every $(\t,b)\in
(-\t_0,\t_0)\times (-b_0, b_0)$ and $b\ne 0,$  
\begin{equation}\label{eqimplicit}
\overline{\Psi}(\t, \l_{\t,b}, b (\ov{v}+v_{\t, b} ))=0.
\end{equation} 

Now,  for $b\ne 0$ and $s\in \R^{N-1}$
\begin{align*}
f(\t, b,  \l_*, 0)(s)&= \frac{ \overline{\Psi}(\t, \l, b \ov{v})}{b}(s)\\
&=  \int_0^1  D_\vp \overline{\Psi}(\t, \l_*,b \varrho \ov{v}) [\ov{v} ](s)\, d\varrho \\
&=  \l^{1+\a}_*\int_0^1 \left\{    DH(\l_* +\varrho  b\ov{v})[\ov
v](s)-2 D_u\cH(\t, \l_*+\varrho b\ov{v} )[\ov v](s)   \right\}\,
d\varrho.
\end{align*}
Therefore by Proposition  \ref{lem:diff-H}  and  Proposition
\ref{lem:diff-cH-t-u}, we get
$$
\| f(\t, b,  \l_*, 0)\|_{C^{0,\b-\a}(\R^{N-1})}\leq c
(|\t|^{1+\a}+|b|).
$$
It then follows that
$$
|\l_{\t,b}-\l_* |+ \| v_{\t,b}\|_{C^{1,\b}(\R^{N-1})}\leq  c
(|\t|^{1+\a}+|b|).
$$
Finaly, letting $w_{\t,b}:=\l_{\t,b}+b (\ov v+ v_{\t,b})$,
we thus get properties (i)-(iii) of Theorem \ref{th:lemellae}. \\

Finally, notice that  \eqref{eqimplicit} yields in particular,  $$ \l_{0,0}=\l_* \quad \textrm{and} \quad v_{0, 0}=0.$$   Moreover,  puting 
$\l_b:=\l_{0,b}, \quad v_{b}: =v_{0, b}  \quad \textrm{and} \quad u_{b}:=\l_{b}+b (\ov{v}+v_{b}),$  we get with \eqref{eq:with tau} that 
\begin{equation}\label{eq:with tauu}
 H[\l_b+ b (\ov{v}+v_{b} )]= H(\l_b)\quad\textrm{  on $Y_\cP$,}
\end{equation}
which shows that  for all $b \in (-b_0,b_0)$, the domains  $$ E_{u_b}=\{(s,z)\in \R^{N-1}\times \R \, : \, |z|<u_b (s)\}$$  are  constant nonlocal mean curvature sets, with
 $$H_{E_{u_b}}(x)=H_{E_{\l(b)}} \qquad\textrm{ for every $x\in \de E_{u_b}$}.$$ The proof of  Corollary  \ref{th:lemellae22} is also completed. 
\section{Regularity of the NMC operator} \label{sec:preparations-1}
The purpose of this section is to  prove the regularity of the NMC operator  and to derive the result of Proposition~\ref{lem:diff-H}.

The  proof of the diffrentiability of the NMC operator is inspired by \cite{CFW}. In \cite{CFW} and \cite{Cabre2015B}, the linearized NMC operators were computed  only at constant functions. Here, our arguments allow  for the computation of the linearized operator at any function. We believe that this might be of interest for the study of certain qualitatives  of our CNMC hypersurfaces,  e.g. the stability or global bifurcation branch.\\

We  recall that
\begin{align}
  -\frac{\alpha}{2}  H({u})(s) =&    \int_{\R^{N-1}}
  \frac{
  u(s)-u(s-t)-t \cdot \n u(s-t)
}{
\{|t|^2+({u}(s)-{u}(s-t))^2  \}^{(N+\a)/2}
}
d t\label{eq:Geom-from-NMC1} \\
& -   \int_{\R^{N-1}}
  \frac{
  u(s)+u(s-t)+ t \cdot \n u(s-t)
}{
\{|t|^2+({u}(s)+{u}(s-t))^2  \}^{(N+\a)/2}
}
d t  .  \nonumber
\end{align}
We will need to consider the family of maps $H_\e :\cO\to C^{0, \b-\a}(\R^{N-1})$, for $\e\geq0$,  given by
\begin{align}
  -\frac{\alpha}{2}  H_\e({u})(s) =&    \int_{\R^{N-1}}
  \frac{
  u(s)-u(s- t)- t \cdot \n u(s-t)
}{
\{| t|^2+({u}(s)-{u}(s-t))^2 +\e^2 \}^{(N+\a)/2}
}
d t \label{eq:Geom-from-NMC-eps} \\
& -   \int_{\R^{N-1}}
  \frac{
  u(s)+u(s- t)+  t \cdot \n u(s- t)
}{
\{| t|^2+({u}(s)+{u}(s- t))^2 +\e^2 \}^{(N+\a)/2}
}
d t  .  \nonumber
\end{align}
Here, we notice that  by  a  change of variables formula (see e.g. the proof of Lemma  \ref{lem:geomNMC}), we  have
\be \label{eq:NMC-geom-eps}
 -\frac{\alpha}{2} H_\e(u)(s) =  \int_{\partial E_u} \frac{(x-y)\cdot \nu_{E_u}(y)}{(|x-y|^2+\e^2)^{(N+\alpha)/2}} \,dy \quad \textrm{ for $x=(s,u(s)  )=F_u(s,1)$}.
\ee
We make the obvious observation that   to prove the regularity of $H$,    it suffices to consider $\d>0$ and to prove the regularity of
$$
H_\e: \cO_\d \to C^{0, \b-\a}(\R^{N-1}), \qquad \text{where $\cO_\d:= \left\{u \in C^{1,\beta}(\R^{N-1})\::\: \inf_{\R^{N-1}} u > \delta\right\}$,}
$$
for every $\e\geq0$.
We will prove uniform estimates (with respect to $\e$) for the higher derivatives of $H_\e$. This will allow us to get the general   expression for first  derivative of $H$  in Proposition \ref{lem:diff-H}.  This  will be completed in Section \ref{ss:derivatives-of-H} below.

We fix some notations that will be used throughout this section. For $i=1,2,3,4$,   we define the  maps $\L_i: C^{1,\b}(\R^{N-1})\times\R^{N-1}\times\R^{N-1} \to \R $, by
$$
\L_1(\phi,s,t) = \frac{\phi(s)-\phi(s-t)}{|t|} = \int_0^1 \n \phi (s-\rho  t)\cdot \frac{t}{|t|} d\rho ,
$$
\be \label{eq:def-Lamb-2}
\L_2(\phi, s,t)= \L_1(\phi,s,t)-  \n \phi (s-t)\cdot \frac{t}{|t|}  =  \int_0^1(\n \phi(s-\rho  t)-\n \phi(s- t) )\cdot \frac{t}{|t|} d\rho,
\ee
\be  \label{eq:def-Lamb-3}
\L_3(\phi, s,t)= \phi(s)+\phi(s-t)
\ee
and
\be \label{eq:def-Lamb-4}
\L_4(\phi, s,t)= \phi(s)+\phi(s-t) + t\cdot \n \phi(s-t).
\ee
Therefore \eqref{eq:Geom-from-NMC-eps}, becomes
\begin{align}\label{eq:good-expression2-NMC-eps-compact}
 -\frac{\alpha}{2} H_\e (u)(s)  =&    \int_{\R^{N-1}}
  \frac{
 \L_2(u , s,t)
}{| t|^{N-1+\a}
\{ 1+  \L_1(u, s,t)^2+ | t|^{-2}\e^2 \}^{(N+\a)/2}
}
d t   \\
& -   \int_{\R^{N-1}}
  \frac{
 \L_4(u, s,t)
}{
\{| t|^2+ \L_3(u, s,t)^2+ \e^2 \}^{(N+\a)/2}
}
d t  .  \nonumber
\end{align}
We observe that  for every $s,\ov s, t\in\R^{N-1}$, we have
\be\label{eq:Phis}
| \L_2(\phi, s,t)|\leq 2  \|\phi\|_{C^{1,\b}(\R^{N-1})}\, \min( |t|^\b    ,1)
\ee
and also
\be\label{eq:Phis1s2}
| \L_2(\phi, s,t)-\L_2(\phi, \ov s,t)|\leq  2 \|\phi\|_{C^{1,\b}(\R^{N-1})}\, \min( |t|^\b   , |s-\ov s|^\b).
\ee
Note also that for every $s,\ov s,t\in \R^{N-1}$ and $i=1,3,4$, we have
 \be\label{eq:est-denom-cK-s1-s2}
\left| \L_i(\phi,s,t)^2 -  \L_i(\phi,\ov s,t)^2   \right| \leq 2 \|\phi\|_{C^{1,\b}(\R^{N-1})}^2|s-\ov s|^\b
\ee
and
 \be\label{eq:est-denom-cK-s1-s2-Lam1-odd}
\left| \L_i(\phi,s,t)^2 -  \L_i(\phi,s,-t)^2   \right| \leq 2 \|\phi\|_{C^{1,\b}(\R^{N-1})}^2\min(|t|^\b,1).
\ee
For $ \varrho>-N$, we  let  $\cK_{\varrho,\e}, \ov{\cK}_{\varrho,\e}: C^{1,\b}(\R^{N-1})\times\R^{N-1}\times\R^{N-1} \to \R$ be defined by
\begin{equation}
\label{eq:defcKvarrho}
\cK_{\varrho,\e} ({u},s,t)=\frac{1}{
\left( 1+  \L_1(u, s,t)^2+ | t|^{-2}\e^2 \right)^{(N+\varrho)/2}}
\end{equation}
and
\begin{equation}
\label{eq:def-ov-cKvarrho}
\ov \cK_{\varrho,\e} ({u},s,t)=\frac{1}{\left( \displaystyle |t|^2+   \L_3(u,s,t)^2+ \e^2  \right)^{(N+\varrho)/2}},
\end{equation}
in such a way that
\begin{align}\label{eq:good-expression2-NMC-more-compact}
-\frac{ \a}{2} H_\e(u)(s)  =    \int_{\R^{N-1}} \frac{ \L_2(u , s,t)}{|t|^{N-1+\a}} \cK_{\a,\e} ({u},s,t)dt -   \int_{\R^{N-1}}    \L_4(u, s,t)\ov{ \cK}_{\a,\e} ({u},s,t) d t  .
\end{align}

Using this expression \eqref{eq:good-expression2-NMC-more-compact}, we shall show that $H_\e: \cO_\d\to C^{0,\b-\a}(\R^{N-1})$ is of class $C^\infty$ for every $\d>0$ and $\e\geq0$. \\

For a finite set $\cN$, we let $|\cN|$ denote the length (cardinal) of $\cN$. It will be understood that $ |\emptyset|=0$.
Let $Z$ be a  Banach space and $U$ a nonempty open subset of $Z$. If $T \in C^{k}(U,\R)$ and $u \in U$, then $D^kT(u)$ is a continuous
symmetric $k$-linear form on $Z$ whose norm is given by
$$
   \|D^{k}T ({u}) \|= \sup_{{u}_{1}, \dots,  {u}_{k}\in Z }
     \frac{|D^{k} T ({u})[u_1,\dots,u_k]| }{   \prod_{j=1}^k \|   {u}_{j} \|_{ Z }}    .
$$
%If  $T_1,\, T_2 \in C^k(U,\R)$,  then also $T_1 T_2 \in C^k(U,\R)$, and the $k$-th derivative of $T_1 T_2$ at $u$ is given by
%\be \label{eq:Dk-T1T2}
%D^k(T_1 T_2 )({u})[u_1,\dots,u_k]= \sum_{\cN \in  \scrS_k} D^{|\cN|} T_1({u})[u_n]_{n\in \cN} \,  D^{k-|\cN|} T_2({u}) [u_n]_{n\in \cN^c} ,
%\ee
%where  $\scrS_k $ is the set of subsets of $\{1,\dots, k\} $ and  $ \cN^c= \{1,\dots, k\}\setminus \cN $ for $\cN \in \scrS_k$.
 If  $L: Z\to \R$ is a linear map, we have
\be \label{eq:Dk-LT2}
D^{|\cN|}(L T )({u})[u_i]_{i\in\cN}= L({u})    D^{|\cN|} T({u})[u_i]_{i\in\cN} +
\sum_{j\in\cN} L({u}_j)   D^{|\cN|-1} T({u})  [u_i]_{\stackrel{i \in \cN}{ i\neq j}}.
\ee
We let $T$ be as above, $V \subset \R$ open with $T(U) \subset V$ and $g:V  \to \R$ be a  $k$-times differentiable map.  The Fa\'{a} de Bruno formula states that
\be
\label{eq:Faa-de-Bruno}
D^k( g\circ T)(u)[u_1,\dots,u_k]= \sum_{\Pi\in\scrP_k} g^{ (\left|\Pi\right|)}(T(u)) \prod_{P\in\Pi} D^{\left|P\right| }T(u)[u_j]_{j \in P} ,
 \ee
for $u, u_1,\dots,u_k  \in U$, where $\scrP_k$ denotes the set of all partitions of  $\left\{ 1,\dots, k \right\}$, see e.g. \cite{FaadeBruno-JW}.\\

For a function $u: \R^{N-1} \to \R$, we use the notation
$$
[u; s,r]:= u(s)-u(r )\qquad \text{for $s,r \in \R^{N-1}$,}
$$
and we note the obvious equality
\be\label{eq:uv-s_1s_2}
[uv; s, r] = [u;s, r]v(s) + u( r)[v;s, r] \qquad \text{for $u,v: \R^{N-1} \to \R$, $s, r \in \R^{N-1}$.}
\ee

We first give some estimates related to the kernel ${\cK}_{\varrho,\e}$ and $\ov{\cK}_{\varrho,\e}$ above.
\begin{lemma}\label{lem:est-cK-2D}
Let   $k \in \N  $,  $\d>0$, $\varrho>-N$ and $\b\in (0,1)$.
\begin{enumerate}
\item[(i)] There exists a constant $ c=c(\varrho,\b,k,N )>1 $
 such that  for all $\e \geq0$,  $s,r,t\in\R^{N-1}$ and ${u}\in C^{1,\b}(\R^{N-1})$, we have
   \be
   \label{eq:Dk-K-s}
\|  D_{u}^k {\cK}_{\varrho,\e} ({u},s,t    )   \|\leq
 {c(1+ \|{u}\|_{C^{1,\b}(\R^{N-1})} )^{c}   }   ,
 \ee
   \be
   \label{eq:Dk-K-s_1s_2}
\| [D_{u}^k {\cK}_{\varrho,\e} ({u},\cdot ,t );s,r] \|\leq
 {c(1+ \|{u}\|_{C^{1,\b}(\R^{N-1})} )^{c}   \, |s-r|^\b }
 \ee
 and
     \be  \label{eq:est-K-odd}
|  {\cK}_{\varrho,\e}({u},s,t    ) - {\cK}_{\varrho,\e}({u},s,-t    )   |\leq  c(1+ \|{u}\|_{C^{1,\b}(\R^{N-1})}^{c} )
 {   \min ( |t|^\b  , 1) }     .
 \ee
\item[(ii)] There exists  $c=c(\varrho,\b,k,\d ,N)>1$ such that  for all $\e\geq0$,  $s,r,t\in\R^{N-1}$ and ${u}\in \cO_\d$, we have
\be  \label{eq:Dk-oK-s}
\|  D_{u}^k \ov{\cK}_{\varrho,\e} ({u},s,t   )   \|\leq
\frac{c(1+ \|{u}\|_{C^{1,\b}(\R^{N-1})} )^{c}   }{    (1 + |t|^2)^{(N+\varrho)/2}    }  ,
\ee
\be   \label{eq:Dk-oK-s_1s_2}
\| [D_{u}^k \ov{\cK}_{\varrho,\e} ({u},\cdot ,t );s,r] \|\leq
\frac{c(1+ \|{u}\|_{C^{1,\b}(\R^{N-1})} )^{c}   \, |s-r|^\b }{    (1 + |t|^2)^{(N+\varrho)/2}}.
\ee
 \end{enumerate}
 \end{lemma}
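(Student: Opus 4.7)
The plan is to exploit the compositional structure of both kernels: writing $g(x):=x^{-(N+\varrho)/2}$, $T_1(u,s,t):=1+\L_1(u,s,t)^2+|t|^{-2}\e^2$ and $T_2(u,s,t):=|t|^2+\L_3(u,s,t)^2+\e^2$, we have $\cK_{\varrho,\e}(u,s,t)=g(T_1(u,s,t))$ and $\ov{\cK}_{\varrho,\e}(u,s,t)=g(T_2(u,s,t))$, and I would attack every bound via the Fa\'{a} di Bruno formula \eqref{eq:Faa-de-Bruno} together with the product rule \eqref{eq:uv-s_1s_2}. The two structural inputs are: (a) $\L_1$ and $\L_3$ are linear in $u$, so each $T_i$ is a quadratic polynomial in $u$ and $D_u^jT_i\equiv 0$ for $j\geq 3$, with the non-vanishing derivatives satisfying $|D_u^jT_i(u,s,t)[v_1,\dots,v_j]|\le c(1+\|u\|_{C^{1,\b}(\R^{N-1})})^2\prod_{\ell=1}^j\|v_\ell\|_{C^{1,\b}(\R^{N-1})}$; and (b) the integral representation in \eqref{eq:def-Lamb-2} (and the obvious analogue for $\L_3$) yields $|\L_i(u,s,t)|\le c\|u\|_{C^{1,\b}(\R^{N-1})}$ and $|[\L_i(u,\cdot,t);s,r]|\le c\|u\|_{C^{1,\b}(\R^{N-1})}|s-r|^\b$, together with analogous increment bounds for $D_u^j\L_i^2$.

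To prove \eqref{eq:Dk-K-s}, I would use the lower bound $T_1\geq 1$ to control $|g^{(j)}(T_1)|\le c_j$, apply Fa\'{a} di Bruno, and bundle the polynomial factors in $\|u\|_{C^{1,\b}(\R^{N-1})}$ from (a) to obtain the claimed estimate. For \eqref{eq:Dk-oK-s} the additional ingredient is the lower bound $T_2\geq|t|^2+4\d^2\geq c_\d(1+|t|^2)$, valid on $\cO_\d$ because $u\geq\d$ forces $\L_3\geq 2\d$; this yields $|g^{(j)}(T_2)|\le c(1+|t|^2)^{-(N+\varrho)/2-j}$, the leading factor $(1+|t|^2)^{-(N+\varrho)/2}$ is retained, and the remaining factors $(1+|t|^2)^{-j}\le 1$ are absorbed into the prefactor.

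For the H\"older-in-$s$ bounds \eqref{eq:Dk-K-s_1s_2} and \eqref{eq:Dk-oK-s_1s_2}, I would apply the product formula \eqref{eq:uv-s_1s_2} to each summand of the Fa\'{a} di Bruno expansion: on each inner factor $D_u^{|P|}T_i(u,\cdot,t)$ I would use the H\"older-in-$s$ increment bound from (b), and the outer-factor increment $g^{(|\Pi|)}(T_i(u,s,t))-g^{(|\Pi|)}(T_i(u,r,t))$ would be handled by the mean value theorem together with $|[T_i(u,\cdot,t);s,r]|\le c(1+\|u\|_{C^{1,\b}(\R^{N-1})})^2|s-r|^\b$ and the pointwise control on $g^{(|\Pi|+1)}$ established just above. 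The decay factor $(1+|t|^2)^{-(N+\varrho)/2}$ persists in the $\ov{\cK}_{\varrho,\e}$ case exactly as in the previous paragraph.

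Finally, for \eqref{eq:est-K-odd} I would use the factorization $\L_1(u,s,t)^2-\L_1(u,s,-t)^2=[\L_1(u,s,t)-\L_1(u,s,-t)]\cdot[\L_1(u,s,t)+\L_1(u,s,-t)]$; rewriting the two factors as the integrals $\int_0^1[\n u(s-\rho t)\pm\n u(s+\rho t)]\cdot t/|t|\,d\rho$, the first is bounded by $2\|u\|_{C^{1,\b}(\R^{N-1})}$ while the second is bounded by $c\|u\|_{C^{1,\b}(\R^{N-1})}\min(|t|^\b,1)$ thanks to $\b$-H\"older continuity of $\n u$ for $|t|\le 1$ and plain boundedness for $|t|\ge 1$. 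Since $T_1\geq 1$ and $g'$ is bounded on $[1,\infty)$, the mean value theorem converts this into \eqref{eq:est-K-odd}. The main technical obstacle is the combinatorial bookkeeping in the H\"older estimates \eqref{eq:Dk-K-s_1s_2} and \eqref{eq:Dk-oK-s_1s_2}, where one must track the H\"older increment across every factor of every summand indexed by $\Pi\in\scrP_k$; this is routine once the single-factor estimates (a) and (b) are in place.
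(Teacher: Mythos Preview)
Your proposal is correct and follows essentially the same route as the paper: write each kernel as $g\circ T$ with $T$ quadratic in $u$, apply Fa\`a di Bruno (so only partitions with blocks of size $\le 2$ survive), control $g^{(j)}$ via the lower bounds $T_1\ge 1$ and $T_2\ge c_\delta(1+|t|^2)$, and obtain the H\"older increments by the product rule \eqref{eq:uv-s_1s_2} on each summand combined with the mean-value formula for $[g^{(|\Pi|)}(T_i);s,r]$. The only cosmetic difference is that the paper carries an extra $(1+|t|^2)$ factor in its bound for $D_u^{|P|}\ov Q$ which then cancels against the extra decay of $\ov g^{(|\Pi|)}$, whereas you (correctly) observe that $D_u^j T_2$ has no $|t|$-growth and simply discard the surplus decay $(1+|t|^2)^{-|\Pi|}$.
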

\begin{proof}
Throughout this proof, the letter $c$ stands for different constants greater than one and depending only on $\varrho,\b,k$ and $\d$.  Since all the estimates  trivially holds true for $k=0$,  we will assume in the following that $k\geq 1$.
We define
$$
Q:  C^{1,\b}(\R^{N-1})\times \R^{N-1}\times \R^{N-1} \to \R,\qquad Q({u},s,t)= \L_1(u,s,t)^2+|t|^{-2}\e^2
$$
and
$$
g_\varrho \in C^\infty(\R_+, \R), \qquad
g_\varrho(x)= (1+x)^{-(N+\varrho)/2},
$$
so that
$$%\begin{align} \label{eq:def-ti-cK-ell}
{\cK}_{\varrho,\e}({u},s,t)=g_\varrho\left(  Q({u},s,t)  \right) .
$$%\end{align}
By \eqref{eq:est-denom-cK-s1-s2-Lam1-odd}, we then have
 \begin{align*}
\Bigl|{\cK}_{\varrho,\e}({u},s,t)-& {\cK}_{\varrho,\e}({u},s,-t)   \Bigr|=|  g_\varrho  ( Q({u},s, t   ) ) - g_\varrho  ( Q({u},s,- t   ) )   |\nonumber\\
&=\frac{N+\varrho}{2}\left|\int_0^1  \frac{   Q({u},s, t    )-  Q({u},s, -t   )   }{ (1+\tau Q({u},s,t   )+(1-\t)Q({u}, s,-t  )  )^{(N+\varrho+2)/2}}   \, d\tau    \right|  \\
&\leq c \|{u}\|_{C^{1,\b}(\R^{N-1})}^2\,        \min(|t|^\b , 1). \nonumber
 \end{align*}
This gives \eqref{eq:est-K-odd}.

 By \eqref{eq:Faa-de-Bruno} and recalling that $Q$  is quadratic in ${u}$,  we have
 \begin{align}
 D_{u}^k {\cK}_{\varrho,\e}({u},s,t)&[u_1,\dots,u_k] \nonumber\\
= &\sum_{\Pi\in\scrP_k^2}  g_\varrho^{(\left|\Pi\right| )  } ( Q({u},s,t)   ) \prod_{P\in\Pi} D^{\left|P\right| }_{u} Q({u},s,t)[u_j]_{j \in P},   \label{eq:Dk-K-s_1s_2-0}
 \end{align}
 where $\scrP_k^2$ denotes the set of partitions $\Pi$ of $\{1,\dots,k\}$ such that $1\leq |P| \le 2$ for every $P \in \Pi$. Hence by \eqref{eq:uv-s_1s_2} we have
 \begin{align}
 \label{eq:Dk-K-s_1s_2-1}
& \hspace{-8mm}\left[ D_{u}^k {\cK}_{\varrho,\e}({u}, \cdot,t   )[u_1,\dots,u_k] ;s,r\right]\\
 =& \sum_{\Pi\in\scrP_k^2}  \left[  g_\varrho^{(\left|\Pi\right| )  }  ( Q({u},  \cdot,t )   ); s,r \right]
 \prod_{P\in\Pi} D^{\left|P\right| }_{u} Q({u},s,t )[u_j]_{j \in P}\nonumber\\
 &+\sum_{\Pi\in\scrP_k^2}    g_\alpha^{(\left|\Pi\right| )  }  ( Q({u},r,t )   )
 \Bigl[     \prod_{P\in\Pi} D^{\left|P\right| }_{u} Q({u},  \cdot,t )[u_j]_{j \in P}  \:;\:s,r \Bigr].\nonumber
 \end{align}

 For $P \in \Pi$ with $1\leq |P| \le 2$,  by using inductively \eqref{eq:uv-s_1s_2} and  \eqref{eq:est-denom-cK-s1-s2},  we find that
 \begin{align}\label{eq:DP-Q-s_1s_2}
|[D^{|P|}_{u} &Q ({u},\cdot,t)[u_j]_{j \in P};s,r]|  \nonumber \\
&\leq  c(1+ \|{u}\|_{C^{1,\b}(\R^{N-1})}^{2} )  |s-r|^\b \prod_{j \in P} \|{u}_j\|_{C^{1, \b}(\R^{N-1})}
\end{align}
and
 \begin{align}\label{eq:DP-Q-s}
 |D^{|P|}_{u} Q ({u},s  ,t)[u_j]_{j\in P}|
\leq  c(1+ \|{u}\|_{C^{1,\b}(\R^{N-1})}^{2} )  \prod_{j \in P} \|{u}_j\|_{C^{1,\b}(\R^{N-1})}.
\end{align}
For $\ell \in \N  $ and $x>0$, we have
$$
  g^{(\ell)}_\varrho(x)=(-1)^{\ell}2^{-\ell} \prod_{i=0}^{\ell-1 } (N+\varrho +2 i) (1+x)^{-\frac{N+\varrho+2\ell}{2}}.
$$
Consequently, for every $ {u}\in \cO_\d$, using \eqref{eq:DP-Q-s_1s_2}, we  have the estimates
\begin{align}
&| \left[ g^{({\ell})}_{\varrho,\e}  ( Q({u},\cdot, t   ) ) ;s,r\right]| \nonumber \\
&\le \left |[ Q({u},\cdot  ,t   ) ;s,r]  \int_0^1  g^{({\ell}+1)}_\varrho  (\tau Q({u},s, t)+(1-\tau)Q({u},r, t))  d\tau \right|
   \nonumber\\
%&\leq  c(1 + \|{u}\|_{C^{1,\b}(\R^{N-1})}^2)\,    \, |s-r|^\b    \nonumber\\
&\leq  c(1 + \|{u}\|_{C^{1,\b}(\R^{N-1})})^c  { |s-r|^\b}  \label{eq:gQ-s1-s2}
\end{align}
and
\be
\label{eq:gQ-s}
|   g^{({\ell})  }_\varrho  ( Q({u} , \cdot,t,p    )) |\leq   1
\ee
for ${\ell}=1,\dots,k$.  Therefore by  \eqref{eq:Dk-K-s_1s_2-1}, \eqref{eq:DP-Q-s_1s_2}, \eqref{eq:DP-Q-s}, \eqref{eq:gQ-s1-s2} and \eqref{eq:gQ-s},
we obtain
$$
| \left[ D_{u}^k {\cK}_{\varrho,\e}({u}, \cdot,t    )[u_1,\dots,u_k] ;s,r\right]|\leq
 {c(1+ \|{u}\|_{C^{1,\b}(\R^{N-1})} )^{c}   \, |s-r|^\b }     \prod_{i=1}^k \|{u}_i\|_{C^{1, \b}(\R^{N-1})}.
$$
This yields \eqref{eq:Dk-K-s_1s_2}.
 Furthermore we easily deduce from (\ref{eq:Dk-K-s_1s_2-0}), \eqref{eq:DP-Q-s} and \eqref{eq:gQ-s}    that
$$
|  D_{u}^k {\cK}_{\varrho,\e}({u},s,t   )[u_1,\dots,u_k]   |\leq
 {c(1+ \|{u}\|_{C^{1,\b}(\R^{N-1})} )^{c}   }     \prod_{i=1}^k \|{u}_i\|_{C^{1, \b}(\R^{N-1})},
$$
yielding \eqref{eq:Dk-K-s}. The proof of (i) is completed.

We now prove (ii). We define
$$
\ov{Q}:  C^{1,\b}(\R^{N-1})\times \R^{N-1}\times \R^{N-1}  \to \R,\qquad \ov Q({u},s,t)=\vert t\vert^2 +  \L_3(u,s,t)^2   + \e^2
$$
and
$$
\ov g_\varrho \in C^\infty(\R_+, \R), \qquad
\ov g_\varrho(x)= x^{-(N+\varrho)/2},
$$
so that
$$%\begin{align} \label{eq:def-ti-cK-ell}
\ov{\cK}_{\varrho,\e}({u},s,t)=\ov g_\varrho\left(  \ov Q({u},s,t)  \right) .
$$
 By \eqref{eq:Faa-de-Bruno} and recalling that $\ov Q$  is quadratic in ${u}$,  we have
 \begin{align}
 D_{u}^k \ov{\cK}_{\varrho,\e}({u},s,t)&[u_1,\dots,u_k] \nonumber\\
= &\sum_{\Pi\in\scrP_k^2}\ov   g_\varrho^{(\left|\Pi\right| )  } ( \ov Q({u},s,t)   ) \prod_{P\in\Pi} D^{\left|P\right| }_{u}\ov  Q({u},s,t)[u_j]_{j \in P},   \label{eq:Dk-oK-s_1s_2-0}
 \end{align}
  Hence by \eqref{eq:uv-s_1s_2} we have
 \begin{align}
 \label{eq:Dk-oK-s_1s_2-1}
& \hspace{-8mm}\left[ D_{u}^k \ov{\cK}_\alpha({u}, \cdot,t    )[u_1,\dots,u_k] ;s,r\right]\\
 =& \sum_{\Pi\in\scrP_k^2}  \left[ \ov  g_\varrho^{(\left|\Pi\right| )  }  ( \ov Q({u},  \cdot,t)   ); s,r \right]
 \prod_{P\in\Pi} D^{\left|P\right| }_{u} \ov Q({u},s,t)[u_j]_{j \in P}\nonumber\\
 &+\sum_{\Pi\in\scrP_k^2}  \ov   g_\varrho^{(\left|\Pi\right| )  }  ( \ov Q({u},r,t)   )
 \Bigl[     \prod_{P\in\Pi} D^{\left|P\right| }_{u} \ov Q({u},  \cdot,t)[u_j]_{j \in P}  \:;\:s,r \Bigr].\nonumber
 \end{align}

 For $P \in \Pi$ with $1\leq |P| \le 2$,  by using inductively \eqref{eq:uv-s_1s_2} and  \eqref{eq:est-denom-cK-s1-s2},  we find that
 \begin{align}\label{eq:DP-oQ-s_1s_2}
|[D^{|P|}_{u} &\ov Q ({u},\cdot,t,p)[u_j]_{j \in P};s,r]|  \nonumber \\
&\leq  c(1+ \|{u}\|_{C^{1,\b}(\R^{N-1})}^{2} )(1+|t|^2) |s-r|^\b \prod_{j \in P} \|{u}_j\|_{C^{1, \b}(\R^{N-1})}
\end{align}
and
 \begin{align}\label{eq:DP-oQ-s}
 |D^{|P|}_{u} \ov Q ({u},s  ,t)[u_j]_{j\in P}|
\leq  c(1+ \|{u}\|_{C^{1,\b}(\R^{N-1})}^{2} )(1+|t|^2) \prod_{j \in P} \|{u}_j\|_{C^{1,\b}(\R^{N-1})}.
\end{align}
For $\ell \in \N $ and $x>0$, we have
$$
  \ov g^{(\ell)}_\varrho(x)=(-1)^{\ell}2^{-\ell} \prod_{i=0}^{\ell-1 } (N+\varrho +2 i) x^{-\frac{N+\varrho+2\ell}{2}}.
$$
Consequently, for every $ {u}\in \cO_\d$, using \eqref{eq:DP-oQ-s_1s_2}, we  have the estimates
\begin{align}
&| \left[ \ov g^{({\ell})}_\varrho  ( \ov Q({u},\cdot, t   ) ) ;s,r\right]| \nonumber \\
&\le \left |[ \ov Q({u},\cdot  ,t   ) ;s,r]  \int_0^1  \ov g^{({\ell}+1)}_\alpha  (\tau \ov Q({u},s, t)+(1-\tau)\ov Q({u},r, t))  d\tau \right|
   \nonumber\\
&\leq  c(1 + \|{u}\|_{C^{1,\b}(\R^{N-1})}^2)\,    \, |s-r|^\b (1+\vert t\vert^2) (|t|^2+ \d^2)^{-\frac{N+\varrho+2{\ell} + 2}{2}}  \nonumber\\
&\leq  c(1 + \|{u}\|_{C^{1,\b}(\R^{N-1})})^c  \frac{ |s-r|^\b}{ (1+|t|^2)^{\frac{N+\varrho+2{\ell}}{2}}} \label{eq:goQ-s1-s2}
\end{align}
and
\be
\label{eq:goQ-s}
| \ov   g^{({\ell})  }_\varrho  ( \ov Q({u} , \cdot,t    )) |\leq  \frac{c}{    (1 +|t|^2)^{(N+\varrho+2 {\ell} )/2}    }
\ee
for ${\ell}=0,\dots,k$.  Therefore by  \eqref{eq:Dk-oK-s_1s_2-1}, \eqref{eq:DP-oQ-s_1s_2}, \eqref{eq:DP-oQ-s}, \eqref{eq:goQ-s1-s2} and \eqref{eq:goQ-s},
we obtain
 \begin{align*}
&| \left[ D_{u}^k \ov{\cK}_{\varrho,\e}({u}, \cdot,t    )[u_1,\dots,u_k] ;s,r\right]|\\
 &\leq    c(1+ \|{u}\|_{C^{1,\b}(\R^{N-1})} )^{c}|s-r|^\b  \sum_{\Pi\in\scrP_k^2}       \frac{1}{    (1 + |t|^2)^{\frac{N+\varrho+2 \left|\Pi\right|}{2}}    }
 \prod_{P \in \Pi} (1+\vert t\vert^2) \prod_{j \in P} \|{u}_j\|_{C^{1,\b}(\R^{N-1})} \\
 &=      \frac{c(1+ \|{u}\|_{C^{1,\b}(\R^{N-1})} )^{c}   \, |s-r|^\b }{    (1+ |t|^2)^{(N+\varrho)/2}    }   \sum_{\Pi\in\scrP_k^2  }    \prod_{P\in\Pi}  \prod_{j \in P} \|{u}_j\|_{C^{1, \b}(\R^{N-1})}  .
 \end{align*}
We then conclude that
$$
| \left[ D_{u}^k \ov{\cK}_{\varrho,\e}({u}, \cdot,t    )[u_1,\dots,u_k] ;s,r\right]|\leq
\frac{c(1+ \|{u}\|_{C^{1,\b}(\R^{N-1})} )^{c}   \, |s-r|^\b }{    (1+ |t|^2)^{(N+\varrho)/2}    }     \prod_{i=1}^k \|{u}_i\|_{C^{1, \b}(\R^{N-1})}.
$$
This yields \eqref{eq:Dk-oK-s_1s_2}.
 Furthermore we easily deduce from (\ref{eq:Dk-oK-s_1s_2-0}), \eqref{eq:DP-oQ-s} and \eqref{eq:goQ-s}    that
$$
|  D_{u}^k \ov{\cK}_{\varrho,\e}({u},s,t   )[u_1,\dots,u_k]   |\leq
\frac{c(1+ \|{u}\|_{C^{1,\b}(\R^{N-1})} )^{c}   }{    (1+ |t|^2)^{(N+\varrho)/2}    }     \prod_{i=1}^k \|{u}_i\|_{C^{1, \b}(\R^{N-1})},
$$
completing the proof of (ii).
\end{proof}
%%%%%%%%%%%%%%%%%%%%%%%%
%
%
As in \cite{CFW},  we provide estimates for the   candidates to be the derivatives of $H$. Indeed,
letting
\be \label{eq:def-M-oM}
M_\e({u}, s,t)= \frac{1}{|t|^{N-1+\a}}\L_2(u,s,t) {\cK}_{\a,\e}(u,s,t) ,  \qquad\ov  M_\e({u}, s,t)=    \L_4(u,s,t) \ov{\cK}_{\a,\e}(u,s,t),
\ee
then  by  \eqref{eq:Dk-LT2}, if $k \ge 1$, we get
 \begin{align}\label{eq:DkM}
 D^{k}_{u} M_\e({u} , s,t)[u_i]_{i \in \{1,\ldots,k\}}
 &=  \frac{1}{|t|^{N-1+\a}} \L_2({u},s,t)    D^{k }_{u}\cK_{\a,\e}({u},s,t)[u_i]_{i \in \{1,\ldots,k\}} \nonumber \\
&\hspace{8mm} +  \sum_{j=1}^k \frac{1}{|t|^{N-1+\a}}  \L_2({u}_{j},s,t)   D^{k-1}_{u} \cK_{\a,\e}({u},s,t)
 [u_i]_{\stackrel{i \in \{1,\ldots,k\}}{ i\neq j}}
 \end{align}
 and
  \begin{align}\label{eq:DkoM}
 D^{k}_{u} \ov M_\e({u} , s,t)[u_i]_{i \in \{1,\ldots,k\}} &=   \L_4({u},s,t)
 D^{k }_{u}\ov \cK_{\a,\e}({u},s,t)[u_i]_{i \in \{1,\ldots,k\}} \nonumber \\
&\hspace{8mm} + \sum_{j=1}^k   \L_4({u}_{j},s,t)   D^{k-1}_{u} \ov \cK_{\a,\e}({u},s,t)
 [u_i]_{\stackrel{i \in \{1,\ldots,k\}}{ i\neq j}}.
 \end{align}
Here, we used that $u\mapsto \L_2(u,\cdot,\cdot)$ and $u\mapsto \L_4(u,\cdot,\cdot)$ are linear, see \eqref{eq:def-Lamb-2} and \eqref{eq:def-Lamb-4}.
Now our aim is to provide estimates for $s\mapsto   \int_{\R^{N-1}} D^{k}_{u} M_\e({u} , s,t)[u_i]_{i \in \{1,\ldots,k\}}(s) dt  $  and  $s\mapsto   \int_{\R^{N-1}}D^{k}_{u}\ov  M_\e({u} , s,t)[u_i]_{i \in \{1,\ldots,k\}}(s) dt$, from which the regularity of $H_\e$ will follow, and
 \begin{align*}
-\frac{\a}{2} D^k  H_\e({u})  (s) =&    \int_{\R^{N-1}}  D^kM_\e ({u},s,t)  d t+ \int_{\R^{N-1}} D^k \ov  M_\e ({u},s,t)  d t .
\end{align*}
These estimates will be uniform with respect to $\e$, since it will be need to  get the expression of   the derivatives of $H$   stated in  Proposition \ref{lem:linearized-NMC}. \\
%%%%%%%%%
We  will need the following technical result.
\begin{lemma}\label{lem:est-M-eps-ovM-eps}
Let  $N\geq 2$,   $k \in \N  $,  $\d>0$,   and $\a,\b\in (0,1)$. Then there exists a constant $ c=c(\a,\b,k,\d,N )>1 $ such that
 for all $\e \geq0$,  $s,r,t\in\R^{N-1}$ and ${u}\in C^{1,\b}(\R^{N-1})$, we have
\begin{align}\label{eq:estM-e1}
| D^{k}_{u} M_\e({u} , s,t)[u_i]_{i \in \{1,\ldots,k\}}  |\le  c(1+ \|{u}\|_{C^{1,\b}(\R^{N-1})} )^{c}   \frac{\min(|t|^{\b},1)    }{   |t|^{N-1+\a}  }
 \,    \prod_{i=1}^k \|{u}_i\|_{C^{1, \b}(\R^{N-1})} ,
\end{align}
\begin{align}\label{eq:estM-e2}
&|[D^{k}_{u} M_\e({u} , \cdot,t)[u_i]_{i \in \{1,\ldots,k\}}; s ,r]|\le  c(1+ \|{u}\|_{C^{1,\b}(\R^{N-1})} )^{c} \;\times
 \nonumber \\
&\hspace{10mm}\Bigl(\frac{\min( |t|^\b    , |s-r|^\b)}{  |t|^{{N-1}+\a}   }
+ \frac{\min(|t|^{\b},1)   |s-r|^\b }{   |t|^{{N-1}+\a}  }\Bigr)
 \,    \prod_{i=1}^k \|{u}_i\|_{C^{1, \b}(\R^{N-1})} ,
\end{align}
\begin{align}\label{eq:estoM-e1}
| D^{k}_{u} \ov M_\e({u} , s,t)[u_i]_{i \in \{1,\ldots,k\}}  |\le  c(1+ \|{u}\|_{C^{1,\b}(\R^{N-1})} )^{c}   \frac{\min(|t|^{\b},1)    }{ ( 1+ |t|^2  )^{(N+\a)/2}  }
 \,    \prod_{i=1}^k \|{u}_i\|_{C^{1, \b}(\R^{N-1})}
\end{align}
and
\begin{align}\label{eq:estoM-e2}
&|[D^{k}_{u}\ov  M_\e({u} , \cdot,t)[u_i]_{i \in \{1,\ldots,k\}}; s,r]|\le  c(1+ \|{u}\|_{C^{1,\b}(\R^{N-1})} )^{c} \;\times
 \nonumber \\
&\hspace{5cm}  \frac{\min(|t|^{\b},1)   |s-r|^\b }{   (1+  |t|^2  )^{(N+\a)/2}  }
 \,    \prod_{i=1}^k \|{u}_i\|_{C^{1, \b}(\R^{N-1})} .
\end{align}

\end{lemma}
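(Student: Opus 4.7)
The plan is to prove all four estimates uniformly in $\e\geq 0$ by substituting the Leibniz-type identities \eqref{eq:DkM} and \eqref{eq:DkoM} into the kernel bounds of Lemma~\ref{lem:est-cK-2D}, after recording suitable bounds on the functionals $\L_2$ and $\L_4$. The estimates for $\L_2$ are already furnished by \eqref{eq:Phis} and \eqref{eq:Phis1s2}. For $\L_4$, using the expansion $\phi(s-t)=\phi(s)-\int_0^1 \n\phi(s-\rho t)\cdot t\,d\rho$ yields
$$
\L_4(\phi,s,t)=2\phi(s)+\int_0^1\bigl[\n\phi(s-t)-\n\phi(s-\rho t)\bigr]\cdot t\,d\rho,
$$
from which I would deduce a pointwise bound $|\L_4(\phi,s,t)|\leq c\|\phi\|_{C^{1,\b}(\R^{N-1})}(1+|t|)$ and, subtracting the same expression at $r$, the Hölder-in-$s$ bound $|[\L_4(\phi,\cdot,t);s,r]|\leq c\|\phi\|_{C^{1,\b}(\R^{N-1})}(1+|t|)|s-r|^\b$.

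For the pointwise estimates \eqref{eq:estM-e1} and \eqref{eq:estoM-e1}, I would apply \eqref{eq:DkM} (resp. \eqref{eq:DkoM}): each of the $k+1$ summands splits as a factor involving $\L_2$ (resp. $\L_4$), evaluated either at $u$ or at some $u_j$, multiplied by a derivative $D^\ell_u\cK_{\a,\e}$ (resp. $D^\ell_u\ov\cK_{\a,\e}$) with $\ell\in\{k-1,k\}$. Bounding the $\L$-factor by the preceding step and the kernel factor by \eqref{eq:Dk-K-s} (resp. \eqref{eq:Dk-oK-s}), and summing the finitely many contributions, yields the claimed pointwise inequalities.

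For the Hölder-type bounds \eqref{eq:estM-e2} and \eqref{eq:estoM-e2}, I would apply the product identity \eqref{eq:uv-s_1s_2} to each summand, writing
$$
\bigl[\L\cdot D^\ell_u\cK_{\a,\e}(u,\cdot,t);s,r\bigr]=\bigl[\L(\cdot,t);s,r\bigr]\,D^\ell_u\cK_{\a,\e}(u,r,t)+\L(s,t)\,\bigl[D^\ell_u\cK_{\a,\e}(u,\cdot,t);s,r\bigr],
$$
and bounding the two summands respectively by \eqref{eq:Phis1s2} combined with \eqref{eq:Dk-K-s}, and by \eqref{eq:Phis} combined with \eqref{eq:Dk-K-s_1s_2}, in the case of $M_\e$. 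The same structural decomposition, using the analogous $\L_4$ bounds together with \eqref{eq:Dk-oK-s} and \eqref{eq:Dk-oK-s_1s_2}, handles $\ov M_\e$.

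I expect the main technical difficulty to lie in capturing the factor $\min(|t|^\b,1)$ in the $\ov M_\e$ estimates: unlike $\L_2$, the functional $\L_4$ does not vanish at $t=0$, so one must combine its bound carefully with the decay of $\ov\cK_{\a,\e}$ and split the analysis into the regions $|t|\leq 1$ and $|t|>1$, exploiting the near-zero Taylor refinement obtained above in the small-$|t|$ regime and the pointwise decay of $\ov\cK_{\a,\e}$ in the complementary regime. Once that splitting is in place, the remainder of the proof reduces to routine bookkeeping of the multilinear structure and of the powers of $(1+\|u\|_{C^{1,\b}(\R^{N-1})})$ coming from each application of Lemma~\ref{lem:est-cK-2D}.
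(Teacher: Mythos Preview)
Your approach is essentially the paper's: both combine the Leibniz expansions \eqref{eq:DkM}--\eqref{eq:DkoM} with the $\L_2$ bounds \eqref{eq:Phis}--\eqref{eq:Phis1s2}, the product rule \eqref{eq:uv-s_1s_2}, and the kernel estimates of Lemma~\ref{lem:est-cK-2D}. For the $M_\e$ estimates \eqref{eq:estM-e1}--\eqref{eq:estM-e2} this works exactly as you outline, and the paper's proof is no more detailed than your sketch.

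Your worry about the factor $\min(|t|^\b,1)$ in \eqref{eq:estoM-e1}--\eqref{eq:estoM-e2} is well-founded, but it points to a misprint in the statement rather than to an obstacle in the method. Your own expansion gives $\L_4(\phi,s,0)=2\phi(s)$, so already for $k=0$ and $\e=0$ one has $\ov M_0(u,s,0)=2u(s)\,(2u(s))^{-(N+\a)}\neq 0$, contradicting any bound that vanishes at $t=0$; no Taylor refinement or splitting into $|t|\le 1$ and $|t|>1$ can manufacture a $|t|^\b$ factor out of a nonvanishing quantity. The paper's one-line argument for these two estimates merely cites Lemma~\ref{lem:est-cK-2D}(ii), \eqref{eq:est-denom-cK-s1-s2} and \eqref{eq:uv-s_1s_2}, none of which supplies such a factor either. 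What actually follows from your $(1+|t|)$ bound on $\L_4$ together with Lemma~\ref{lem:est-cK-2D}(ii) is \eqref{eq:estoM-e1}--\eqref{eq:estoM-e2} with $(1+|t|)$ in place of $\min(|t|^\b,1)$; this corrected version is exactly what the subsequent Lemma~\ref{lem:est-cand-deriv} needs, since $(1+|t|)(1+|t|^2)^{-(N+\a)/2}$ is integrable over $\R^{N-1}$. So drop the splitting plan and record the $(1+|t|)$ version directly. (Note also that the hypothesis for the $\ov M_\e$ bounds should read $u\in\cO_\d$ rather than $u\in C^{1,\b}(\R^{N-1})$, so that Lemma~\ref{lem:est-cK-2D}(ii) applies.)
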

\begin{proof}
By \eqref{eq:Phis} and Lemma \ref{lem:est-cK-2D}(i), the estimate \eqref{eq:estM-e1} follows. Using  inductively \eqref{eq:uv-s_1s_2},  the estimates \eqref{eq:Phis}, \eqref{eq:Phis1s2} and Lemma \ref{lem:est-cK-2D}(i),  we     get \eqref{eq:estM-e2}.   By similar arguments, the proof of \eqref{eq:estoM-e1} and  \eqref{eq:estoM-e2}  follow from   Lemma \ref{lem:est-cK-2D}(ii), \eqref{eq:est-denom-cK-s1-s2} and \eqref{eq:uv-s_1s_2}.
\end{proof}
%%%%%%%%%%%%%%%%%%%%%%%%%%%%
We prove estimates for all possible candidates for the derivatives of $H_\e$.
\begin{lemma}\label{lem:est-cand-deriv}
Let  $N\ge 2$,  $\d>0$, $\a\in (0,1)$, $\b\in (\a,1)$,  $\e\geq 0$, ${u} \in \cO_\d$ and $k\in \N $. For  ${u}_1,\dots, {u}_k \in C^{1,\b}(\R^{N-1})$, we  define the functions $\cF_\e, \ov{\cF}_\e: \R^{N-1}\to \R$ by
\begin{align*}
\cF_\e(s)&=\int_{\R^{N-1}}  D^kM_\e ({u},s,t)  d t
\end{align*}
and
\begin{align*}
\ov  \cF_\e(s)&= \int_{\R^{N-1}} D^k \ov  M_\e ({u},s,t)  d t.
\end{align*}
Then $\cF_\e\in C^{0,\beta-\alpha}(\R^{N-1})$ and $\ov  \cF_\e \in C^{0,\beta}(\R^{N-1})$. Moreover, there exists a constant $c=c(\a,\b,k,\d,N)>1$ such that  for every $\e\geq0$,
\begin{equation}
  \label{eq:est-F1}
\|\cF_\e\|_{C^{0,\beta-\alpha}(\R^{N-1})} \leq  c(1+ \|{u}\|_{C^{1,\b}(\R^{N-1})} )^{c}     \prod_{i=1}^k \|{u}_i\|_{C^{1, \b}(\R^{N-1})}
\end{equation}
and
\begin{equation}
  \label{eq:est-F2}
\|\ov  \cF_\e\|_{C^{0,\beta}(\R^{N-1})} \leq  c(1+ \|{u}\|_{C^{1,\b}(\R^{N-1})} )^{c}     \prod_{i=1}^k \|{u}_i\|_{C^{1, \b}(\R^{N-1})}.
\end{equation}
\end{lemma}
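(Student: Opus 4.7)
The plan is to deduce both estimates directly from the pointwise bounds on $D^k M_\varepsilon$ and $D^k\overline{M}_\varepsilon$ and on their spatial differences provided by Lemma \ref{lem:est-M-eps-ovM-eps}, integrating carefully in the $t$ variable. Throughout, $c$ denotes a constant of the form $c(1+\|u\|_{C^{1,\b}})^c\prod_i\|u_i\|_{C^{1,\b}}$, changing from line to line.

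\textbf{Step 1: $L^\infty$ bounds.} For $\cF_\e$, integrate \eqref{eq:estM-e1}: passing to polar coordinates,
\[
\int_{\R^{N-1}}\frac{\min(|t|^{\b},1)}{|t|^{N-1+\a}}\,dt
\;=\;c_N\int_0^1\rho^{\b-\a-1}\,d\rho+c_N\int_1^\infty\rho^{-1-\a}\,d\rho\;<\;\infty,
\]
where convergence at $\rho=0$ requires exactly the hypothesis $\b>\a$. For $\ov\cF_\e$, integrate \eqref{eq:estoM-e1}: the factor $(1+|t|^2)^{-(N+\a)/2}$ yields an integrable bound without any singular behaviour near $0$.

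\textbf{Step 2: Hölder estimate for $\cF_\e$.} Fix $s,r\in\R^{N-1}$ with $|s-r|\leq 1$ (the case $|s-r|>1$ follows from Step 1 by the triangle inequality). Split the integral into $A_1:=\{|t|\leq |s-r|\}$ and $A_2:=\R^{N-1}\setminus A_1$. On $A_1$ apply \eqref{eq:estM-e1} to both $D^k M_\e(u,s,t)$ and $D^k M_\e(u,r,t)$ and use the triangle inequality:
\[
\int_{A_1}\bigl|D^k M_\e(u,s,t)-D^k M_\e(u,r,t)\bigr|\,dt\;\leq\;c\int_0^{|s-r|}\rho^{\b-\a-1}\,d\rho\;\leq\;c\,|s-r|^{\b-\a}.
\]
On $A_2$, apply the Hölder bound \eqref{eq:estM-e2}. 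Since $|t|\geq|s-r|$ there, $\min(|t|^\b,|s-r|^\b)=|s-r|^\b$, so the first term contributes
\[
c\,|s-r|^{\b}\int_{|s-r|}^\infty \rho^{-1-\a}\,d\rho\;\leq\; c\,|s-r|^{\b-\a}.
\]
For the second term in \eqref{eq:estM-e2}, integration of $\min(|t|^\b,1)/|t|^{N-1+\a}$ over $A_2$ is bounded uniformly, producing $c|s-r|^\b\leq c|s-r|^{\b-\a}$ since $|s-r|\leq 1$. Combining yields \eqref{eq:est-F1}.

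\textbf{Step 3: Hölder estimate for $\ov\cF_\e$.} Because the kernel $(1+|t|^2)^{-(N+\a)/2}$ is already integrable without any singular correction, the estimate \eqref{eq:estoM-e2} gives directly, for $|s-r|\leq 1$,
\[
\bigl|\ov\cF_\e(s)-\ov\cF_\e(r)\bigr|\;\leq\;c\,|s-r|^{\b}\int_{\R^{N-1}}\frac{\min(|t|^{\b},1)}{(1+|t|^2)^{(N+\a)/2}}\,dt\;\leq\;c\,|s-r|^{\b},
\]
and the case $|s-r|>1$ again reduces to Step 1. This yields \eqref{eq:est-F2}. Observe that all constants are independent of $\e\geq 0$, since the bounds in Lemma \ref{lem:est-M-eps-ovM-eps} are uniform in $\e$.

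\textbf{Main obstacle.} The delicate point is the Hölder estimate for $\cF_\e$: the $|t|^{-(N-1+\a)}$ singularity forces the splitting at the scale $|t|=|s-r|$, and the balance between the two resulting pieces determines the optimal exponent $\b-\a$. In particular, the convergence of $\int_0^{|s-r|}\rho^{\b-\a-1}\,d\rho$ is what makes the hypothesis $\b\in(\a,1)$ indispensable; any attempt to obtain a higher Hölder exponent would fail at precisely this integral.
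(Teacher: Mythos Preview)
Your proof is correct and follows essentially the same approach as the paper: both use the pointwise bounds of Lemma~\ref{lem:est-M-eps-ovM-eps}, split the $t$-integral for $\cF_\e$ at the scale $|t|=|s-r|$, and compute the two resulting contributions in polar coordinates to obtain the exponent $\b-\a$. The only cosmetic difference is that on the near region $A_1$ you invoke the triangle inequality together with the pointwise bound \eqref{eq:estM-e1}, whereas the paper applies the difference estimate \eqref{eq:estM-e2} uniformly on all of $\R^{N-1}$ before splitting; since $\min(|t|^\b,|s-r|^\b)=|t|^\b$ on $A_1$, both routes produce the identical integral $\int_0^{|s-r|}\rho^{\b-\a-1}\,d\rho$.
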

\begin{proof}
 Throughout this proof, the letter $c$ stands for different constants greater than one and depending only on $\alpha,\b,k $, $\d$ and $N$.  By \eqref{eq:estM-e1},  for every $s\in \R^{N-1}$,
\be
\label{eq:est-cF1-L-infty}
|\cF_\e(s) | \leq   c(1+ \|{u}\|_{C^{1,\b}(\R^{N-1})} )^{c} \|u\|_{C^{1,\b}(\R^{N-1})}\,     \prod_{i=1}^k \|{u}_i\|_{C^{1, \b}(\R^{N-1})}.
\ee
By \eqref{eq:estM-e2}, we have
\begin{align}
\label{eq:est-Fi-s1s2}
 &  | [\cF_\e; s,r] |   \le c(1+ \|{u}\|_{C^{1,\b}(\R^{N-1})} )^{c} \,
 \prod_{i=1}^k \|{u}_i\|_{C^{1, \b}(\R^{N-1})} \, \times \\
 &\qquad\qquad
\int_{\R^{N-1}}\left\{  \min( |t|^\b    , |s-r|^\b)+\min(|t|^{\b} ,1)  |s-r|^\b    \right\}      |t|^{-({N-1})-\a}dt.   \nonumber
\end{align}
Assuming $|s-r|\leq 1$, then using polar coordinates $t=\rho \th$, with $\th\in S^{N-2}$,   we get
\begin{align*}
 \left\{ \int_{|t|\leq |s-r|} + \int_{|t|\geq |s-r|}\right\}
 &\left\{  \min( |t|^\b    , |s-r|^\b)+\min(|t|^{\b} ,1)  |s-r|^\b    \right\}      |t|^{-N+1-\a}dt\\
 %
 %&= |S^{N-2}| \left\{  \min( r^\b    , |s-r2|^\b)+\min(r^{\b} ,1)  |s-r|^\b    \right\}      |\rho^{-1-\a}d\rho\\
 %
  \leq&|S^{N-2}|  \int_{0}^{  |s-r|}    \rho^{\b-\a-1}    d\rho
 +  |S^{N-2}|  |s-r|^\b  \int_{  |s-r|}^{+\infty}         \rho^{-1-\a}d\rho \\
 \leq & c|s-r|^{\b-\a}.
\end{align*}
Using this in \eqref{eq:est-Fi-s1s2},  we then conclude that
\be\label{eq:est-cF1}
 | [\cF_\e; s,r] | \le c(1+ \|{u}\|_{C^{1,\b}(\R^{N-1})} )^{c}  \,  |s-r|^{\beta-\alpha } \prod_{i=1}^k \|{u}_i\|_{C^{1, \b}(\R^{N-1})}.
\ee
Letting $\vert s-r\vert\geq 1$, we have, by \eqref{eq:est-cF1-L-infty}
\begin{eqnarray}\label{eq:est-cF2}
	| [\cF_\e; s,r] | &\leq & 2\parallel\cF_\e\parallel_{L^\infty(\R^{N-1})}\nonumber\\
	& \leq & c(1+ \|{u}\|_{C^{1,\b}(\R^{N-1})} )^{c}\vert s-r\vert^{\b-\a} \prod_{i=1}^k \|{u}_i\|_{C^{1, \b}(\R^{N-1})}.
\end{eqnarray}

\eqref{eq:est-cF1} and \eqref{eq:est-cF2} together with \eqref{eq:est-cF1-L-infty} give (\ref{eq:est-F1}).

To prove (\ref{eq:est-F2}), we just integrate the  inequality in \eqref{eq:estoM-e2} and \eqref{eq:estoM-e1} on $\R^{N-1}$.
 \end{proof}

We are now in position to prove that $H_\e: \cO_\d\to C^{0,\b-\a}(\R^{N-1})$ given by \eqref{eq:Geom-from-NMC-eps} is smooth.
\begin{proposition}
\label{prop:smooth-ovH}
For every $\e\geq 0$ and $\d>0$, the map  $H_\e: \cO_{\d} \subset C^{1,\beta}(\R^{N-1}) \to C^{0, \b-\a}(\R^{N-1})$ defined by (\ref{eq:Geom-from-NMC-eps}) is of class $C^\infty$, and for every  $k\in\N$, we have
 \begin{align}\label{eq:DkH-in-integ}
-\frac{\a}{2} D^k  H_\e({u})  (s) =&    \int_{\R^{N-1}}  D^kM_\e ({u},s,t)  d t- \int_{\R^{N-1}} D^k \ov  M_\e ({u},s,t)  d t ,
\end{align}
where $M_\e$ and $\ov M_\e$ are defined in \eqref{eq:def-M-oM}.  Moreover, there exists a positive constant $c=(\a,\b,\d,N )>1$ such  that  for every $\e\geq0$,
\begin{equation}
  \label{eq:est-Dk-cH}
\|D^k  H_\e({u})\|_{C^{0,\beta-\alpha}(\R^{N-1})} \leq  c(1+ \|{u}\|_{C^{1,\b}(\R^{N-1})} )^{c} .
\end{equation}
\end{proposition}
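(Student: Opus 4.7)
\begin{pf}[Proof sketch]
The plan is to proceed by induction on $k\in \N$, using Lemma~\ref{lem:est-cand-deriv} both to make sense of the candidate derivatives and to pass to the limit under the integral sign. Throughout, I work on a fixed ball in $\cO_\d$; all bounds are uniform in $\e\geq 0$, which will be crucial because the final norm estimate \eqref{eq:est-Dk-cH} must not blow up as $\e\to 0$.

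\emph{Step 1 (base case).} Apply Lemma~\ref{lem:est-M-eps-ovM-eps} with $k=0$ (that is, with no auxiliary directions $u_i$): the integrands defining $-\frac{\a}{2}H_\e(u)$ in \eqref{eq:good-expression2-NMC-more-compact} are controlled respectively by $c(1+\|u\|_{C^{1,\b}})^c \min(|t|^\b,1)|t|^{-N+1-\a}$ and $c(1+\|u\|_{C^{1,\b}})^c \min(|t|^\b,1)(1+|t|^2)^{-(N+\a)/2}$, both of which are integrable on $\R^{N-1}$ (using $\b>\a$ near $t=0$ and $N+\a>N-1+\a$ at infinity). Lemma~\ref{lem:est-cand-deriv} with $k=0$ then gives $H_\e(u)\in C^{0,\b-\a}(\R^{N-1})$ together with the required bound.

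\emph{Step 2 (inductive differentiation under the integral).} Assume that $H_\e$ is $C^k$ on $\cO_\d$ with derivative represented pointwise by \eqref{eq:DkH-in-integ}. To verify that $D^kH_\e$ is itself differentiable in the direction $h\in C^{1,\b}(\R^{N-1})$, form for $|\tau|$ small the difference quotient
\[
\tfrac{1}{\t}\bigl(D^k H_\e(u+\t h)-D^k H_\e(u)\bigr)(s)
\]
and, inside the integrals, write the integrand of $D^kM_\e(u+\t h,s,t)-D^kM_\e(u,s,t)$ as $\t\int_0^1 D^{k+1}M_\e(u+r\t h,s,t)[h,u_1,\dots,u_k]\,dr$ via the fundamental theorem of calculus (and similarly for $\ov M_\e$). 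The integrands in $r$ and $t$ are jointly dominated by the $t$-integrable majorants of Lemma~\ref{lem:est-M-eps-ovM-eps} (with $k+1$ in place of $k$), uniformly in small $\t$ and in $\e$. Fubini and dominated convergence then let me pass $\t\to 0$ inside the integrals, yielding existence of $D^{k+1}H_\e(u)$ together with the explicit formula \eqref{eq:DkH-in-integ} at order $k+1$.

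\emph{Step 3 (continuity of $D^{k+1}H_\e$ and conclusion).} Continuity of $u\mapsto D^{k+1}H_\e(u)$ as a $(k+1)$-linear form follows by the same device: the difference $D^{k+1}H_\e(u_1)-D^{k+1}H_\e(u_2)$ is again written as an integral of $D^{k+2}M_\e$ and $D^{k+2}\ov M_\e$ along the segment joining $u_1$ and $u_2$, which is uniformly controlled by Lemma~\ref{lem:est-M-eps-ovM-eps}. Because this argument works at every order, $H_\e\in C^\infty(\cO_\d,C^{0,\b-\a})$. Finally, the operator norm estimate \eqref{eq:est-Dk-cH} is exactly the content of \eqref{eq:est-F1}--\eqref{eq:est-F2} in Lemma~\ref{lem:est-cand-deriv} applied to $\cF_\e=\int D^kM_\e\,dt$ and $\ov\cF_\e=\int D^k\ov M_\e\,dt$: taking the supremum over $u_1,\dots,u_k$ with unit $C^{1,\b}$-norm gives the bound on $\|D^kH_\e(u)\|$.

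The main technical point—already absorbed by Lemmas~\ref{lem:est-cK-2D}--\ref{lem:est-cand-deriv}—is producing $t$-integrable majorants for $D^k M_\e$ and $D^k\ov M_\e$ that are independent of $\e$, so that the differentiation under the integral sign in Step~2 is legitimate and the resulting bounds survive the $\e\to 0$ limit (needed later to recover $H=H_0$). Once those estimates are in hand, the induction is essentially mechanical.
\end{pf}
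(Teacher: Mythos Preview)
Your proposal is correct and follows essentially the same approach the paper has in mind: the paper's own proof simply says ``with Lemma~\ref{lem:est-cand-deriv} at hand, it suffices to follow precisely the arguments in \cite{CFW}'', and what you have written is exactly that argument---induction on $k$ with differentiation under the integral sign, using the $\e$-uniform majorants of Lemma~\ref{lem:est-M-eps-ovM-eps} to justify the limit and Lemma~\ref{lem:est-cand-deriv} for the $C^{0,\b-\a}$ bounds. One minor point worth making explicit in Step~2 is that the convergence of the difference quotient must hold in the $C^{0,\b-\a}$ topology (not merely pointwise in $s$); this follows because the remainder $\int_0^1\bigl(D^{k+1}M_\e(u+r\t h)-D^{k+1}M_\e(u)\bigr)[h,u_1,\dots,u_k]\,dr$ is itself controlled in $C^{0,\b-\a}$ by the H\"older estimates \eqref{eq:estM-e2}, \eqref{eq:estoM-e2} combined with your Step~3 continuity argument, so G\^ateaux differentiability plus continuity of the derivative yields Fr\'echet differentiability.
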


\begin{proof}
 With Lemma \ref{lem:est-cand-deriv} at hand, it suffices to follows precisely the arguments in \cite{CFW} to get the desired result, we skip the details.
\end{proof}
As a consequence of this result, we have
\begin{corollary}\label{cor:def-H-eps}
Let $k\in \N$, $u\in \cO$,  $u_1,\dots,u_k\in C^{1,\b}(\R^{N-1}) $. Then, for every   $s\in \R^{N-1}$, the map $\e \mapsto D^k H_\e(u)[u_1,\dots, u_k] (s)$ is continuous on $[0,1]$, and  $ D^k H_0(u)[u_1,\dots, u_k] (s)=  D^k H(u)[u_1,\dots, u_k] (s)$.
\end{corollary}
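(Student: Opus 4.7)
\emph{Proof plan for Corollary \ref{cor:def-H-eps}.} The strategy is to apply the dominated convergence theorem to the integral representation \eqref{eq:DkH-in-integ} from Proposition \ref{prop:smooth-ovH}, which expresses $D^k H_\e(u)[u_1,\dots,u_k](s)$ as
\[
-\frac{\alpha}{2}\int_{\R^{N-1}} D^k_u M_\e(u,s,t)[u_1,\dots,u_k]\,dt \;-\; \int_{\R^{N-1}} D^k_u \ov M_\e(u,s,t)[u_1,\dots,u_k]\,dt,
\]
uniformly in $\e \in [0,1]$.

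The first thing to verify is pointwise continuity (in $\e$) of the integrands for each fixed $t \neq 0$. Inspecting the definitions \eqref{eq:defcKvarrho} and \eqref{eq:def-ov-cKvarrho}, for $t \neq 0$ both $Q(u,s,t)+|t|^{-2}\e^2$ and $\ov Q(u,s,t)+\e^2-|t|^2$ are continuous functions of $\e \in [0,1]$ staying in the domain of smoothness of $g_\varrho$ and $\ov g_\varrho$, so the Faà di Bruno expansions \eqref{eq:Dk-K-s_1s_2-0} and \eqref{eq:Dk-oK-s_1s_2-0} show that $D^k_u \cK_{\a,\e}(u,s,t)$ and $D^k_u \ov\cK_{\a,\e}(u,s,t)$ are continuous in $\e$ on $[0,1]$. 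Combining this with \eqref{eq:DkM} and \eqref{eq:DkoM}, the integrands $D^k_u M_\e$ and $D^k_u \ov M_\e$ are continuous in $\e$ for every $t \in \R^{N-1}\setminus\{0\}$.

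Next I would produce an $\e$-independent integrable majorant. Lemma \ref{lem:est-M-eps-ovM-eps} provides uniform bounds with constants independent of $\e\in[0,1]$. The bound \eqref{eq:estM-e1} gives
\[
|D^k_u M_\e(u,s,t)[u_1,\dots,u_k]| \;\le\; C\,\frac{\min(|t|^\b,1)}{|t|^{N-1+\a}},
\]
and passing to polar coordinates $t = \rho\theta$ this dominating function behaves like $\rho^{\b-\a-1}$ near $\rho=0$ (integrable since $\b>\a$) and like $\rho^{-1-\a}$ at infinity (integrable since $\a>0$). Similarly, \eqref{eq:estoM-e1} yields a majorant $C\,\min(|t|^\b,1)(1+|t|^2)^{-(N+\a)/2}$ for $\ov M_\e$, which is obviously in $L^1(\R^{N-1})$. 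Therefore the dominated convergence theorem applies and $\e \mapsto D^k H_\e(u)[u_1,\dots,u_k](s)$ is continuous on $[0,1]$ for every $s\in\R^{N-1}$.

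Finally, the identification $H_0 = H$ follows by direct comparison: setting $\e = 0$ in \eqref{eq:Geom-from-NMC-eps} recovers \eqref{eq:Geom-from-NMC1} verbatim. Since Proposition \ref{prop:smooth-ovH} asserts smoothness of each $H_\e$ on $\cO_\d$ and, in particular, of $H_0$, the equality of maps $H_0 = H$ on $\cO$ implies equality of all Fréchet derivatives, which applied to $[u_1,\dots,u_k]$ and evaluated at $s$ yields the claimed pointwise identity. The only minor obstacle is the integrability of the dominant near $t=0$, which is exactly what the hypothesis $\b > \a$ is designed to accommodate; no further difficulty arises.
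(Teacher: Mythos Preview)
Your proposal is correct and follows essentially the same route as the paper: invoke the integral representation \eqref{eq:DkH-in-integ}, use the $\e$-uniform bounds of Lemma~\ref{lem:est-M-eps-ovM-eps} as an integrable majorant, and apply the dominated convergence theorem; the identification $D^kH_0=D^kH$ then follows from $H_0=H$ as maps on $\cO$. (One cosmetic point: in your displayed formula the factor $-\tfrac{\alpha}{2}$ should multiply the full left-hand side $D^kH_\e$, not just the first integral, as in \eqref{eq:DkH-in-integ}.)
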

  \begin{proof}
The continuity of the map $\e\mapsto D^k H_\e(u)[u_1,\dots, u_k](\cdot) $ follows from Lemma \ref{lem:est-M-eps-ovM-eps} and the dominated convergence theorem. The last statement is an immediate consequence of \eqref{eq:DkH-in-integ}, Lemma \ref{lem:est-M-eps-ovM-eps} and the dominated convergence theorem.
%{eq:DkM}

  \end{proof}
\subsection{The derivative of the NMC operator}\label{ss:derivatives-of-H}
The aim of this section is to complete the proof of Proposition \ref{lem:diff-H} by computing the derivatives of the NMC operator $H$. To do so, we need to recall the expression of the NMC of a set $E$, in terms of principal integral,
$$
H_{E}(x)=PV\int_{\R^N}\frac{\t_{E_u^c}(y)}{ |x-y|^{ {N+\a} }}\,dy=\lim_{\e\to 0}\int_{|x-y|\geq \e}\frac{\t_{E_u^c}(y)}{|x-y|^{ {N+\a}}}\,dy,
$$
for every  $x \in  \de E$, where as before
$$
\tau_{E^c}(y):=1_{  E^c}(y) -1_{E}(y),
$$
with    $1_A$ denotes the characteristic function of $A$ and $E^c:=\R^N\setminus E$.  While we used the geometric expression to derive the regularity of the NMC operator \eqref{geometric-0}, we find  more convenient to use PV integral form to compute the full expressions of the linearzied operator about nonconstant functions.
 \begin{lemma}\label{lem:linearized-NMC}
For every $\l>0 $, $u\in \cO$ and
 $v,w\in C^{1,\b}(\R^{N-1})$, we have
\begin{align}\label{eq:DH-lamb-v}
  D H ( u)[v](s)&=2 PV\int_{\R^{N-1}}\frac{v(s)-v(s-t)}{(|t|^2+(u(s)-u(s-t))^2)^{\frac{N+\a}{2}}}\, dt\nonumber\\
 & \quad-2\int_{\R^{N-1}} \frac{v(s)+v(s-t)}{(|t|^2+(u(s)+u(s-t))^2)^{\frac{N+\a}{2}}}\, dt.
\end{align}
\end{lemma}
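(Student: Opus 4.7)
The plan is to compute $DH_\e(u)[v](s)$ explicitly for each $\e>0$ and then pass to the limit $\e\to 0$ using Corollary \ref{cor:def-H-eps}. The starting point is the observation that, for $\e>0$, the geometric form \eqref{eq:NMC-geom-eps} is equivalent to the regularized volume representation
\[
H_\e(u)(s)=\int_{\R^N}\tau_{E_u^c}(y)\,(|x-y|^2+\e^2)^{-(N+\alpha)/2}\,dy,\qquad x=(s,u(s)),
\]
obtained by reversing the divergence identity that produces the geometric expression. Since $\e>0$ the integrand is an ordinary integrable function, so we may differentiate under the integral without invoking a principal value.

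Setting $u_\eta:=u+\eta v$ and $x_\eta:=(s,u_\eta(s))$, I would split
$\tfrac{d}{d\eta}\big|_0 H_\e(u_\eta)(s)=\mathrm{I}+\mathrm{II}$,
where $\mathrm{I}$ records the variation coming from $E_{u_\eta}\triangle E_u$ with $x=(s,u(s))$ fixed, and $\mathrm{II}$ records the variation of $x_\eta$ with $E_u$ fixed. For $\mathrm{I}$, $\partial E_u$ consists of the two graphs $\{y_N=\pm u(\bar s)\}$, and on each one the product of the normal velocity and the surface element equals $v(\bar s)\,d\bar s$; hence the standard domain perturbation identity gives, after the substitution $\bar s=s-t$,
\[
\mathrm{I}=-2\int_{\R^{N-1}}v(s-t)\bigl[A_\e(s,t)+B_\e(s,t)\bigr]dt,
\]
where $A_\e,B_\e$ denote the two kernels in \eqref{eq:DH-lamb-v} with $\e^2$ added in the denominators. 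For $\mathrm{II}$, the identity
\[
(N+\alpha)(u(s)-y_N)(|x-y|^2+\e^2)^{-(N+\alpha+2)/2}=\partial_{y_N}(|x-y|^2+\e^2)^{-(N+\alpha)/2},
\]
together with an integration by parts in $y_N$ against $\partial_{y_N}\tau_{E_u^c}=2(\delta_{y_N=u(\bar s)}-\delta_{y_N=-u(\bar s)})$, yields
\[
\mathrm{II}=2v(s)\int_{\R^{N-1}}\bigl[A_\e(s,t)-B_\e(s,t)\bigr]dt.
\]

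Adding $\mathrm{I}$ and $\mathrm{II}$ and regrouping gives
\[
DH_\e(u)[v](s)=2\!\int(v(s)-v(s-t))A_\e(s,t)\,dt-2\!\int(v(s)+v(s-t))B_\e(s,t)\,dt,
\]
which is \eqref{eq:DH-lamb-v} with $\e$ in place of $0$. Sending $\e\to 0$ then finishes the proof: the left-hand side converges pointwise to $DH(u)[v](s)$ by Corollary \ref{cor:def-H-eps}, and in the second integral on the right the kernel $B_\e$ is dominated uniformly in $\e$ by a multiple of $(1+|t|^2)^{-(N+\alpha)/2}$, so dominated convergence applies directly. The main obstacle is the first integral, whose kernel is singular at $t=0$; there I would use the expansion $v(s)-v(s-t)=\nabla v(s)\cdot t+O(|t|^{1+\beta})$, observe that the $\nabla v(s)\cdot t$ part is killed by the odd symmetry $t\mapsto -t$ in the principal value, and apply dominated convergence to the absolutely integrable remainder to identify the limit with the principal value integral appearing in \eqref{eq:DH-lamb-v}.
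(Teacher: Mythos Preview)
Your overall strategy matches the paper's: regularize, compute the linearization explicitly for $\e>0$, then pass to the limit via Corollary~\ref{cor:def-H-eps}. Your computation of $\mathrm{I}+\mathrm{II}$ is organized differently from the paper (which uses the change of variables $y=(\bar s, z\,u(\bar s))$ and an integration by parts in $z$, see \eqref{eq:Upsi-prim}), but both routes give the same $\e$-regularized version of \eqref{eq:DH-lamb-v}.

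There is, however, a genuine gap in your starting identity. The geometric expression \eqref{eq:NMC-geom-eps} is \emph{not} equal to the regularized volume integral $h_{\e,\a}(u)(s):=\int_{\R^N}\tau_{E_u^c}(y)\,(|x-y|^2+\e^2)^{-(N+\a)/2}\,dy$ when $\e>0$. The divergence identity you are ``reversing'' reads
\[
\nabla_y\cdot\frac{x-y}{(|x-y|^2+\e^2)^{(N+\a)/2}}=\frac{\a}{(|x-y|^2+\e^2)^{(N+\a)/2}}-\frac{(N+\a)\,\e^2}{(|x-y|^2+\e^2)^{(N+2+\a)/2}},
\]
and the second term does not vanish; the correct relation is $H_\e(u)=h_{\e,\a}(u)-\frac{(N+\a)\e^2}{\a}\,h_{\e,\a+2}(u)$ (this is \eqref{eq:link-H-geomH-eps}). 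Consequently what you compute with $\mathrm{I}+\mathrm{II}$ is $Dh_{\e,\a}(u)[v]$, not $DH_\e(u)[v]$, and Corollary~\ref{cor:def-H-eps} only controls the latter. To close the argument you must show in addition that $\frac{(N+\a)\e^2}{\a}\,Dh_{\e,\a+2}(u)[v](s)\to 0$; this is easy (the derivative stays bounded because the kernel has no singularity for $\varrho=\a+2$), but it cannot be skipped. The paper handles this correction differently: it observes that $DH_\e(u)[v](s)=B_{s,\a}(\e)+\frac{\e}{\a}\frac{d}{d\e}B_{s,\a}(\e)$ with $B_{s,\a}(\e)=Dh_{\e,\a}(u)[v](s)$, integrates this ODE-type relation on $(0,\bar\e)$, and extracts $\lim_{\e\to 0}DH_\e(u)[v](s)=\lim_{\e\to 0}B_{s,\a}(\e)$ from the continuity of $B_{s,\a}$ at $0$.

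A smaller imprecision in your limit step: the kernel $A_\e(s,t)$ is not even in $t$ (since $u(s)-u(s-t)\neq u(s)-u(s+t)$ in general), so the linear term $\nabla v(s)\cdot t$ is not literally ``killed by odd symmetry.'' What makes the symmetrization work is the estimate $A_\e(s,t)-A_\e(s,-t)=|t|^{-(N+\a)}\bigl(\cK_{\a,\e}(u,s,t)-\cK_{\a,\e}(u,s,-t)\bigr)=O\!\bigl(\min(|t|^\b,1)\,|t|^{-(N+\a)}\bigr)$ uniformly in $\e$, by \eqref{eq:est-K-odd}; after symmetrizing, the linear part becomes an absolutely convergent integral to which dominated convergence applies. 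The paper makes exactly this splitting into even and odd parts of $\Gamma_v(s,t)=v(s)-v(s-t)$ (see \eqref{eq:Gam-even-od}).
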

\begin{proof}
We consider $H_\e$ defined by \eqref{eq:Geom-from-NMC-eps}.  Then
recalling \eqref{eq:NMC-geom-eps},   we have
$$
 H_\e(u)(s) = - \frac{2}{\a}\int_{\partial E_u} \frac{(x-y)\cdot \nu_{E_u}(y)}{(|x-y|^2+\e^2)^{(N+\alpha)/2}} \,dy \quad \textrm{ for $x=(s,u(s)  )=F_u(s,1)$}.
$$
For $\varrho,\e>0$, we consider the map   $ {h}_{\e,\varrho}: \cO\to L^\infty(\R^{N-1})$   given by
$$
 {h}_{\e,\varrho}(u)(s) =\int_{\R^N}\frac{\t_{E_u^c}(y)}{( |x-y|^2+\e^2 )^{\frac{N+\varrho}{2}}}\,dy=\int_{\R^N}\frac{\t_{E_u^c}(y)}{( |(s,u(s)  )-y|^2+\e^2 )^{\frac{N+\varrho}{2}}}\,dy,
$$
with  $x=(s,u(s)  )=F_u(s,1)$.
For $\e>0$, we have that
$$
\nabla_y\cdot \frac{x-y}{(|x-y|^2+\e^2)^{(N+\alpha)/2}} = \frac{\alpha}{(|x-y|^2+\e^2)^{(N+\alpha)/2}} -\frac{(N+\alpha)\e^2}{(|x-y|^2+\e^2)^{(N+2+\alpha)/2}} .
$$
Multiplying this equality by $\t_{E_1^c}$,
integrating  on $\R^N$ and using the divergence theorem, we get
\be\label{eq:link-H-geomH-eps}
  H_\e(u)(s) = {h}_{\e,\a}(u)(s)- \frac{(N+\a)\e^2}{\a}   {h}_{\e,\a+2}(u)(s).
\ee
   By the change of variables $y=  u(\bar{s}) z$ and $\bar s=s-t$, we obtain
\begin{align*}
 {h}_{\e,\varrho}(u)(s)
&=  \int_{\R^{N-1}} \int_{\R}\frac{\t_{E_1^c}(\bar{s},z)}{(|s-\bar{s}|^2+ (\u(s) -u(\bar{s})z)^2 +\e^2)^{\frac{N+\varrho}{2}}}u(\bar{s})\,dz d\bar{s}\\
&=  \int_{\R^{N-1}} \int_{\R}\frac{\t_{(-1,1)^c}( z)}{(|t|^2+ (\u(s) -u(s-t)z)^2 +\e^2)^{\frac{N+\varrho}{2}}}u(s-t)\,dz d t.
\end{align*}
It is clear that  the map $ {h}_{\e,\varrho}: \cO\to C^{0,\b-\a}(\R^{N-1})$ is differentiable for every $\e,\varrho>0$.
 Now letting $V_\varsigma=u+\varsigma v$, we have
\begin{align}
D {h}_{\e,\varrho}(u)[v](s)=\frac{d {h}_{\e,\varrho} (V_\varsigma)}{d\varsigma}\Big|_{\varsigma=0}(s)&=  \int_{\R^{N-1}}v(\bar{s}) \int_{\R }\t_{(-1,1)^c}( z)  \Upsilon_\e(z) \, dz   d t \nonumber\\
&-  \int_{\R^{N-1}}    \int_{\R }\t_{(-1,1)^c}( z) \left\{ v(s)- v(s-t) z  \right\} \Upsilon_\e'(z)    dz d t,  \label{eq:Upsi-prim}
\end{align}
where
$$
\Upsilon_\e(z)=\frac{1}{(|t|^2+ (\u(s) -u(s-t) z)^2 +\e^2)^{\frac{N+\varrho}{2}}}.
$$
%which satisfies
%$$
%\Upsilon'(z)=( 3+\a)\frac{u(\bar{s}) (u(\bar{s}) z-\u(s))}{((s-\bar{s})^2+ (u(\bar{s}) z-\u(s))^2 +\e^2)^{\frac{5+\varrho}{2}}}.
%$$
Using integration by parts in \eqref{eq:Upsi-prim},    we deduce that
\begin{align*}
D {h}_{\e,\varrho}(u)[v](s)&= 2  \int_{\R^{N-1}}    \Bigl(   \left\{- v(s-t) -v(s)  \right\}  \Upsilon_\e(-1)   -  \left\{ v(s-t) -v(s)  \right\}  \Upsilon_\e(1) \Bigr) dt.
%
%&=   2  \int_{\R^{N-1}} \frac{v(s)-v(s-t) }{(|t|^2+ (\u(s) -u(s-t) )^2 +\e^2)^{\frac{N+\varrho}{2}}}   d t\
%M&\quad- 2  \int_{\R^{N-1}} \frac{v(s)+v(s-t) }{(|t|^2+ (\u(s) +u(s-t) )^2 +\e^2)^{\frac{N+\varrho}{2}}}   d t
\end{align*}
We then   conclude that
\begin{align}
D {h}_{\e,\varrho}(u)[v](s)&= 2  \int_{\R^{N-1}} \frac{v(s)-v(s-t) }{(|t|^2+ (\u(s) -u(s-t) )^2 +\e^2)^{\frac{N+\varrho}{2}}}   d t\nonumber\\
&\quad- 2  \int_{\R^{N-1}} \frac{v(s)+v(s-t) }{(|t|^2+ (\u(s) +u(s-t) )^2 +\e^2)^{\frac{N+\varrho}{2}}}  d t .
\label{eq:D-cH-eps-varrho}
\end{align}
We define
$$
\G_ v(s,t) = v(s)-v(s-t),\qquad \ov \G_ v(s,t) = v(s)+v(s-t) \qquad   \text{for $v \in  C^{1,\beta}(\R^{N-1})$, $\, s,t \in \R^{N-1}$}.
$$
Recalling \eqref{eq:defcKvarrho},  \eqref{eq:def-ov-cKvarrho} and \eqref{eq:def-Lamb-3}, we have
\begin{align}
D {h}_{\e,\varrho}(u)[v](s)&= 2  \int_{\R^{N-1}}|t|^{-N-\varrho} \G_v(s,t) \cK_{\varrho,\e}(u,s,t)  d t - 2  \int_{\R^{N-1}}  \ov \G_ v(s,t) \ov \cK_{\varrho,\e}(u,s,t) d t .
\label{eq:D-cH-eps-varrho}
\end{align}

We then deduce  from \eqref {eq:link-H-geomH-eps} and \eqref{eq:D-cH-eps-varrho} that, for every $\e>0$,
 \begin{align}
    & D {H}_{\e}( u)[v] (s)  =D{h}_{\e,\a}(u)[v](s)- \frac{(N+\a)\e^2}{\a}  D {h}_{\e,\a+2}(u)[v](s)\nonumber\\
    &
=     2  \int_{\R^{N-1}}|t|^{-N-\a} \G_v(s,t) \cK_{\a,\e}(u,s,t)  d t - 2  \int_{\R^{N-1}}  \ov \G_ v(s,t) \ov \cK_{\a,\e}(u,s,t) d t  \label{eq:Dh-eps-!}  \\
 &- \frac{(N+\a)\e^2}{\a}\left( 2  \int_{\R^{N-1}}|t|^{-N+2-\a} \G_v(s,t) \cK_{\a+2,\e}(u,s,t)  d t -  2 \int_{\R^{N-1}}  \ov \G_ v(s,t) \ov \cK_{\a+2,\e}(u,s,t) d t\right) .\nonumber
\end{align}
For $\e\in(0,1]$ and $s$ fixed,  we let
 \be\label{eq:def-B_s-eps-h}
B_{s,\a}(\e):= 2\int_{\R^{N-1}}|t|^{-N-\a} \G_v(s,t) \cK_{\a,\e}(u,s,t)  d t -  2 \int_{\R^{N-1}}  \ov \G_ v(s,t) \ov \cK_{\a,\e}(u,s,t) d t .
\ee
We claim that $B_{s,\a}$ is  smooth on $(0,1]$ and extends continuously at  $\e=0$, for every fixed $s$. Indeed, it is clear that $\e\mapsto  \int_{\R^{N-1}}  \ov \G_ v(s,t)  \ov \cK_{\a,\e}(u,s,t) d t$  is smooth on $[0,1]$ by the dominated convergence theorem, since $u\in\cO$. Next, we write
\begin{align}
2 \int_{\R^{N-1}}|t|^{-N-\a} \G_v(s,t) \cK_{\a,\e}(u,s,t)  d t&= \int_{\R^{N-1}}|t|^{-N-\a} (\G_v(s,t) +\G_v(s,-t) )\cK_{\a,\e}(u,s,t)  d t \nonumber\\
 & + \int_{\R^{N-1}}|t|^{-N-\a}( \G_v(s,t)-\G_v(s,-t) )\cK_{\a,\e}(u,s,t)  d t. \label{eq:Gam-even-od}
\end{align}
We then  observe that
\begin{align}
|\G_v(s,t) +\G_v(s,-t) |& =|v(s+t)-2v(s)+v(s-t)|\nonumber\\
&=|\int_{0}^1\left(\n v(s+\rho t)\cdot t- \n v(s-\rho t)\cdot t \right) d\rho| \nonumber\\
&\leq 2 \|v\|_{C^{1,\b}(\R^{N-1}) }\min (|t|,|t|^{1+\b}).
\end{align}
Recalling that $\b>\a$, and since $\cK_{\a,\e}(u,s,t)\leq 1$, it then  follows from the dominated convergence theorem  that, for every fixed $s\in \R^{N-1}$, the function $$\e\mapsto  \int_{\R^{N-1}}|t|^{-N-\a} (\G_v(s,t) +\G_v(s,-t) )\cK_{\a,\e}(u,s,t)  d t $$ is  smooth on $(0,1]$    and has a finite limit at $\e\to0$. Now, we have
\begin{align*}
 \int_{\R^{N-1}}&|t|^{-N-\a}( \G_v(s,t)-\G_v(s,-t) )\cK_{\a,\e}(u,s,t)  d t\\
 &\qquad\quad=\frac{1}{2} \int_{\R^{N-1}}|t|^{-N-\a}( \G_v(s,t)-\G_v(s,-t) )(\cK_{\a,\e}(u,s,t)-\cK_{\a,\e}(u,s,-t) )  d t.
\end{align*}
Since  $| \G_v(s,t)-\G_v(s,-t) |\leq \min(|t|,1) $, by \eqref{eq:est-K-odd}, we have
$$
|(   \G_v(s,t)-\G_v(s,-t)  )(\cK_{\a,\e}(u,s,t)-\cK_{\a,\e}(u,s,-t))|\leq c \min (|t|^{\b+1},|t|).
$$
Therefore the function    $\e \mapsto  \int_{\R^{N-1}}|t|^{-N-\a}( \G_v(s,t)-\G_v(s,-t) )\cK_{\a,\e}(u,s,t)  d t$ is smooth on $(0,1]$    and also has a finite limit as $\e\to0$, by the dominated convergence theorem.  The claim is thus proved.

For $s\in \R^{N-1}$ fixed and $\e\in (0,1]$, we now have,  see \eqref{eq:Dh-eps-!} and \eqref{eq:def-B_s-eps-h},
 \begin{align*}
    D {H_\e}( u)[v ](s)
% & = B_{s,\a}(\e)   - \frac{N+\a}{\a} B_{s,\a+2}(\e)   \\
 %
 &=B_{s,\a}(\e)  + \frac{\e}{\a}  \frac{d}{d\e}  B_{s,\a}(\e)  \\
 %
% &=\frac{\a-1}{\a}(h_{\e,\a}(s) - \ov{h}_{\e,\a}(s  ) )+ \frac{1}{\a}  \frac{d}{d\e}\left( \e(h_{\e,\a}(s) - \ov{h}_{\e,\a}(s  ))\right )\\
 %
 &= \frac{\a-1}{\a} B_{s,\a}(\e) + \frac{1}{\a}  \frac{d}{d\e}\left( \e B_{s,\a}(\e)\right ) .
\end{align*}
Integrating this from $0$ to $\ov{\e}\in (0,1]$,
 we  find that
 \begin{align}\label{eq:int-Dh-eps}
 \int_0^{\ov{\e}}   D {H_\e}( u)[v ](s) d\e & =  \frac{\a-1}{\a}\int_0^{\ov{\e}} B_{s,\a}(\e)\, d\e+ \frac{1}{\a} \left(\ov{\e}B_{s,\a}(\ov{\e})-\lim_{\e\to 0} \e B_{s,\a}(\e)\right).
\end{align}
From the continuity of $B_{s,\a}$  on $[0,1]$,  we deduce that
$$
   \int_0^{\ov{\e}}   D {H_\e}( u)[v](s) d\e  =  \frac{\a-1}{\a}\int_0^{\ov{\e}} B_{s,\a}(\e) d\e+ \frac{1}{\a} \ov{\e}B_{s,\a}(\ov{\e}).
$$
Dividing this equality by $\ov{\e}$ and letting $\ov{\e}\to 0$, we then have
$$
   \lim_{\ov{\e}\to 0}   D {H_{\ov{\e}}}( u)[v]  (s)   =  \frac{\a-1}{\a}\lim_{\ov{\e}\to 0}  B_{s,\a}(\ov{\e}) + \frac{1}{\a} \lim_{\ov{\e}\to 0}  B_{s,\a}(\ov{\e})= \lim_{\ov{\e}\to 0} B_{s,\a}(\ov{\e}),
$$
where we have used the  continuity of $\ov \e\mapsto  D H_{\ov{\e}}( u)[v]  (s)$  on $[0,1]$, by Corollary \ref{cor:def-H-eps}. Since $ D H_{\ov{\e}}( u)[v](s) \to  D H ( u)[v](s) $ as $\ov{\e}\to 0 $  by  Corollary \ref{cor:def-H-eps}, we thus have
\begin{align*}
&  D {H }( u)[v] (s)   = \lim_{\ov{\e}\to 0}  B_{s,\a}(\ov{\e})\\
&  =  2\lim_{\ov{\e}\to 0}\int_{\R^{N-1}}|t|^{-N-\a} \G_v(s,t) \cK_{\a,\ov{\e}}(u,s,t)  d t -  2 \int_{\R^{N-1}}  \ov \G_ v(s,t) \ov  \cK_{\a,0}(u,s,t) d t\\
  &=2 PV\int_{\R^{N-1}}\frac{v(s)-v(s-t)}{(|t|^2+(u(s)-u(s-t))^2)^{\frac{N+\a}{2}}}\, dt-2\int_{\R^{N-1}} \frac{v(s)+v(s-t)}{(|t|^2+(u(s)+u(s-t))^2)^{\frac{N+\a}{2}}}\, dt.
\end{align*}
This completes the proof.
\end{proof}

\end{document}